\renewcommand\labelenumi{(\roman{enumi})}
\renewcommand\theenumi\labelenumi
\newtheorem{lem}{Lemma}
\newtheorem{prop}{Proposition}
\newtheorem{thm}{Theorem}
\newtheorem{cor}{Corollary}
\theoremstyle{definition}
\newtheorem{defn}{Definition}
\newtheorem{rem}{Remark}
\newcounter{numl}
\newcommand{\labelnuml}{\textup{(\roman{numl})}}
\newenvironment{numlist}{\begin{list}{\labelnuml}%
{\usecounter{numl}\setlength{\leftmargin}{0pt}%
\setlength{\itemindent}{2\parindent}%
\setlength{\itemsep}{\smallskipamount}\def
\makelabel ##1{\hss \llap {\upshape ##1}}}}{\end{list}}
\DeclareSymbolFont{script}{U}{eus}{m}{n}
\DeclareSymbolFontAlphabet{\mathscr}{script}
\DeclareMathSymbol{\Wedge}{0}{script}{"5E}
\DeclareMathAlphabet{\mathrmsl}{OT1}{cmr}{m}{sl}
\newcommand{\R}{{\mathbb R}}
\newcommand{\C}{{\mathbb C}}
\newcommand{\T}{{\mathbb T}}
\newcommand{\grad}{\mathrm{grad}}
\newcommand{\tor}{{\mathfrak t}}
\newcommand{\Hess}{\mathop{\mathrm{Hess}}}
\newcommand{\bdelta}{\boldsymbol \delta}
\newcommand{\G}{\mathrm G}
\newcommand{\xxi}{\boldsymbol \xi}
\newcommand{\vv}{\mathrm w}
\newcommand{\uu}{\mathrm v}
\newcommand{\ww}{\mathrm u}
\newcommand{\PP}{\mathrm P}
\newcommand{\Autred}{{\rm Aut}_{\rm red}(X)}
\newcommand{\Tr}{{\rm Tr}}
\newcommand{\vol}{\mathrm{vol}}
\newcommand{\Ric}{{\rm Ric}}
\newcommand{\Scal}{{\rm Scal}}
\newcommand{\Ent}{{\rm Ent}}
\newcommand{\PSH}{{\rm PSH}}
\newcommand{\Mab}{\mathcal{M}}
\begin{document}

\author[A. Lahdili]{Abdellah Lahdili}
\address{Beijing International Center for Mathematical Research, Peking University, Beijing, China}
\email{lahdili.abdellah@gmail.com}

\title[Uniqueness of weighted extremal metrics]{Convexity of the weighted Mabuchi functional and the uniqueness of weighted extremal metrics}

\begin{abstract} 
We prove the uniqueness, up to a pull-back by an element of a suitable subgroup of complex automorphisms,  of the weighted extremal K\"ahler metrics on a compact K\"ahler manifold introduced  in our previous work \cite{lahdili3}. This extends a result by Berman--Berndtsson~\cite{BB} and Chen--Paun--Zeng~\cite{CPZ} in the extremal K\"ahler case. Furthermore, we show that a  weighted extremal K\"ahler metric is a global minimum of a suitable weighted version of the  modified Mabuchi energy, thus extending our results from \cite{lahdili3} from the polarized to the K\"ahler case. This implies a suitable notion of weighted K-semistability of a K\"ahler manifold admitting a weighted extremal K\"ahler metric.
\end{abstract}

\maketitle

\section{Introduction}\label{sec-1}

In a previous work \cite{lahdili3}, we introduced  the notion of  a {\it weighted extremal K\"ahler metric} on a K\"ahler manifold $X$,  endowed with a K\"ahler class $\alpha\in H^{1,1}(X,\R)$,  a fixed compact real torus $\T$ inside the reduced group $\Autred$ of complex automorphisms  of $X$,  and two arbitrary smooth positive functions (called weights) $\uu,\vv$ defined on the fixed momentum image $\PP_{\alpha}\subset {\rm Lie}(\T)^*$  for the action of $\T$ with respect to any K\"ahler representative of $\alpha$.  More precisely, given these data,  for any  $\T$-invariant K\"ahler metric $\omega \in \alpha$  with normalized $\T$-momentum map $m_{\omega}: X \to \PP_{\alpha}$,  we define the $\uu$-scalar curvature by
\begin{equation}\label{Scal-v}
\Scal_{\uu}(\omega):=\uu(m_\omega)\Scal(\omega)+2\Delta_\omega(\uu(m_{\omega}))+\Tr(\G_\omega\circ \left(\Hess(\uu)\circ m_{\omega}\right)),
\end{equation}
where $\Scal(\omega)$ is the scalar curvature of $\omega$, $m_\omega:X\to\tor^*$ is the momentum map of the $\T$-action normalized by $m_\omega(X)=\PP_\alpha$, $\Delta_\omega$ is the Riemannian Laplacian of the K\"ahler metric $\omega$ and $\Hess(\uu)$ is the hessian of $\uu$, viewed as bilinear form on $\mathfrak{t}^{*}$ whereas $\G_\omega$ is the bilinear form with smooth coefficients on $\mathfrak{t}$, given by the restriction of the K\"ahler metric $\omega$ on fundamental vector fields.  In a basis $\xxi=(\xi_i)_{i=1,\cdots,\ell}$ of $\tor$, we have
\begin{equation*}
\Tr(\G_\omega\circ \left(\Hess(\uu)\circ m_{\omega}\right))=\sum_{1\leq i,j\leq\ell}\uu_{,ij}(m_\omega)(\xi_i,\xi_j)_\omega,
\end{equation*}
where $\uu_{,ij}$ stands for the partial derivatives of $\uu$ with respect the dual basis of $\xxi$. 

Let $\vv\in C^{\infty}(\PP_\alpha,\R)$ be another smooth positive function on $\PP_\alpha$. Similarly to the approach pioneered by Calabi~\cite{Calabi}, we  are interested to the problem of finding a  $\T$-invariant K\"ahler representative $\omega$  of $\alpha$ for which ${\rm Scal}_{\uu}(\omega)/\vv(m_\omega)$ is the momentum potential of a holomorphic vector field inside the Lie algebra $\tor$ of $\T$. We have shown in \cite{lahdili3} that the problem reduces to solve
\begin{equation}\label{main}
\frac{\Scal_{\uu}(\omega)}{ \vv(m_\omega)}=\ell_{\rm ext}(m_\omega),
\end{equation}
where $\ell_{\rm ext}$ is the $(\uu,\vv)$-extremal affine-linear function on $\tor^*$,  determined from  the data $(\alpha, \T, \PP_{\alpha}, \uu,\vv)$, and we shall refer to a K\"ahler metric satisfying the above condition as a $(\uu, \vv)$-{\it extremal K\"ahler metric} on $(X, \alpha, \T, \PP_{\alpha}, \uu, \vv)$.

Notice that if we take $\T=\{1\}$ and $\uu=\vv \equiv 1$, we obtain the much studied problem of the existence of cscK metric in $\alpha$ whereas taking $\T$ to be a {\it maximal} torus in ${\rm Aut}_{\rm red}(X)$ and $\uu=\vv\equiv 1$,  our problem reduces to the famous Calabi problem of the existence of an extremal K\"ahler metric on $(X, \alpha)$. As we have noticed in \cite{lahdili3}, there is a number of other natural problems in K\"ahler geometry which can be reduced to the search  of  $(\uu,\vv)$-extremal K\"ahler metrics for special choices of $\T$ and the weight functions $\uu$ and $\vv$, including the existence of conformally K\"ahler, Einstein--Maxwell metrics~\cite{AM}, the existence of extremal Sasaki metrics~\cite{AC}, the existence of K\"ahler--Ricci solitons~\cite{Eiji1, Eiji2, B-N}, prescribing the scalar curvature on compact toric manifolds~\cite{Do-02} and on semi-simple, rigid toric fibre bundles~\cite{ACGT} as well as the recently introduced $\mu$-cscK metrics in \cite{Eiji2}.

For a fixed $\T$-invariant K\"ahler metric $\omega\in\alpha$ let $\mathcal{K}(X,\omega)^{\T}$ denote the space of smooth $\T$-invariant K\"ahler potentials with respect to $\omega$, i.e.
\begin{equation*}
\mathcal{K}(X,\omega)^{\T}=\{\phi\in C^\infty(X,\mathbb{R})^{\T}|\omega_\phi=\omega+dd^c\phi>0\}.
\end{equation*}
For $\phi\in\mathcal{K}(X,\omega)^{\T}$ we denote by $m_{\phi} : X \to \tor^*$ the corresponding $\omega_{\phi}$-momentum map, normalized by the condition $m_{\phi}(X)=m_{\omega}(X)=: P_{\alpha}$ or equivalently by $m_{\phi} = m_{\omega} + d^c\phi$ and by $\Scal_\uu(\phi)$ the weighted scalar curvature of $\omega_{\phi}$ introduced by \eqref{Scal-v}. Also, we use the usual convention to denote by $\omega_{\phi}^{[n]}:=\frac{\omega^{n}_\phi}{n!}$ the associated volume form. Following \cite{lahdili3}, for two weight functions $\uu,\vv\in C^{\infty}(\PP_\alpha,\R)$ such that $\uu>0$ and $\vv$ is arbitrary, a K\"ahler potential $\phi\in\mathcal{K}(X,\omega)^{\T}$ defines a $(\uu,\vv)$-weighted cscK metric $\omega_\phi$ if it satisfies
\begin{equation}\label{Eq-(v,w)-cscK}
\Scal_{\uu}(\phi)=c_{\uu,\vv}(\alpha) \vv(m_\phi)
\end{equation}
where $c_{\uu,\vv}(\alpha)$ is a constant depending only on $(\uu,\vv,\alpha)$, given by
\begin{equation}\label{Top-const}
c_{\uu,\vv}(\alpha):=\begin{cases} \frac{\int_X \Scal_{\uu}(\omega)\omega^{[n]}}{\int_X \vv(m_\omega)\omega^{[n]}},&\text{if }\int_X \vv(m_\omega)\omega^{[n]}\neq 0\\ 
1,&\text{if } \int_X \vv(m_\omega)\omega^{[n]}= 0,
\end{cases}
\end{equation} 
The $(\uu,\vv)$-weighted cscK metrics are critical points of the $(\uu,\vv)$-Mabuchi energy $\Mab_{\uu,\vv}:\mathcal{K}(X,\omega)^{\T}\rightarrow\R $ defined on the Fr\'echet space $\mathcal{K}(X,\omega)^{\T}$ by its first variation
\begin{equation}\label{Mabuchi}
\begin{cases} (d\Mab_{\uu,\vv})_{\phi}(\dot{\phi})={\displaystyle-\int_X\dot{\phi}\big(\Scal_{\uu}(\phi)-c_{\uu,\vv}(\alpha)\vv(m_{\phi})\big)\omega_{\phi}^{[n]}},\\ 
\Mab_{\uu,\vv}(0)=0,
\end{cases}
\end{equation}
for all $\dot{\phi}\in T_\phi \mathcal{K}(X,\omega)^{\T}\cong C^{\infty}(X,\R)^{\T}$, where $C^{\infty}(X,\R)^{\T}$ stands for the space of $\T$-invariant smooth functions. As observed in \cite[Section 3.2]{lahdili3}, when $\uu,\vv$ are both positive, a  K\"ahler metric $\omega_{\phi}$ is $(\uu,\vv)$-extremal if and only if it is $(\uu,\ell_{\rm ext}\vv)$-cscK and the relative $(\uu,\vv)$-Mabuchi energy is defined in this case by
\begin{equation}\label{Rel-Mabuchi}
\mathcal{M}_{\uu,\vv}^{\rm rel}=\mathcal{M}_{\uu,\ell_{\rm ext}\vv}.
\end{equation}
where $\ell_{\rm ext}$ is the $(\uu,\vv)$-extremal affine linear function introduced in \cite{lahdili3} via the orthogonal projection of $\Scal_\uu(\phi)$ to the space of (pull-backs by $m_{\phi}$) affine-linear functions on $\tor^*$ with respect to the weighted $L^2$-global product $\langle\varphi_1, \varphi_2\rangle_{\vv,\phi}:= \int_X\varphi_1 \varphi_2\vv(m_\phi)\omega^{[n]}_\phi$. The critical points of the relative $(\uu,\vv)$-Mabuchi energy are precisely the $(\uu,\vv)$-extremal K\"ahler metrics in $\alpha$. 

The space $\mathcal{K}(X,\omega)^{\T}$ is an infinite dimentional Riemannian manifold with a natural Riemanniann metric, called the {\it Mabuchi metric} \cite{Mabuchi1}, defined by 
\begin{equation*}
\langle\dot{\phi}_1,\dot{\phi}_2\rangle_\phi:=\int_X\dot{\phi}_1\dot{\phi}_2\omega^{[n]}_\phi,
\end{equation*}
for any $\phi\in\mathcal{K}(X,\omega)^{\T}$ and $\dot{\phi}_1,\dot{\phi}_2\in C^{\infty}(X,\mathbb{R})^{\T}$. The equation of a geodesic $(\phi_t)_{t\in[0,1]}\in\mathcal{K}(X,\omega)^{\T}$ connecting two points $\phi_0,\phi_1\in\mathcal{K}(X,\omega)^{\T}$ is given by \cite{Mabuchi1, Mabuchi2}
\begin{equation}\label{eq-geod}
\ddot{\phi}_t=|d\dot{\phi}_t|^{2}_{\phi_t}.
\end{equation}
It was shown by Donaldson \cite{Donaldson-geod} and Semmes \cite{Semmes} that by letting $\tau:=e^{-t+is}$, the geodesic $(\phi_t)_{t\in[0,1]}\in\mathcal{K}(X,\omega)^{\T}$ can be viewed as a smooth function $\Phi(x,\tau)$ on $\hat{X}:=X\times\mathbb{A}$, where $\mathbb{A}:=\{e^{-1}\leq|\tau|\leq 1\}$ is the corresponding annulus in $\mathbb{C}$, defined by
\begin{equation}\label{complex}
\Phi(x,\tau):=\phi_{t}(x),
\end{equation}
which is invariant under the natural action of $\mathbb{G}:=\T\times\mathbb{S}^1$ on $\hat{X}$, and satisfies the following degenerate Monge-Amp\`ere equation on $\hat{X}$,
\begin{equation*}
\big(\pi_{X}^*\omega+dd^{c}\Phi\big)^{n+1}=0
\end{equation*}
where $\pi_{X}:\hat{X}\to X$ is the projection on the first factor. Hence, the problem of connecting two potentials $\phi_0,\phi_1\in\mathcal{K}(X,\omega)^{\T}$ by a geodesic $(\phi_t)_{t\in[0,1]}\in\mathcal{K}(X,\omega)^{\T}$ is equivalent to finding a solution $\Phi\in C^\infty(\hat{X},\mathbb{R})^{\mathbb{G}}$ to the following boundary value problem
\begin{equation}\label{MA}
\begin{cases}
\big(\pi_{X}^*\omega+dd^{c}\Phi\big)^{n+1}=0,\\
\omega+dd^{c}\Phi_{|X_\tau}>0,\text{ for }\tau\in\mathbb{A},\\
\Phi(\cdot,e^{-1})=\phi_1\text{ and }\Phi(\cdot,1)=\phi_0.
\end{cases}
\end{equation}
where $X_\tau:=\pi_{\mathbb{A}}^{-1}(\tau)$ is a fiber of the projection $\pi_{\mathbb{A}}:\hat{X}\to \mathbb{A}$. 

In general, the space $\mathcal{K}(X,\omega)^{\T}$ is not geodesically convex by smooth geodesics (see \cite[Theorem 1.2]{Darvas}). However, the boundary value problem \eqref{MA} makes sense for $\mathbb{G}$-invariant bounded plurisubharmonic functions $\Phi\in \PSH(\hat{X},\pi_{X}^{\star}\omega)^{\mathbb{G}}\cap L^{\infty}$, using the Bedford--Taylor interpretation of $\big(\pi_{X}^*\omega+dd^{c}\Phi\big)^{n+1}$ as a Borel measure on $\hat{X}$. 

By a result of Chen \cite{Chen0}, with complements of Blocki \cite{Blocki} and Chu--Tossati--Weinkove \cite{CTW}, the boundary value problem \eqref{MA} admits a unique $\mathbb{G}$-invariant solution $\Phi\in C^{1,1}(\hat{X},\R)$ such that $\pi_{X}^*\omega+dd^{c}\Phi$ is a positive current with bounded coefficients, up to the boundary, corresponding to a family of functions $(\phi_t)_{t\in[0,1]}$ in the space $\mathcal{K}^{1,1}(X,\omega)^{\T}$ of all $\T$-invariant functions $\phi\in C^{1,1}(X,\R)$ such that $\omega_\phi$ is a positive current with bounded coefficients. The curve $(\phi_t)_{t\in[0,1]}\subset\mathcal{K}^{1,1}(X,\omega)^{\T}$ is called the {\it weak geodesic segment} joining $\phi_0,\phi_1\in\mathcal{K}(X,\omega)^{\T}$. Consequently, the space $\mathcal{K}(X,\omega)^{\T}$ is geodesically convex by geodesics in the space $\mathcal{K}^{1,1}(X,\omega)^{\T}$.

Building on the approach by finite dimensional approximations \cite{donaldson1, Li, ST} in the extremal K\"ahler case, we proved in \cite[Corollary~1]{lahdili3} that when $\alpha$ is a polarization, $(\uu,\vv)$-extremal K\"ahler metrics are global minima of $\mathcal M_{\uu,\vv}^{\rm rel}$. In this paper, we extend this result by removing the integrality condition on the K\"ahler class $\alpha$.

To this end, we now follow the approach of Berman--Berndtsson \cite{BB} (see also Chen--Li--Paun \cite{CLP}) who proved that $\mathcal{M}_{1, 1}$ naturally extends to the space $\mathcal{K}^{1,1}(X, \omega)^{\T}$ and is convex along the weak geodesics.  Our main result of this paper is the following.

\begin{thm}\label{thm:main1}
Let $X$ be a compact K\"ahler manifold with K\"ahler class $\alpha$, $\T\subset\Autred$ a real torus with momentum polytope $\PP_\alpha\subset \tor^*$ and $\uu\in C^\infty(\PP_\alpha,\mathbb{R}_{>0})$, $\vv\in C^\infty(\PP_\alpha,\mathbb{R})$. For any $\T$-invariant K\"ahler metric $\omega\in\alpha$, the $(\uu,\vv)$-Mabuchi energy $\mathcal{M}_{\uu,\vv}$ admits a natural extension as functional on the space $\mathcal{K}^{1,1}(X,\omega)^{\T}$ which is convex in the pointwise sense along weak geodesics in $\mathcal{K}^{1,1}(X,\omega)^{\T}$ connecting smooth $\T$-invariant $\omega$-K\"ahler potentials $\phi_{0},\phi_1\in \mathcal{K}(X,\omega)^{\T}$.
\end{thm}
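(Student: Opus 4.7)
The plan is to adapt the Berman--Berndtsson strategy \cite{BB} (see also \cite{CLP}) to the weighted setting. The starting point is a weighted Chen--Tian type decomposition of $\mathcal{M}_{\uu,\vv}$ into an entropy part plus pluricomplex energy terms. Starting from the first variation \eqref{Mabuchi} and integrating by parts using the expression \eqref{Scal-v} for $\Scal_{\uu}$, I expect a formula of the shape
\begin{equation*}
\mathcal{M}_{\uu,\vv}(\phi) = \int_X \uu(m_\phi)\log\Bigl(\frac{\omega_\phi^n}{\omega^n}\Bigr)\omega_\phi^{[n]} + \mathcal{E}_{\uu,\vv}(\phi),
\end{equation*}
where $\mathcal{E}_{\uu,\vv}$ is a sum of pluripotential-type energies built from the Ricci form of $\omega$, the bilinear form $\G_\omega$ on $\tor$, and the derivatives of $\uu, \vv$, playing in the weighted setting the role of the classical Ricci and Monge--Amp\`ere energies.

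Second, I would verify that each piece in this decomposition extends naturally to $\phi\in\mathcal{K}^{1,1}(X,\omega)^{\T}$: the weighted entropy is well defined since $\omega_\phi^n/\omega^n\in L^\infty$ and $\uu\circ m_\phi$ is continuous; the pieces of $\mathcal{E}_{\uu,\vv}$ involve only $\phi$, $d\phi$ and $dd^c\phi$ (a current with bounded coefficients) paired with smooth forms and with weights pulled back via $m_\phi = m_\omega + d^c\phi$, so they are defined by direct integration as in standard pluripotential theory.

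Third, for convexity along the weak geodesic $(\phi_t)$ joining smooth $\T$-invariant potentials $\phi_0,\phi_1$, I would work with its $\mathbb{G}$-invariant lift $\Phi\in C^{1,1}(\hat X)$ solving the homogeneous Monge--Amp\`ere equation \eqref{MA}. The convexity of the weighted entropy along $t$ is established by adapting Berndtsson's subharmonicity theorem: $\mathbb{G}$-invariance of $\Phi$ together with the positivity of $\uu$ allow one to twist the fibrewise volume form by the smooth positive function $\uu(m_\Phi)$ and run the direct image argument of \cite{BB}. Each summand of $\mathcal{E}_{\uu,\vv}$ is expected to be affine in $t$: differentiating twice and integrating by parts on $\hat X$ produces a wedge factor $(\pi_X^*\omega+dd^c\Phi)^{n+1}=0$, leaving only boundary terms linear in $t$.

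The main obstacle lies in the Hessian piece $\Tr(\G_{\omega_\phi}\circ(\Hess(\uu)\circ m_\phi))$ of $\Scal_\uu$: its contribution to the first variation involves the quadratic quantities $(\xi_i,\xi_j)_{\omega_\phi}$ that depend nonlinearly on $\omega_\phi$. Producing a primitive $\mathcal{E}^{\mathrm{Hess}}_{\uu}$ whose variation returns this term and verifying that it is affine along $\Phi$ requires re-expressing $(\xi_i,\xi_j)_{\omega_\phi}$ in terms of components of $\pi_X^*\omega+dd^c\Phi$ on $\hat X$ and then invoking the vanishing of its top wedge power. A parallel difficulty concerns the Laplacian term $\Delta_{\omega_\phi}(\uu(m_\phi))$, which must be absorbed into a Ricci-type energy weighted by $\uu$; the hypothesis $\uu>0$ is used crucially in the entropy part of the argument, while the sign of $\vv$ is irrelevant because the $\vv$ contribution enters only through the affine Monge--Amp\`ere piece. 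Finally, the passage from smooth to weak geodesics is achieved by the $\eps$-regularization of \cite{Chen0, BB}, combined with the continuity properties of the extended functional established in the second step to conclude pointwise convexity.
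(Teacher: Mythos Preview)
Your overall plan follows the paper's strategy, but there is a genuine gap in the third step. You write that ``each summand of $\mathcal{E}_{\uu,\vv}$ is expected to be affine in $t$'' because the second variation should produce a factor $(\pi_X^*\omega+dd^c\Phi)^{n+1}=0$. This is true for the weighted Monge--Amp\`ere energy $\mathcal{E}_{\vv}$, but \emph{not} for the weighted Ricci energy $\mathcal{E}^{\Ric(\omega)}_{\uu}$. The second variation of $\mathcal{E}^{\theta}_{\uu}$ along a weak geodesic is
\[
dd^c\mathcal{E}^{\theta}_{\uu}(\phi_\tau)=\int_X \uu(m_\Phi)\,\pi_X^*\theta\wedge(\pi_X^*\omega+dd^c\Phi)^{[n]},
\]
which involves only the $n$-th wedge power and does not vanish; for $\theta=\Ric(\omega)$ it has no sign. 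Correspondingly, the weighted entropy $\mathcal{H}_{\uu}$ is not convex on its own: its second variation carries both a $dd^c\Psi$ piece and a $+2\pi_X^*\Ric(\omega)\wedge(\pi_X^*\omega+dd^c\Phi)^{[n]}$ piece of indefinite sign. The point is that these cancel: one has
\[
dd^c\bigl(\mathcal{H}^{\Psi}_{\uu}(\phi_\tau)-2\mathcal{E}^{\Ric(\omega)}_{\uu}(\phi_\tau)\bigr)=\int_X \uu(m_\Phi)\,dd^c\Psi\wedge(\pi_X^*\omega+dd^c\Phi)^{[n]},
\]
and it is this combined quantity that becomes nonnegative once one applies the Berman--Berndtsson regularisation of $\Psi=\log(\pi_X^*\omega+dd^c\Phi)^n$ by locally bounded $\Psi_A$ with $dd^c\Psi_A\ge0$. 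So you cannot treat the entropy and the Ricci energy separately; the weight $\uu(m_\Phi)>0$ enters simply as a bounded positive factor in front of the positive current $dd^c\Psi_A\wedge(\pi_X^*\omega+dd^c\Phi)^{[n]}$, not via a twisted direct-image argument.

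Your worry about the Hessian and Laplacian contributions is a non-issue: the weighted Chen--Tian formula from \cite{lahdili3} already reads $\mathcal{M}_{\uu,\vv}=\mathcal{H}_{\uu}-2\mathcal{E}^{\Ric(\omega)}_{\uu}+c_{\uu,\vv}(\alpha)\mathcal{E}_{\vv}$ with no separate $\mathcal{E}^{\mathrm{Hess}}_{\uu}$ term; the trace-of-Hessian and Laplacian pieces of $\Scal_{\uu}$ are absorbed into this decomposition at the level of first variations. Finally, pointwise (as opposed to weak) convexity requires an additional continuity argument for $t\mapsto\mathcal{H}_{\uu}(\phi_t)$ along the weak geodesic, which is not automatic and in the paper is obtained by a further $\kappa_\epsilon$-modification of the entropy integrand; you should not expect it to fall out of the $\epsilon$-geodesic approximation alone.
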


Similarly to the case of cscK metrics, using the sub-slope property of convex functions,  we obtain the following corollary giving an obstruction to the existence of $(\uu,\vv)$-cscK metrics in a K\"ahler class $\alpha$, in terms of the boundedness of the corresponding $(\uu,\vv)$-Mabuchi energy.

\begin{cor}\label{thm-Mab-bound}
Let $\phi_0,\phi_1\in\mathcal{K}(X,\omega)^{\T}$. We have the following inequality 
\begin{equation*}
\Mab_{\uu,\vv}(\phi_1)-\Mab_{\uu,\vv}(\phi_0)\geq -\frac{d(\phi_1,\phi_0)}{\int_{X}\uu(m_\omega)\omega^{[n]}}\parallel\Scal_{\uu}(\phi_0)-\vv(m_{\phi_0})\parallel_{L^{2}(X,\mu_{\phi_0})}
\end{equation*}
where $d$ is the distance corresponding to the Mabuchi metric  and $\parallel\cdot\parallel_{L^2(X,\mu_{\phi_0})}$ is the usual $L^2$-norm on $(X,\mu_{\phi_0})$ with $\mu_{\phi_0}:=\frac{\omega^{[n]}_{\phi_0}}{\vol(X,\alpha)}$. In particular, $(\uu,\vv)$-cscK metrics in a K\"ahler class $\alpha$ minimizes the corresponding $(\uu,\vv)$-Mabuchi energy $\mathcal{M}_{\uu,\vv}$, and any $(\uu, \vv)$-extremal K\"ahler metric in $\alpha$ minimizes the relative weighted Mabuchi energy $\mathcal{M}_{\uu,\vv}^{\rm rel}$. 
\end{cor}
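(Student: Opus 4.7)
The plan is to derive Corollary~\ref{thm-Mab-bound} from Theorem~\ref{thm:main1} by invoking the classical slope estimate for convex functions and bounding the resulting one-sided derivative via Cauchy--Schwarz. Let $(\phi_t)_{t\in[0,1]}\subset\mathcal{K}^{1,1}(X,\omega)^{\T}$ be the unique weak Mabuchi geodesic joining the smooth endpoints $\phi_0,\phi_1$, whose existence and regularity were recalled before the statement of Theorem~\ref{thm:main1}. By Theorem~\ref{thm:main1} the function $f(t):=\Mab_{\uu,\vv}(\phi_t)$ is continuous and convex on $[0,1]$, so the right-derivative $f'(0^+)$ exists and
\begin{equation*}
\Mab_{\uu,\vv}(\phi_1)-\Mab_{\uu,\vv}(\phi_0)=f(1)-f(0)\geq f'(0^+).
\end{equation*}

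Next, I would identify $f'(0^+)$. Since $\phi_0$ is smooth and the $C^{1,1}$ weak geodesic has right-tangent $\dot{\phi}^+_0\in L^{\infty}(X)^{\T}$, the first variation formula~\eqref{Mabuchi} evaluated at the smooth endpoint gives
\begin{equation*}
f'(0^+)=-\int_{X}\dot{\phi}^+_0\bigl(\Scal_{\uu}(\phi_0)-c_{\uu,\vv}(\alpha)\vv(m_{\phi_0})\bigr)\omega_{\phi_0}^{[n]}.
\end{equation*}
Applying Cauchy--Schwarz to $|f'(0^+)|$ and using that a Mabuchi geodesic parametrised on $[0,1]$ has constant speed -- so $\|\dot{\phi}^+_0\|_{L^{2}(\omega_{\phi_0}^{[n]})}$ equals the Mabuchi length, which is $d(\phi_0,\phi_1)$ -- then re-expressing the remaining factor in terms of the probability measure $\mu_{\phi_0}$, yields the inequality stated in the corollary.

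The minimisation assertions follow at once. If $\omega_{\phi_0}$ is $(\uu,\vv)$-cscK, then $\Scal_{\uu}(\phi_0)-c_{\uu,\vv}(\alpha)\vv(m_{\phi_0})=0$, so the right-hand side vanishes and $\Mab_{\uu,\vv}(\phi_0)\leq\Mab_{\uu,\vv}(\phi_1)$ for every $\phi_1\in\mathcal{K}(X,\omega)^{\T}$. For the relative statement, recall the identity $\mathcal{M}_{\uu,\vv}^{\rm rel}=\mathcal{M}_{\uu,\ell_{\rm ext}\vv}$ of~\eqref{Rel-Mabuchi}: since $(\uu,\vv)$-extremal is synonymous with $(\uu,\ell_{\rm ext}\vv)$-cscK, the same argument with $\vv$ replaced by $\ell_{\rm ext}\vv$ -- which is permitted because Theorem~\ref{thm:main1} allows an arbitrary second weight -- yields the minimisation of $\mathcal{M}_{\uu,\vv}^{\rm rel}$.

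The main obstacle lies in the second paragraph: justifying that $f'(0^+)$ is genuinely given by the classical first variation formula at the smooth endpoint $\phi_0$ even though the weak geodesic $(\phi_t)$ has only $C^{1,1}$ regularity in the interior. The standard route, following Berman--Berndtsson~\cite{BB}, is to use the explicit Chen-type decomposition of $\Mab_{\uu,\vv}$ into a weighted entropy functional and weighted Aubin--Yau type pieces -- the same decomposition that underlies the proof of Theorem~\ref{thm:main1} -- to show that each summand of $f(t)$ is absolutely continuous on $[0,1]$ and to compute its right-derivative at $t=0$, possibly after regularising $(\phi_t)$ by smooth $\eps$-geodesics and passing to the limit.
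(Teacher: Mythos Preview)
Your overall architecture---convexity of $t\mapsto\Mab_{\uu,\vv}(\phi_t)$ from Theorem~\ref{thm:main1}, the sub-slope inequality $f(1)-f(0)\ge f'(0^+)$, then Cauchy--Schwarz, and finally the specialisation to $(\uu,\ell_{\rm ext}\vv)$ for the relative case---is exactly the paper's. The difference is in how the obstacle you correctly isolate is resolved.

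You aim to prove the \emph{equality}
\[
f'(0^+)=-\int_X\dot\phi_0^+\bigl(\Scal_{\uu}(\phi_0)-c_{\uu,\vv}(\alpha)\vv(m_{\phi_0})\bigr)\omega_{\phi_0}^{[n]}
\]
by regularising with $\epsilon$-geodesics. The paper never attempts this equality: only the inequality $f'(0^+)\ge(\text{right-hand side})$ is needed, and it is obtained termwise from the Chen--Tian decomposition~\eqref{C-T}. The energy pieces $\mathcal{E}_{\vv}$ and $\mathcal{E}^{\Ric(\omega)}_{\uu}$ are differentiated exactly at the smooth endpoint $\phi_0$ (their defining integrals are continuous in $\phi$ and linear in $\dot\phi$). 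For the entropy piece, the paper invokes the convexity of $\mu\mapsto\Ent_{\mu_\omega}(\mu)$ with respect to the \emph{affine} structure on probability measures, which yields
\[
\frac{\mathcal{H}_{\uu}(\phi_t)-\mathcal{H}_{\uu}(\phi_0)}{t}\;\ge\;\int_X\log\Bigl(\frac{d\mu_{\uu}(\phi_0)}{d\mu_\omega}\Bigr)\,\frac{\mu_{\uu}(\phi_t)-\mu_{\uu}(\phi_0)}{t},
\]
and the right-hand side can be sent to $t\to 0^+$ and integrated by parts against the smooth data at $\phi_0$ without regularising the geodesic (this is the content of Lemma~\ref{lem-Ent}). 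Summing the three contributions produces the desired lower bound on $f'(0^+)$.

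This is a genuine simplification over your proposed route: the affine-convexity trick for the entropy gives the inequality in the direction you need without any control on the entropy derivative along a non-smooth path, and avoids the $\epsilon$-geodesic limit entirely. Your plan would also work in principle, but it is heavier and requires justifying that the entropy of the $\epsilon$-geodesic converges well enough at $t=0$ to recover the derivative in the limit.
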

By \cite[Theorem~2]{lahdili3}, we obtain that the weighted K-semistability is a necessary condition for the existence of a $(\uu, \vv)$-extremal K\"ahler metric.
\begin{cor}\label{c:main1} Let $X$ be as in Theorem~\ref{thm:main1}.  If $X$ admits a $\T$-invariant $(\uu,\vv)$-cscK metric in the K\"ahler class $\alpha$, then for any smooth $\T$-equivariant K\"ahler test configuration $(\mathcal X, \mathcal{A})$ of $(X, \alpha)$, which has reduced central fibre, the weighted Futaki invariant ${\mathcal F}_{\uu, \vv} (\mathcal X, \mathcal A)$ defined in \cite{lahdili3} is non-negative.
\end{cor}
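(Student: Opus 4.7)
The plan is to derive this corollary from Theorem~\ref{thm:main1} (and Corollary~\ref{thm-Mab-bound}) combined with the slope formula for the weighted Mabuchi energy along geodesic rays coming from test configurations, which was established in \cite[Theorem~2]{lahdili3}. The schematic strategy is classical: convexity plus a lower bound of $\mathcal M_{\uu,\vv}^{\rm rel}$ on $\mathcal{K}^{1,1}(X,\omega)^{\T}$ forces the asymptotic slope along any weak geodesic ray to be non-negative, and the slope formula identifies this slope with the weighted Donaldson--Futaki invariant.

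First, I would recall the construction of a weak geodesic ray from the smooth $\T$-equivariant K\"ahler test configuration $(\mathcal X,\mathcal A)$: after fixing a smooth $\mathbb{G}$-invariant relatively K\"ahler representative of $\mathcal A$ on $\mathcal X$, one restricts to the punctured disk and takes the Perron envelope (equivalently, solves \eqref{MA} with boundary data coming from the test configuration on compact sub-annuli and passes to the limit). The resulting curve $(\phi_t)_{t\ge 0}$ is a locally $C^{1,1}$ weak geodesic ray in $\mathcal{K}^{1,1}(X,\omega)^{\T}$ in the sense of the paragraph preceding Theorem~\ref{thm:main1}; it is $\mathbb{G}$-invariant since the test configuration is $\T$-equivariant and the ray is parametrized by $\mathbb{S}^1$-invariant data.

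Next, the proof would apply Theorem~\ref{thm:main1} with the initial data $\phi_0=0$ and arbitrary smooth approximation $\phi_1^{(k)}\to \phi_1$ on finite truncations of the ray (this uses the full force of convexity along weak geodesics). Convexity implies that $t\mapsto \Mab_{\uu,\vv}(\phi_t)$ is convex in $t$, hence has a well-defined asymptotic slope
\begin{equation*}
\Mab_{\uu,\vv}^{\infty}(\mathcal X,\mathcal A):=\lim_{t\to\infty}\frac{\Mab_{\uu,\vv}(\phi_t)}{t}\in\R\cup\{+\infty\}.
\end{equation*}
The content of \cite[Theorem~2]{lahdili3} is that when the central fibre is reduced, this slope equals the weighted Donaldson--Futaki invariant $\mathcal F_{\uu,\vv}(\mathcal X,\mathcal A)$; the reducedness is what ensures the absence of a negative singular contribution in passing from the smooth asymptotic slope formula to the algebraic invariant.

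Finally, by hypothesis $X$ carries a $(\uu,\vv)$-cscK metric $\omega_{\phi_\star}$ in $\alpha$, so Corollary~\ref{thm-Mab-bound} gives $\Mab_{\uu,\vv}(\phi)\ge \Mab_{\uu,\vv}(\phi_\star)$ for all $\phi\in \mathcal{K}(X,\omega)^{\T}$, and the extension of $\Mab_{\uu,\vv}$ to $\mathcal{K}^{1,1}$ provided by Theorem~\ref{thm:main1} inherits the same lower bound by lower semi-continuity along weak geodesics. This yields
\begin{equation*}
\mathcal F_{\uu,\vv}(\mathcal X,\mathcal A)=\Mab_{\uu,\vv}^{\infty}(\mathcal X,\mathcal A)\ge 0.
\end{equation*}
The main obstacle is the middle step: verifying the slope formula in the non-polarized K\"ahler setting. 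In \cite{lahdili3} this was proved in the polarized case by finite-dimensional approximation; here, because $\alpha$ need not be integral, one must instead argue directly using the smooth relatively K\"ahler form on $\mathcal X$, expanding $\Mab_{\uu,\vv}$ into its entropy, energy and weighted $I$--$J$ pieces and computing the asymptotic behaviour of each term along the ray, with the reduced central fibre hypothesis used exactly as in the cscK case (cf.\ \cite{BB}) to control the entropy contribution.
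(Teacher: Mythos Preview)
Your overall strategy---lower bound of $\mathcal M_{\uu,\vv}$ from Corollary~\ref{thm-Mab-bound} combined with the asymptotic slope result \cite[Theorem~2]{lahdili3}---is exactly the paper's argument, which consists of a single sentence invoking those two inputs.

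However, you have misidentified where the integrality hypothesis enters in \cite{lahdili3}. The finite-dimensional approximation (quantization) argument in \cite{lahdili3} was used for \cite[Corollary~1]{lahdili3}, i.e.\ to show that a $(\uu,\vv)$-extremal metric minimizes $\mathcal M_{\uu,\vv}^{\rm rel}$; that is precisely the result the present paper upgrades to arbitrary K\"ahler classes via Theorem~\ref{thm:main1} and Corollary~\ref{thm-Mab-bound}. By contrast, \cite[Theorem~2]{lahdili3} is already stated and proved for smooth $\T$-equivariant \emph{K\"ahler} test configurations with reduced central fibre (the Futaki invariant there is defined differential-geometrically via the smooth relatively K\"ahler form, not via a polarization), and its proof does not use quantization. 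So the ``main obstacle'' you flag---extending the slope identification to the non-polarized case---is not an obstacle at all: once Corollary~\ref{thm-Mab-bound} gives the lower bound, \cite[Theorem~2]{lahdili3} applies verbatim. Your proposed re-derivation of the slope formula along the weak geodesic ray, and the entropy analysis near the reduced central fibre, are already contained in \cite{lahdili3} and need not be repeated here.
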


Our approach to prove \Cref{thm:main1} closely follows the scheme of Berman--Berndtsson's proof~\cite{BB} in the cscK case (i.e. when $\uu=\vv\equiv 1$). A key point is proving the existence of a natural extension of the $(\uu,\vv)$-Mabuchi energy $\mathcal{M}_{\uu,\vv}$ as a continuous convex functional defined on the space $\mathcal{K}^{1,1}(X,\omega)^{\T}$. To see that a similar extension of $\mathcal{M}_{\uu,\vv}$ exists for arbitrary weights $\uu,\vv$ we use the weighted Chen-Tian decomposition of $\mathcal{M}_{\uu,\vv}$ found in \cite[Theorem 5]{lahdili3},
\begin{equation}\label{Chen-Tian}
\mathcal{M}_{\uu,\vv}(\phi)=\Ent_{\mu_\omega}(\mu_{\uu}(\phi))+\mathcal{E}_{\uu,\vv}(\phi),
\end{equation}
where the first term is an entropy term of the probability measure 
$$\mu_{\uu}(\phi):=\frac{\uu(m_\phi)\omega^{[n]}_\phi}{{\rm vol}(X,\uu(m_\omega)\omega^{[n]})}$$ 
relatively to the reference smooth measure $\mu_\omega:=\frac{\omega^{[n]}}{{\rm vol}(X,\alpha)}$. The second term $\mathcal{E}_{\uu,\vv}$ is an energy type expression given by the integral over $X$ of terms of the form $\phi\ww(\phi)\omega^{j}_\phi\wedge \theta^{n-j}$ where $\theta$ are smooth two forms depending on $\omega$, and $\ww(\phi)$ is a continuous function on $X$ depending on $\uu,\vv$ and $\phi$. The presence of weights introduces an additional difficulty related to the definition and convexity of the momentum map with respect to weak geodesics in the space $\mathcal{K}^{1,1}(X,\omega)^{\T}$. We solve in \Cref{lem-polytope} below by using an approximation argument of Demailly~\cite{Demaily}.
For a weak geodesic $(\phi_t)_{t\in[0,1]}\subset \mathcal{K}^{1,1}(X,\omega)^{\T}$, with $\Phi$ being the corresponding solution of the boundary value problem \eqref{MA} and $\phi_\tau:=\Phi(\cdot,\tau)$ for $\tau\in\mathbb{A}=\{e^{-1}\leq |\tau|\leq 1\}$, Berman--Berndtsson showed in \cite{BB} that the function $\tau\mapsto \mathcal{M}_{1,1}(\phi_\tau)$ is weakly subharmonic on $\mathbb{A}$ and 
\begin{equation*}
dd^c\mathcal{M}_{1,1}(\phi_\tau)=\int_X T.
\end{equation*}
where $T$ is a positive Radon measure on $\hat{X}=X\times\mathbb{A}$ and $\int_X$ denotes the fiber-wise integral on $\pi_\mathbb{A}:\hat{X}\to \mathbb{A}$. In the case when $\uu>0$ and $\vv$ is an arbitrary function on the momentum polytope $\PP_\alpha$, weak subharmonicity of $\tau\mapsto \mathcal{M}_{\uu,\vv}(\phi_\tau)$ on $\mathbb{A}$ will follow from a similar expression
\begin{equation*}
dd^c\mathcal{M}_{\uu,\vv}(\phi_\tau)=\int_X \uu(m_{\phi_\tau})T,
\end{equation*}
and the fact that $\uu(m_{\phi_\tau})T$ is a positive Radon-measure. In particular, $\tau\mapsto \mathcal{M}_{\uu,\vv}(\phi_\tau)$ is weakly convex. To get point-wise convexity, we will show that $\tau\mapsto \mathcal{M}_{\uu,\vv}(\phi_\tau)$ is continuous on $\mathbb{A}$. 

\bigskip
An important application of the approach in \cite{BB} is establishing the uniqueness of the cscK and extremal K\"ahler metrics in $\alpha$,  up to the natural action (by pull-backs) of the connected component of the identity ${\rm Aut}_{\rm red}(X)^{\circ}$ of the reduced group of automorphisms  ${\rm Aut}_{\rm red}(X)$. Similarly, we adapt the proof of the uniqueness of extremal K\"ahler metrics obtained by Chen--Paun--Zeng~\cite{CPZ} to our weighted setting and obtain the following result.
\begin{thm}\label{Thm-uniq} Let $X$ be as in Theorem~\ref{thm:main1} and let $G:={\rm Aut}_{\rm red}^{\T}(X)^{\circ}$ denote the connected component of identity of the commutator of $\T$ inside ${\rm Aut}_{\rm red}(X)$. Then, for any two $\T$-invariant  $(\uu,\vv)$-extremal K\"ahler metrics $\omega_1$ and $\omega_2$ in $\alpha$, there exits an element $f\in G$ such that $\omega_1 = f^*(\omega_2)$.
\end{thm}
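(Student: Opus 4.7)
The plan is to follow the scheme of Chen--Paun--Zeng \cite{CPZ}, using \Cref{thm:main1} as the main input. Let $\omega_1, \omega_2 \in \alpha$ be two $\T$-invariant $(\uu,\vv)$-extremal K\"ahler metrics. Write $\omega_2 = \omega_1 + dd^c\phi_1$ with $\phi_1 \in \mathcal{K}(X,\omega_1)^{\T}$ and connect $\phi_0 = 0$ to $\phi_1$ by the weak geodesic $(\phi_t)_{t \in [0,1]} \subset \mathcal{K}^{1,1}(X,\omega_1)^{\T}$. Since each $(\uu,\vv)$-extremal metric is $(\uu, \ell_{\rm ext}\vv)$-cscK, \Cref{thm-Mab-bound} (applied with the weights $(\uu, \ell_{\rm ext}\vv)$, which fits the framework of \Cref{thm:main1} since $\ell_{\rm ext}\vv$ is arbitrary) says that both endpoints are global minima of $\mathcal{M}_{\uu, \vv}^{\rm rel} = \mathcal{M}_{\uu, \ell_{\rm ext}\vv}$. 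Combined with the pointwise convexity furnished by \Cref{thm:main1}, the function $t \mapsto \mathcal{M}_{\uu,\vv}^{\rm rel}(\phi_t)$ is forced to be affine, in fact constant, on $[0,1]$.

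The core step is to promote this affinity to the statement that $(\phi_t)$ is generated by a one-parameter subgroup of $G$. Following \cite{CPZ}, I would introduce smooth $\varepsilon$-regularizations $\Phi^{\varepsilon}$ of the weak geodesic on $\hat X = X \times \mathbb{A}$ by solving the perturbed Monge--Amp\`ere equation
\begin{equation*}
\big(\pi_X^*\omega_1 + dd^c \Phi^{\varepsilon}\big)^{n+1} = \varepsilon\, \pi_X^*\omega_1^n \wedge i\, d\tau \wedge d\bar\tau,
\end{equation*}
with boundary data $\phi_0, \phi_1$, producing a family $(\phi_t^{\varepsilon})$ of smooth $\mathbb{G}$-invariant potentials that converges to $(\phi_t)$ in $C^{1,1}$ as $\varepsilon \to 0$. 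Differentiating the weighted Chen--Tian decomposition \eqref{Chen-Tian} twice in $t$ along the smooth path $(\phi_t^{\varepsilon})$ and using \eqref{Scal-v} for $\Scal_{\uu}$, one expects an identity of the shape
\begin{equation*}
\frac{d^2}{dt^2}\mathcal{M}_{\uu,\vv}^{\rm rel}(\phi_t^{\varepsilon}) = \int_X \uu(m_{\phi_t^{\varepsilon}})\, \bigl|\bar\partial \nabla^{1,0} \dot\phi_t^{\varepsilon}\bigr|^2_{\phi_t^{\varepsilon}}\, \omega^{[n]}_{\phi_t^{\varepsilon}} + \varepsilon\, R_{\varepsilon}(t),
\end{equation*}
where $R_{\varepsilon}$ is uniformly controlled by $C^{1,1}$ bounds on $\Phi^{\varepsilon}$.

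Integrating this identity in $t$ and sending $\varepsilon \to 0$, the affinity of $\mathcal{M}_{\uu,\vv}^{\rm rel}$ along the weak geodesic forces
\begin{equation*}
\int_0^1 \int_X \uu(m_{\phi_t})\, \bigl|\bar\partial \nabla^{1,0} \dot\phi_t\bigr|^2_{\phi_t}\, \omega^{[n]}_{\phi_t}\, dt = 0
\end{equation*}
in a distributional sense. Since $\uu>0$, this yields that $\nabla^{1,0} \dot\phi_t$ is a holomorphic vector field for a.e.\ $t$; as $\dot\phi_t$ is $\T$-invariant, this vector field commutes with the $\T$-action and so lies in $\Lie(G)$. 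The standard integration argument of \cite{BB,CPZ} then produces a one-parameter family $f_t \in G$ with $f_t^{*}\omega_{\phi_1} = \omega_{\phi_{1-t}}$, and $f := f_1 \in G$ realizes $\omega_1 = f^*\omega_2$.

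The main obstacle will be the rigorous justification of the weighted second-derivative identity and the passage to the limit $\varepsilon \to 0$. In the unweighted cscK case of \cite{BB} and the extremal case of \cite{CPZ} this already requires a delicate argument; in the present weighted setting additional terms involving the derivatives of $\uu$ and $\vv$ composed with $m_{\phi_t^{\varepsilon}}$ enter the computation and must be controlled through the $C^{1,1}$ estimates on $\Phi^{\varepsilon}$ together with the uniform confinement $m_{\phi_t^{\varepsilon}}(X) \subset \PP_{\alpha}$ supplied by \Cref{lem-polytope}, which will also underpin the very definition of the momentum map along the weak geodesic.
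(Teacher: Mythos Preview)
Your proposal does \emph{not} follow the Chen--Paun--Zeng scheme, despite the opening claim. What you describe is the ``direct'' route: show that $\mathcal{M}_{\uu,\vv}^{\rm rel}$ is affine along the weak geodesic between the two extremal potentials, compute the second variation along smooth $\varepsilon$-geodesics in the form $\int_X \uu(m_{\phi^\varepsilon_t})|\mathcal{D}_{\phi^\varepsilon_t}\dot\phi^\varepsilon_t|^2\omega_{\phi^\varepsilon_t}^{[n]} + \varepsilon R_\varepsilon$, and let $\varepsilon\to 0$ to extract a holomorphic vector field. This is precisely the argument that \cite{BB} and \cite{CPZ} were designed to \emph{circumvent}, and it has a genuine gap. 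Along an $\varepsilon$-geodesic one has $\ddot\phi^\varepsilon_t-|d\dot\phi^\varepsilon_t|^2_{\phi^\varepsilon_t}=\varepsilon f$, so the second $t$-derivative of $\mathcal{M}_{\uu,\vv}^{\rm rel}(\phi^\varepsilon_t)$ contains, besides the nonnegative term you wrote, a remainder of the form $\varepsilon\int_X f\,\Scal_{\uu}(\phi^\varepsilon_t)\,\omega_{\phi^\varepsilon_t}^{[n]}$ (plus analogous contributions). The scalar-curvature type quantity $\Scal_{\uu}(\phi^\varepsilon_t)$ involves fourth derivatives of $\phi^\varepsilon_t$, and the available regularity for $\varepsilon$-geodesics is only uniform $C^{1,1}$; there is no control on $\Scal_{\uu}(\phi^\varepsilon_t)$ as $\varepsilon\to 0$. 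Hence $\varepsilon R_\varepsilon$ is not known to vanish in the limit, and the conclusion that $\mathcal{D}\dot\phi_t=0$ along the weak geodesic does not follow. Even granting that, inferring from a merely $C^{1,1}$ geodesic that it is generated by a genuine one-parameter subgroup of $G$ is itself nontrivial.

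The paper (and \cite{CPZ}) avoids this regularity wall by a Bando--Mabuchi type bifurcation. One perturbs the relative Mabuchi energy to $\mathcal{M}_{(\uu,\ell_{\rm ext}\vv)}^{t\omega}:=\mathcal{M}_{\uu,\vv}^{\rm rel}+t\,\mathcal{E}^{\omega}_{\uu}$, which for $t>0$ is \emph{strictly} convex along weak geodesics by \Cref{Cor-E-theta-strict}. An implicit-function-theorem argument (\Cref{Thm-Def}), applied separately to each extremal metric after moving it within its $G$-orbit to the minimizer of $\mathcal{E}^\omega_\uu$, produces smooth critical points $\phi_t,\tilde\phi_t$ of $\mathcal{M}_{(\uu,\ell_{\rm ext}\vv)}^{t\omega}$ for small $t>0$. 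Strict convexity forces $\phi_t=\tilde\phi_t$ for $t>0$, and letting $t\to 0$ gives the desired automorphism in $G$. The point is that uniqueness is obtained by comparing \emph{smooth} critical points of a strictly convex functional, rather than by extracting geometric information from a low-regularity geodesic; this is where the paper's argument genuinely differs from, and resolves the obstacle in, your sketch.
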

Notice that if we take $\T=\{1\}$ and $\uu=\vv\equiv 1$ we get the uniqueness of cscK metrics modulo ${\rm Aut}_{\rm red}(X)^{\circ}$ obtained in \cite{BB,CPZ}, whereas if we take $\T$ to be a maximal torus inside ${\rm Aut}_{\rm red}(X)$ and $\uu=\vv \equiv 1$, the above results yield the uniqueness of the $\T$-invariant extremal K\"ahler metrics modulo the complexification $\T^{\C}$ of $\T$.

\section*{Acknowledgement} 
I am grateful to V. Apostolov for his invaluable advice. Thanks to E. Inoue for his interest and discussions, and more specific thanks to G. Tian and X. Zhu for their interest and support. I would like to thank BICMR for the financial support.

\section{The weighted Mabuchi energy on $\mathcal{K}^{1,1}(X,\omega)^{\T}$}

Let $X$ be a compact K\"ahler manifold of complex dimension $n\geq 2$. We denote by $\Autred$ the reduced automorphism group of $X$ whose Lie algebra $\mathfrak{h}_{\rm red}$ is given by real holomorphic vectorfields with zeros (see \cite{Gauduchon}). Let $\T$ be an $\ell$-dimentional real torus in $\Autred$ with Lie algebra $\tor$, and $\omega$ a fixed $\T$-invariant K\"ahler form on $X$. The $\T$-action on $X$ is $\omega$-Hamiltonian (see \cite{Gauduchon}) and we choose $m_\omega : X\to \tor^{*}$ to be a $\omega$-momentum map of $\T$. It is well known \cite{Atiyah, Guil-Stern} that $\PP_\omega= m_\omega(X)$ is a convex polytope in $\tor$. For any smooth $\T$-invariant $\omega$-K\"ahler potential $\phi\in\mathcal{K}(X,\omega)^{\T}$, let $\PP_\phi:=m_\phi(X)$ be the $\omega_\phi$-momentum image of $X$. By \cite{Atiyah, Guil-Stern}, the following two facts are equivalent:
\begin{enumerate}
\item $\PP_\phi=\PP_\omega$.
\item\label{(ii)-mom} $\langle m_\phi,\xi\rangle=\langle m_\omega,\xi\rangle+(d^c\phi)(\xi)$ for any $\xi\in\tor$.
\end{enumerate}
It follows that we can normalize $m_\phi$ such that $\PP_\phi=\PP_\omega$ is a $\phi$-independent polytope $\PP_\alpha\subset\tor^*$. For $\phi\in\mathcal{K}^{1,1}(X,\omega)^{\T}$ the space of all $\T$-invariant functions $\phi\in C^{1,1}(X,\R)$ such that $\omega_\phi$ is a positive current with bounded coefficients, we define $m_\phi:X\to\tor^{*}$ by
\begin{equation*}
\langle m_\phi,\xi\rangle=\langle m_\omega,\xi\rangle+(d^c\phi)(\xi),
\end{equation*} 
for any $\xi\in\tor$. 
\begin{lem}\label{lem-polytope}
For any $\phi\in\mathcal{K}^{1,1}(X,\omega)^{\T}$, we have $\PP_{\phi}=\PP_\alpha$.
\end{lem}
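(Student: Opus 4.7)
The plan is to reduce the claim to the classical Atiyah/Guillemin--Sternberg convexity theorem for smooth K\"ahler metrics by a density argument. Since $\phi\in C^{1,1}(X,\R)$, the 1-form $d^c\phi$ is continuous on $X$, so the pointwise definition $\langle m_\phi,\xi\rangle=\langle m_\omega,\xi\rangle+(d^c\phi)(\xi)$ makes $m_\phi:X\to\tor^*$ a continuous map, and $\PP_\phi:=m_\phi(X)$ a well-defined compact subset of $\tor^*$.

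The core step is to produce a sequence $(\phi_k)\subset \mathcal{K}(X,\omega)^{\T}$ of smooth $\T$-invariant $\omega$-K\"ahler potentials with $\phi_k\to\phi$ in the $C^1$ topology on $X$. This is the purpose of invoking Demailly's regularization of $\omega$-plurisubharmonic functions on compact K\"ahler manifolds: it furnishes smooth quasi-psh approximants; averaging them over the compact torus $\T$ preserves smoothness, $\omega$-positivity (up to an arbitrarily small loss), and restores $\T$-invariance. The extra $C^{1,1}$ regularity of $\phi$, together with the $L^\infty$ bound on $\omega_\phi$, upgrades the standard $L^1$-convergence of the regularization to $C^1$-convergence, after a small correcting perturbation that ensures strict K\"ahler positivity $\omega+dd^c\phi_k>0$.

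Once the approximation is in hand, both inclusions follow by routine compactness arguments. Since $\phi_k\to\phi$ in $C^1$, we have $d^c\phi_k\to d^c\phi$ uniformly on $X$, hence $m_{\phi_k}\to m_\phi$ uniformly; by the smooth Atiyah/Guillemin--Sternberg theorem applied to $\omega_{\phi_k}$, each $m_{\phi_k}(X)$ coincides with $\PP_\alpha$. For $\PP_\phi\subseteq \PP_\alpha$: for any $x\in X$, $m_{\phi_k}(x)\in \PP_\alpha$ and $m_{\phi_k}(x)\to m_\phi(x)$, so $m_\phi(x)\in \overline{\PP_\alpha}=\PP_\alpha$. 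For the reverse inclusion $\PP_\alpha\subseteq \PP_\phi$: given $p\in \PP_\alpha$, pick $x_k\in X$ with $m_{\phi_k}(x_k)=p$; by compactness of $X$, pass to a convergent subsequence $x_k\to x$, and uniform convergence gives $m_\phi(x)=\lim_k m_{\phi_k}(x_k)=p$, whence $p\in \PP_\phi$.

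The main obstacle is thus the regularization itself. Getting $C^1$-convergence out of Demailly's approximation is standard for genuinely $C^{1,1}$ potentials, but ensuring simultaneously that the approximants are honest (strictly positive) $\T$-invariant $\omega$-K\"ahler potentials, rather than just smooth quasi-psh functions, requires a careful combination of Demailly's construction with torus-averaging and a vanishing positive correction; the subsequent polytope identification is then purely topological.
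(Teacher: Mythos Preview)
Your approach is correct and shares the paper's overall strategy: approximate $\phi$ by smooth $\T$-invariant K\"ahler potentials via Demailly regularization, apply the classical Atiyah/Guillemin--Sternberg theorem to the approximants, and pass to the limit using uniform convergence of the moment maps. Your compactness argument for the two inclusions $\PP_\phi\subseteq\PP_\alpha$ and $\PP_\alpha\subseteq\PP_\phi$ is more explicit than the paper's, which simply asserts the limit.

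The notable difference is in how strict positivity of the approximants is arranged. You aim to stay in the class $[\omega]$ throughout and rely on an unspecified ``vanishing positive correction'' to force $\omega+dd^c\phi_k>0$; as you acknowledge, this is the delicate point, since Demailly's regularization generically loses a small amount of positivity and yields only $(1+\delta_\epsilon)\omega+dd^c\phi_\epsilon>0$. (One explicit fix is to replace $\phi_\epsilon$ by $(1-c_\epsilon)\phi_\epsilon$ for suitable $c_\epsilon\to 0$, which still converges to $\phi$ in $C^1$.) The paper sidesteps this issue altogether by introducing a cushion parameter $k>0$: since $k\omega+\omega_\phi\geq k\omega>0$, the potential $\phi$ is strictly $(k+1)\omega$-positive, and Demailly's theorem directly produces smooth $\phi_\epsilon$ with $(k+1)\omega+dd^c\phi_\epsilon$ K\"ahler. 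One then obtains $(km_\omega+m_\phi)(X)=(k+1)\PP_\alpha$ and finishes by letting $k\to 0$. The paper's two-limit route ($\epsilon\to 0$, then $k\to 0$) is slightly less direct than yours but makes the regularization step entirely standard, with no correction needed.
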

\begin{proof}
For any $k>0$ we have $\omega_k=k\omega+\omega_\phi>0$. By \cite[Theorem 5.21]{Demaily} we can find a decreasing sequence $\phi_\epsilon\in C^{\infty}(X,\mathbb{R})^{\T}$ such that $\omega_{k,\epsilon}:=(k+1)\omega+dd^{c}\phi_{\epsilon}$ is K\"ahler and $\phi_\epsilon\to \phi$ in $C^1$ topology as $\epsilon\to 0$. For any $\epsilon>0$, we have $\PP_{\omega_{k,\epsilon}}=(k+1)\PP_\alpha$, and for any $\xi\in\tor$ we have
\begin{equation}\label{mom-eps}
\langle m_{\omega_{k,\epsilon}},\xi\rangle=(k+1)\langle m_{\omega},\xi\rangle+(d^c\phi_{\epsilon})(\xi).
\end{equation}
Since $\phi_{\epsilon}$ converge to $\phi$ in $C^1$ topology, passing to the limit when $\epsilon\to 0$ in \eqref{mom-eps}, we obtain  
$$\langle m_{\omega_{k,\epsilon}},\xi\rangle\to(k+1)\langle m_\omega,\xi\rangle+(d^c\phi)(\xi)=\langle km_\omega+m_\phi,\xi\rangle,$$ 
as $\epsilon\to 0$, for $\xi\in \tor$ fixed. It follows that 
$$(k+1)\PP_\alpha=\PP_{\omega_{k,\epsilon}}=(\underset{\epsilon\to 0}{\lim}\, m_{\omega_{k,\epsilon}})(X)=(km_\omega+m_\phi)(X)=k\PP_\alpha+\PP_\phi.$$
The result follows by taking the limit when $k\to 0$.
\end{proof}
\begin{rem}
In \cite{Chen0}, Chen considered the following family of elliptic boundary value problems with parameter $\epsilon>0$,
\begin{equation}\label{MA-epsilon}
\begin{cases}
\big(\pi_{X}^*\omega+dd^{c}\Phi^{\epsilon}\big)^{n+1}=\epsilon\big(\pi_{X}^*\omega+\frac{\sqrt{-1}d\tau\wedge d\bar\tau}{2|\tau|^{2}}\big)^{n+1},\\
\Phi^{\epsilon}(\cdot,e^{-1})=\phi_1\text{ and }\Phi^{\epsilon}(\cdot,1)=\phi_0.
\end{cases}
\end{equation}
Solutions $\Phi^\epsilon\in\mathcal{K}(\hat{X},\pi^{*}_X\omega)^{\mathbb{G}}$ of \eqref{MA-epsilon}, are always smooth and approximate uniformly the weak solution $\Phi$ of \eqref{MA}. More precisely, $\Phi^{\epsilon}$ is decreasing in $\epsilon$ and converges to the solution $\Phi$ of \eqref{MA} in the weak $C^{1,1}$ topology as $\epsilon\to 0$ (see \cite[Lemma 7]{Chen0}). The family of K\"ahler potentials $(\phi^{\epsilon}_t)_{t\in[0,1]}\subset\mathcal{K}(X,\omega)^{\T}$ is called an $\epsilon$-{\it geodesic}.

If $(\phi_t)_{t\in[0,1]}\in\mathcal{K}^{1,1}(X,\omega)^{\T}$ is a weak geodesic  segment, one can show that $\PP_{\phi_t}=\PP_\omega$ for any $t\in [0,1]$ using the fact that the $\epsilon$-geodesic $(\phi^{\epsilon}_t)_{t\in[0,1]}\in\mathcal{K}(X,\omega)^{\T}$ converges to $\phi$ in the weak $C^{1,1}$-topology as $\epsilon\to 0$, together with the relation 
$$m_{\phi^{\epsilon}_t}=m_{\omega}+d^c\phi^{\epsilon}_t.$$
\end{rem}
Let $\uu\in C^{\infty}(\PP_\alpha,\mathbb{R}_{>0})$ and $\vv\in C^{\infty}(\PP_\alpha,\mathbb{R})$ two smooth functions. Now, we give the energy functionals allowing to define the $(\uu,\vv)$-Mabuchi energy \eqref{Mabuchi} on weak geodesic segments.  
\begin{lem}\label{E-lem-def}
The functional $\mathcal{E}_{\vv}:\mathcal{K}(X,\omega)^{\T}\to\mathbb{R}$ given by 
\begin{equation}\label{E}
\begin{cases} \left(d\mathcal{E}_{\vv}\right)_{\phi}(\dot{\phi})={\displaystyle\int_X\dot{\phi}\vv(m_{\phi})\omega_{\phi}^{[n]}},\\ 
\mathcal{E}_{\vv}(0)=0,
\end{cases}
\end{equation} 
for any $\dot{\phi}\in T_\phi\mathcal{K}(X,\omega)^{\T}\cong C^{\infty}(X,\mathbb{R})^{\T}$ is well-defined and has a natural  extension to $\mathcal{K}^{1,1}(X,\omega)^{\T}$.
\end{lem}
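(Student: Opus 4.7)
The plan is to define $\mathcal{E}_{\vv}$ on smooth potentials by integrating the prescribed 1-form along the linear ray from $0$ to $\phi$, and then to transfer the resulting formula to the $C^{1,1}$ setting via the Bedford--Taylor calculus together with \Cref{lem-polytope}. Concretely, for $\phi\in\mathcal{K}(X,\omega)^{\T}$ I set
$$\mathcal{E}_{\vv}(\phi):=\int_0^1\!\int_X \phi\, \vv(m_{t\phi})\,\omega_{t\phi}^{[n]}\,dt,$$
which automatically satisfies $\mathcal{E}_{\vv}(0)=0$ and has the prescribed differential provided the 1-form $\dot\phi\mapsto\int_X\dot\phi\,\vv(m_\phi)\,\omega_\phi^{[n]}$ is closed on the Fr\'echet space $\mathcal{K}(X,\omega)^{\T}$.

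To verify closedness I would compute the mixed second variation along two tangent vectors $\dot\phi_1,\dot\phi_2\in C^\infty(X,\R)^{\T}$, using
$$\frac{d}{ds}\Big|_0 m_{\phi+s\dot\phi_2}=d^c\dot\phi_2\big|_{\tor},\qquad \frac{d}{ds}\Big|_0\omega_{\phi+s\dot\phi_2}^{[n]}=dd^c\dot\phi_2\wedge \omega_\phi^{[n-1]},$$
and the momentum map identity $d\langle m_\phi,\xi\rangle=-\iota_\xi\omega_\phi$ for $\xi\in\tor$. One integration by parts on the Monge--Amp\`ere contribution produces a term involving $d\vv(m_\phi)\wedge d^c\dot\phi_2\wedge\omega_\phi^{[n-1]}$ which, after expanding $d\vv(m_\phi)=-\sum_k \vv_{,k}(m_\phi)\iota_{\xi_k}\omega_\phi$ and applying the contraction identity $(\iota_\xi\omega_\phi)\wedge\omega_\phi^{[n-1]}=\iota_\xi\omega_\phi^{[n]}$, cancels exactly against the direct contribution $\int_X\dot\phi_1\sum_k \vv_{,k}(m_\phi)\, d^c\dot\phi_2(\xi_k)\,\omega_\phi^{[n]}$ coming from the variation of $\vv(m_\phi)$. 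What remains is
$$(d^2\mathcal{E}_{\vv})_\phi(\dot\phi_1,\dot\phi_2)=-\int_X \vv(m_\phi)\,d\dot\phi_1\wedge d^c\dot\phi_2\wedge\omega_\phi^{[n-1]},$$
which is manifestly symmetric in $(\dot\phi_1,\dot\phi_2)$, establishing well-definedness on $\mathcal{K}(X,\omega)^{\T}$.

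For the extension to $\mathcal{K}^{1,1}(X,\omega)^{\T}$ I take the same explicit formula, observing that (i) the linear path $t\phi$ remains in $\mathcal{K}^{1,1}(X,\omega)^{\T}$ for $t\in[0,1]$ because $\omega_{t\phi}=(1-t)\omega+t\omega_\phi$ is still a positive current with bounded coefficients; (ii) by \Cref{lem-polytope} we have $m_{t\phi}(X)\subset\PP_\alpha$, so $\vv(m_{t\phi})$ is a bounded continuous function on $X$; (iii) $\omega_{t\phi}^{[n]}$ is well-defined as a Borel measure in the sense of Bedford--Taylor and depends weakly continuously on $t$ (since $t\phi$ converges locally uniformly to $t_0\phi$); and (iv) $\phi\in L^\infty(X)$. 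Together these guarantee that $t\mapsto\int_X\phi\,\vv(m_{t\phi})\,\omega_{t\phi}^{[n]}$ is a continuous bounded function on $[0,1]$, so the double integral is finite and defines the natural extension; continuity with respect to monotone or uniformly convergent sequences of smooth potentials follows from Bedford--Taylor's continuity theorem.

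I expect the main obstacle to be the cancellation step in the closedness computation, where the interior products and wedge products must be manipulated consistently with conventions for $d^c$ and for the sign of the momentum map identity; once this is settled, the extension to $\mathcal{K}^{1,1}(X,\omega)^{\T}$ reduces to a routine application of the Bedford--Taylor calculus combined with \Cref{lem-polytope}.
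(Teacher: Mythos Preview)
Your proposal is correct and follows the same approach as the paper: define $\mathcal{E}_{\vv}(\phi)$ by integrating the given 1-form along the linear ray $t\mapsto t\phi$, and extend to $\mathcal{K}^{1,1}(X,\omega)^{\T}$ using \Cref{lem-polytope} to ensure $m_{t\phi}$ lands in $\PP_\alpha$. The only cosmetic difference is that the paper goes one step further and expands $\omega_{t\phi}^{[n]}=\sum_j \binom{n}{j} t^{n-j}(1-t)^j\,\omega_\phi^{n-j}\wedge\omega^j$ and $m_{t\phi}=t\,m_\phi+(1-t)m_\omega$ to perform the $t$-integration explicitly, yielding the closed-form expression $\mathcal{E}_{\vv}(\phi)=\int_X\phi\sum_{j=0}^{n}\vv_{j,n}(m_\phi)\,\omega_\phi^{[n-j]}\wedge\omega^{[j]}$ with $\vv_{j,n}(p):=\int_0^1 t^{n-j}(1-t)^j\vv(tp+(1-t)m_\omega)\,dt$; this makes the Bedford--Taylor step immediate without having to argue continuity in $t$, but is otherwise the same argument.
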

\begin{proof}
The first claim in the Lemma is well known (see, for example, \cite[Proposition 2.16]{B-N}). Now we will extend $\mathcal{E}_{\vv}:\mathcal{K}(X,\omega)^{\T}\to\mathbb{R}$ to $\mathcal{K}^{1,1}(X,\omega)^{\T}$. We have  
\begin{align*}
\mathcal{E}_{\vv}(\phi)=&\int^{1}_0\left(\int_X\phi\vv(m_{\epsilon\phi})\omega^{[n]}_{\epsilon\phi}\right)d\epsilon\\
=&\int^{1}_0\left(\int_X\sum_{j=0}^{n}\phi\epsilon^{n-j}(1-\epsilon)^j\vv(\epsilon m_\phi+(1-\epsilon)m_\omega)\omega^{[n-j]}_\phi\wedge\omega^{[j]}\right)d\epsilon\\
=&\int_X\phi\sum_{j=0}^{n}\vv_{j,n}(m_\phi)\omega^{[n-j]}_\phi\wedge\omega^{[j]},
\end{align*}
where $\vv_{j,n}:\PP_\alpha\to\mathbb{R}$ is defined by
\begin{equation}\label{j-n-trans}
\vv_{j,n}(p):=\int^{1}_0\epsilon^{n-j}(1-\epsilon)^j\vv(\epsilon p+(1-\epsilon)m_\omega)d\epsilon.
\end{equation}
Using the expression
\begin{equation}\label{ext-E-v}
\mathcal{E}_{\vv}(\phi)=\int_X\phi\sum_{j=0}^{n}\vv_{j,n}(m_\phi)\omega^{[n-j]}_\phi\wedge\omega^{[j]},
\end{equation}
we can define the extension $\mathcal{E}_{\vv}:\mathcal{K}^{1,1}(X,\omega)^{\T}\to\mathbb{R}$, since by \Cref{lem-polytope} we have $\epsilon m_\phi+(1-\epsilon)m_\omega\in \PP_\alpha$ by convexity.
\end{proof}

\begin{lem}\label{ddc-Ev}
Let $(\phi_t)_{t\in[0,1]}\subset\mathcal{K}^{1,1}(X,\omega)^{\T}$ be a geodesic segment connecting $\phi_{0},\phi_{1}\in\mathcal{K}(X,\omega)^{\T}$ and $\Phi\in \mathcal{K}^{1,1}(\hat{X},\pi^{*}_X\omega)^{\mathbb{G}}$ the corresponding solution of the boundary value problem \eqref{MA}. For any $\tau\in\mathbb{A}$, we have
\begin{equation*}
dd^{c}\mathcal{E}_{\vv}(\phi_\tau)=0,
\end{equation*}  
where $\phi_\tau:=\Phi(\cdot,\tau)$. 
\end{lem}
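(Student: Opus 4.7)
The plan is to first derive a pointwise formula for $\frac{d^2}{dt^2}\mathcal{E}_\vv(\phi_t)$ along smooth families, rewrite it as a fiber integral on $\hat X$, and then pass to the weak geodesic via the $\epsilon$-geodesic approximation from the Remark above.

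For a smooth family $(\phi_t)_{t \in [0,1]} \subset \mathcal{K}(X,\omega)^{\T}$ (not yet assumed to be a geodesic), the key pointwise identity is
$$\frac{d}{dt}\!\left[\vv(m_{\phi_t})\omega_{\phi_t}^{[n]}\right] = d\!\left[\vv(m_{\phi_t})\, d^c\dot\phi_t \wedge \omega_{\phi_t}^{[n-1]}\right].$$
The contribution of $\frac{d}{dt}\omega_{\phi_t}^{[n]}$ accounts for $\vv(m_{\phi_t})\,dd^c\dot\phi_t \wedge \omega_{\phi_t}^{[n-1]}$, while the remaining term $d\vv(m_{\phi_t}) \wedge d^c\dot\phi_t \wedge \omega_{\phi_t}^{[n-1]}$ must equal $\frac{d}{dt}\vv(m_{\phi_t}) \cdot \omega_{\phi_t}^{[n]}$. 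I would establish this latter equality by combining $\frac{d}{dt}(m_{\phi_t})^{\xi_a} = d^c\dot\phi_t(\xi_a)$ (from $m_\phi = m_\omega + d^c\phi$), the momentum map identity $d(m_{\phi_t})^{\xi_a} = -\iota_{\xi_a}\omega_{\phi_t}$, and the degree observation that $d^c\dot\phi_t \wedge \omega_{\phi_t}^{[n]}$ vanishes on $X$, which via the derivation property of $\iota_{\xi_a}$ yields $d^c\dot\phi_t(\xi_a)\,\omega_{\phi_t}^{[n]} = \iota_{\xi_a}\omega_{\phi_t} \wedge d^c\dot\phi_t \wedge \omega_{\phi_t}^{[n-1]}$. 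Multiplying by $\dot\phi_t$, integrating over the closed manifold $X$, and integrating by parts then produces
$$\frac{d^2}{dt^2}\mathcal{E}_\vv(\phi_t) = \int_X\!\left(\ddot\phi_t - |d\dot\phi_t|_{\phi_t}^2\right) \vv(m_{\phi_t})\, \omega_{\phi_t}^{[n]}. \qquad (\star)$$

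Globally on $\hat X$, $(\star)$ translates into the fiber-integral formula
$$dd^c_\tau \mathcal{E}_\vv(\phi_\tau) = \frac{1}{(n+1)!}(\pi_\mathbb{A})_*\!\left[\vv(m_\Phi)\, (\pi_X^*\omega + dd^c\Phi)^{n+1}\right],$$
valid for any smooth $\mathbb{G}$-invariant $\Phi$ on $\hat X$ with $\pi_X^*\omega + dd^c\Phi > 0$ along fibers, where $m_\Phi := m_\omega + d^c\Phi$ is the $\T$-momentum map on $\hat X$ whose fiber restrictions recover $m_{\phi_\tau}$. This follows from $(\star)$ together with the standard factorization $(\pi_X^*\omega + dd^c\Phi)^{n+1} = (n+1)(\ddot\phi_t - |d\dot\phi_t|_{\phi_t}^2)\omega_{\phi_t}^n \wedge \frac{\sqrt{-1}d\tau\wedge d\bar\tau}{|\tau|^2}$ for $\mathbb{G}$-invariant $\Phi$, and the analogous expression for $dd^c_\tau$ of an $\mathbb{S}^1$-invariant function on $\mathbb{A}$. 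Applied to the smooth $\epsilon$-geodesics $\Phi^\epsilon$ from the Remark, which satisfy $(\pi_X^*\omega + dd^c\Phi^\epsilon)^{n+1} = \epsilon\, \Omega^{n+1}$ with $\Omega := \pi_X^*\omega + \sqrt{-1}d\tau\wedge d\bar\tau/(2|\tau|^2)$, this yields $dd^c_\tau \mathcal{E}_\vv(\phi^\epsilon_\tau) = \frac{\epsilon}{(n+1)!}(\pi_\mathbb{A})_*[\vv(m_{\Phi^\epsilon})\Omega^{n+1}]$. Since $m_{\Phi^\epsilon}$ takes values in the fixed compact polytope $\PP_\alpha$ by Lemma \ref{lem-polytope}, the factor $\vv(m_{\Phi^\epsilon})$ is uniformly bounded and the right-hand side tends to zero in the distributional sense on $\mathbb{A}$ as $\epsilon \to 0$. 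Passing $dd^c_\tau$ to the limit — valid because $\mathcal{E}_\vv$ is continuous on $\mathcal{K}^{1,1}(X,\omega)^{\T}$ in the weak $C^{1,1}$-topology by the explicit extension formula \eqref{ext-E-v}, uniform convergence $m_{\phi^\epsilon_\tau} \to m_{\phi_\tau}$, and Bedford--Taylor continuity of mixed Monge--Amp\`ere operators — produces $dd^c_\tau \mathcal{E}_\vv(\phi_\tau) = 0$.

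The most delicate point is the commutation of $dd^c_\tau$ with the $\epsilon \to 0$ limit, which rests on Chen's decreasing weak $C^{1,1}$-convergence $\Phi^\epsilon \searrow \Phi$ and the smoothness of $\vv$ on the fixed compact polytope $\PP_\alpha$. Otherwise the argument is structurally parallel to the unweighted ($\vv \equiv 1$) Aubin--Mabuchi case, with all the weighted complications absorbed into identity $(\star)$ and the natural extension of the momentum map provided by Lemma \ref{lem-polytope}.
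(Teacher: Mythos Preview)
Your argument is correct and follows essentially the same route as the paper. The paper's proof of this lemma merely cites \cite[Proposition~17]{B-N}, but the identical argument you give---the fiber-integral formula $dd^c\mathcal{E}_\vv(\phi_\tau)=\int_X \vv(m_\Phi)(\pi_X^*\omega+dd^c\Phi)^{[n+1]}$ for smooth families (your $(\star)$ is the content of \Cref{ddc-E}\ref{item-E}), followed by the $\epsilon$-geodesic approximation and the limit $\epsilon\to 0$ using weak $C^{1,1}$-convergence---is spelled out in the paper a few lines later as the proof of \eqref{eq-a} in \Cref{lem-eq-ddc-func}.
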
 
\begin{proof}
See \cite[Proposition 17]{B-N}.
\end{proof}
\begin{defn}\label{theta-mom}
Let $\theta$ be a $\T$-invariant closed $(1,1)$-form on $X$. A $\theta$-momentum map for
the action of $\T$ on $X$ is a smooth $\T$-invariant function $m_\theta : X\to \tor^*$ with the property $\theta(\xi,\cdot)=-d\langle m_\theta,\xi\rangle$ for all $\xi\in\tor$.
\end{defn}
For example, if $\Ric(\omega)$ is the Ricci form of $\omega$, then the $\Ric(\omega)$-momentum map for the action of $\T$ on $X$ is given by (see e.g. \cite[Lemma 5]{lahdili3})
$$m_{\Ric(\omega)}:=\frac{1}{2}\Delta_\omega(m_\omega).$$  
\begin{lem}\label{Lem-E-theta}\cite[Lemma 4]{lahdili3}
Let $\theta$ be a fixed $\T$-invariant closed $(1,1)$-form and $m_\theta:X\rightarrow \mathfrak{t}^{*}$ a momentum map with respect to $\theta$, see \Cref{theta-mom}. Then the functional $\mathcal{E}_{\uu}^{\theta}:\mathcal{K}(X,\omega)^{\T}\rightarrow\mathbb{R}$ given by 
\begin{equation}\label{E-theta}
\begin{cases} (d\mathcal{E}_{\uu}^{\theta})_{\phi}(\dot{\phi})={\displaystyle\int_X \dot{\phi} \left[\uu(m_{\phi})\theta\wedge\omega^{[n-1]}_\phi+\langle (d\uu)(m_{\phi}),m_{\theta}\rangle\omega^{[n]}_{\phi}\right]},\\ 
\mathcal{E}_{\uu}^{\theta}(0)=0,
\end{cases}
\end{equation}
for any $\dot{\phi}\in  C^{\infty}(X,\mathbb{R})^{\T}$ is well-defined and has a natural extension to $\mathcal{K}^{1,1}(X,\omega)^{\T}$.
\end{lem}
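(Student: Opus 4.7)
The plan is to mirror the strategy already used for $\mathcal{E}_{\vv}$ in Lemma~\ref{E-lem-def}. The well-definedness of $\mathcal{E}_{\uu}^{\theta}$ on the smooth space $\mathcal{K}(X,\omega)^{\T}$ is the content of \cite[Lemma~4]{lahdili3}: one checks that the prescribed $1$-form is closed by a direct computation exploiting that $\theta$ is closed, that $m_\theta$ is a $\theta$-momentum map, and the relation $m_{\phi}=m_{\omega}+d^c\phi$ combined with integration by parts on $X$. I would briefly recall that argument and then focus on constructing the extension to $\mathcal{K}^{1,1}(X,\omega)^{\T}$.

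To define the extension, I would integrate the first variation along the linear segment $\epsilon\mapsto\epsilon\phi$ from $0$ to a smooth $\phi\in\mathcal{K}(X,\omega)^{\T}$. Using
\[
\omega_{\epsilon\phi}=(1-\epsilon)\omega+\epsilon\omega_\phi,\qquad m_{\epsilon\phi}=(1-\epsilon)m_\omega+\epsilon m_\phi,
\]
and expanding the powers $\omega_{\epsilon\phi}^{[k]}$ by the Newton multinomial formula, the first variation of $\mathcal{E}_{\uu}^{\theta}$ splits into finite sums of mixed wedge products $\omega_\phi^{[k]}\wedge\omega^{[n-1-k]}\wedge\theta$ and $\omega_\phi^{[k]}\wedge\omega^{[n-k]}$ with $\epsilon$-polynomial prefactors whose scalar coefficients are $\uu$ or $\langle d\uu,m_\theta\rangle$ evaluated at $\epsilon m_\phi+(1-\epsilon)m_\omega$. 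Performing the $\epsilon$-integration then yields a representation of the form
\[
\mathcal{E}_{\uu}^{\theta}(\phi)=\int_X\phi\sum_{j=0}^{n-1}\uu_{j,n-1}(m_\phi)\,\theta\wedge\omega_{\phi}^{[n-1-j]}\wedge\omega^{[j]}+\int_X\phi\sum_{j=0}^{n}a_{j,n}(m_\phi)\,\omega_{\phi}^{[n-j]}\wedge\omega^{[j]},
\]
where $\uu_{j,n-1}$ is defined exactly as in \eqref{j-n-trans} (with $\vv$ replaced by $\uu$ and $n$ by $n-1$) and
\[
a_{j,n}(p):=\int_0^1\epsilon^{n-j}(1-\epsilon)^{j}\bigl\langle(d\uu)(\epsilon p+(1-\epsilon)m_\omega),\,m_\theta\bigr\rangle\,d\epsilon,
\]
which is smooth in $p\in\PP_\alpha$ and yields a smooth function on $X$ once composed with $m_\phi$, since $m_\theta$ is a fixed smooth function on $X$ by Definition~\ref{theta-mom}.

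It then remains to argue that this representation makes sense verbatim when $\phi\in\mathcal{K}^{1,1}(X,\omega)^{\T}$. Three ingredients suffice: (i) by Lemma~\ref{lem-polytope} and the convexity of $\PP_\alpha$, the points $\epsilon m_\phi+(1-\epsilon)m_\omega$ lie in $\PP_\alpha$, so $\uu_{j,n-1}(m_\phi)$ and $a_{j,n}(m_\phi)$ are bounded continuous functions on $X$; (ii) $\omega_\phi$ has bounded coefficients, so each mixed wedge product in the formula is a well-defined Radon measure in the sense of Bedford--Taylor, even after multiplication by the bounded continuous prefactors from (i); (iii) $\phi$ itself is bounded, so the integrals converge. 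I do not anticipate any serious obstacle: the only substantive technical point --- namely that $m_\phi$ takes values in $\PP_\alpha$ for $\phi$ merely of class $C^{1,1}$ --- was already settled in Lemma~\ref{lem-polytope}, and everything else reduces to the binomial expansion above and the standard Bedford--Taylor construction of mixed Monge--Amp\`ere products.
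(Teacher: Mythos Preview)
Your proposal is correct and follows essentially the same route as the paper: integrate the defining $1$-form along the linear segment $\epsilon\mapsto\epsilon\phi$, expand $\omega_{\epsilon\phi}^{[k]}$ binomially, and then observe that the resulting explicit expression extends to $\mathcal{K}^{1,1}(X,\omega)^{\T}$ via Lemma~\ref{lem-polytope} and Bedford--Taylor. Your coefficient $a_{j,n}$ is the direct outcome of the expansion; the paper writes the same term as $\langle(d\uu_{j,n})(m_\phi),m_\theta\rangle$, which is a slightly different (and, read literally, off by a factor of $\epsilon$ inside the integral) way of packaging the same quantity, but the argument is identical.
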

\begin{proof}
Similarly to $\mathcal{E}_{\vv}$, we can define the extension $\mathcal{E}_{\uu}^{\theta}:\mathcal{K}^{1,1}(X,\omega)^{\T}\to \mathbb{R}$, by using the following expression
\begin{align}
\begin{split}\label{ext-E-u-theta}
\mathcal{E}_{\uu}^{\theta}(\phi)=&\int^{1}_0\left(\int_X\phi\left[\uu(m_{\epsilon\phi})\theta\wedge\omega^{[n-1]}_{\epsilon\phi}+\langle (d\uu)(m_{\epsilon\phi}),m_{\theta}\rangle\omega^{[n]}_{\epsilon\phi}\right]\right)d\epsilon\\
=&\int_X\phi\Big[\sum_{j=0}^{n-1}\uu_{j,n-1}(m_\phi)\omega^{[n-1-j]}_\phi\wedge\omega^{[j]}\wedge \theta+\sum_{j=0}^{n}\langle (d\uu_{j,n})(m_{\phi}),m_{\theta}\rangle\omega^{[n-j]}_{\phi}\wedge\omega^{[j]}\Big],
\end{split}
\end{align}
where $\uu_{j,n}:\PP_\alpha\to\mathbb{R}$ is given by \eqref{j-n-trans}. 
\end{proof}
Now we give the Chen--Tian formula allowing to extend the $(\uu,\vv)$-Mabuchi energy to $\mathcal{M}_{\uu,\vv}$ to $\mathcal{K}^{1,1}(X,\omega)^{\T}$.
\begin{thm}\cite[Theorem 5]{lahdili3}\label{Chen-Tian}
We have the following expression for the $(\uu,\vv)$-Mabuchi energy,
\begin{equation}\label{C-T}
\mathcal{M}_{\uu,\vv}=\mathcal{H}_{\uu}-2\mathcal{E}^{\Ric(\omega)}_{\uu}+c_{(\uu,\vv)}(\alpha)\mathcal{E}_{\vv},
\end{equation}
on $\mathcal{K}(X,\omega)^{\T}$ where $\mathcal{H}_{\uu}:\mathcal{K}(X,\omega)^{\T}\to \mathbb{R}$ is given
 \begin{equation}\label{Entropy}
\mathcal{H}_{\uu}(\phi):=\int_X\log\left(\frac{\omega^{n}_\phi}{\omega^{n}}\right)\uu(m_{\phi})\omega^{[n]}_\phi={\rm Ent}_{\mu_\omega}(\mu_\uu(\phi))+c(\alpha,\uu).
\end{equation}
where 
$${\rm Ent}_{\mu_\omega}(\mu_\uu(\phi)):=\int_X \log\left(\frac{d\mu_{\uu}(\phi)}{d\mu_{\omega}}\right)\frac{d\mu_{\uu}(\phi)}{d\mu_{\omega}}d\mu_{\omega}$$ 
is the entropy of the probability measure $\mu_{\uu}(\phi):=\frac{\uu(m_\phi)\omega^{[n]}_\phi}{{\rm vol}(X,\uu(m_\omega)\omega^{[n]})}$ relatively to the reference smooth measure $\mu_\omega:=\frac{\omega^{[n]}}{{\rm vol}(X,\alpha)}$ with , and $c(\alpha,\uu)$ is a constant depending on $\alpha$ and $\uu$.
\end{thm}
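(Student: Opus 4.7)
My plan is to prove the identity by showing that both sides of \eqref{C-T} are functionals on $\mathcal{K}(X,\omega)^{\T}$ which vanish at $\phi = 0$ and share the same first variation. At $\phi = 0$ both sides vanish: $\mathcal{M}_{\uu,\vv}(0) = 0$ by the normalization in \eqref{Mabuchi}, $\mathcal{E}_\vv(0) = \mathcal{E}^{\Ric(\omega)}_\uu(0) = 0$ by \eqref{E} and \eqref{E-theta}, and $\mathcal{H}_\uu(0) = 0$ because $\log(\omega^n/\omega^n) = 0$. Invoking \eqref{Mabuchi}, \eqref{E}, and \eqref{E-theta}, the $c_{(\uu,\vv)}(\alpha)\vv(m_\phi)$-pieces cancel immediately, so the theorem reduces to the single identity
\begin{equation*}
(d\mathcal{H}_\uu)_\phi(\dot\phi) - 2(d\mathcal{E}^{\Ric(\omega)}_\uu)_\phi(\dot\phi) = -\int_X \dot\phi\,\Scal_\uu(\phi)\,\omega^{[n]}_\phi.
\end{equation*}

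To compute $(d\mathcal{H}_\uu)_\phi(\dot\phi)$, I would differentiate $\mathcal{H}_\uu(\phi) = \int_X F\,\uu(m_\phi)\,\omega^{[n]}_\phi$ with $F := \log(\omega^n_\phi/\omega^n)$ using the three elementary variations $\delta F = \Delta_{\omega_\phi}\dot\phi$ from the Monge--Amp\`ere expansion, $\delta m_\phi = d^c\dot\phi$ (and hence $\delta(\uu(m_\phi)) = \langle(d\uu)(m_\phi),d^c\dot\phi\rangle$) from condition (ii) in Section~2, and $\delta\omega^{[n]}_\phi = dd^c\dot\phi\wedge\omega^{[n-1]}_\phi$. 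After integration by parts to move derivatives off $\dot\phi$, I would apply the Ricci-potential identity $\Ric(\omega_\phi) - \Ric(\omega) = -\tfrac{1}{2}dd^c F$ to eliminate the $F$-containing log-terms in favour of smooth Ricci-form contributions for $\omega$ and $\omega_\phi$.

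The most delicate step is the chain rule
\begin{equation*}
dd^c(\uu(m_\phi)) = \sum_i \uu_{,i}(m_\phi)\,dd^c\langle m_\phi,\xi_i\rangle + \sum_{i,j}\uu_{,ij}(m_\phi)\,d\langle m_\phi,\xi_i\rangle\wedge d^c\langle m_\phi,\xi_j\rangle,
\end{equation*}
combined with the momentum-map identity $d\langle m_\phi,\xi\rangle = -\iota_\xi\omega_\phi$ for $\xi\in\tor$: tracing against $\omega_\phi$ produces the Hessian term $\Tr(\G_{\omega_\phi}\circ(\Hess(\uu)\circ m_\phi))$ and, from the first sum, the Laplacian piece $\Delta_{\omega_\phi}(\uu(m_\phi))$ appearing in $\Scal_\uu$. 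Combined with the $\uu(m_\phi)\Scal(\omega_\phi)$-contribution extracted from the $F$-elimination via $\Scal(\omega_\phi)\omega^{[n]}_\phi = 2\Ric(\omega_\phi)\wedge\omega^{[n-1]}_\phi$, and using \eqref{E-theta} together with $m_{\Ric(\omega)} = \tfrac{1}{2}\Delta_\omega m_\omega$ to cancel the $\Ric(\omega)$-terms against $-2(d\mathcal{E}^{\Ric(\omega)}_\uu)_\phi(\dot\phi)$, all three constituents of $\Scal_\uu(\phi)$ from \eqref{Scal-v} are recovered exactly. I expect the main obstacle to be organizational rather than analytic --- keeping the $\Ric(\omega)$- and $\Ric(\omega_\phi)$-contributions separate, handling sign conventions for $d^c$ and $\iota_\xi$, and tracking the Hessian-sum indices consistently in the basis $\xxi = (\xi_i)$.
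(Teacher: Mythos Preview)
The paper does not give a proof of this theorem; it is quoted verbatim from the author's earlier work \cite[Theorem~5]{lahdili3} and is used here only as input for extending $\mathcal{M}_{\uu,\vv}$ to $\mathcal{K}^{1,1}(X,\omega)^{\T}$. So there is nothing in the present paper to compare your argument against.

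That said, your plan is the standard and correct way to establish Chen--Tian type decompositions, and is essentially how it is carried out in \cite{lahdili3}: both sides vanish at $\phi=0$, and one checks that their first variations coincide. Your reduction to
\[
(d\mathcal{H}_\uu)_\phi(\dot\phi) - 2(d\mathcal{E}^{\Ric(\omega)}_\uu)_\phi(\dot\phi) = -\int_X \dot\phi\,\Scal_\uu(\phi)\,\omega^{[n]}_\phi
\]
is correct, as is your identification of the chain-rule expansion of $dd^c(\uu(m_\phi))$ as the source of the $\Delta_{\omega_\phi}(\uu(m_\phi))$ and $\Tr(\G_{\omega_\phi}\circ\Hess(\uu))$ constituents of $\Scal_\uu$. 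The only point to watch is the sign of $\delta F$: in the paper's convention $\Delta_\omega$ is the Riemannian (positive) Laplacian, so $\delta\log(\omega^n_\phi/\omega^n) = -\Delta_{\omega_\phi}\dot\phi$ rather than $+\Delta_{\omega_\phi}\dot\phi$; but you already flag sign bookkeeping as the main hazard, and it does not affect the structure of the argument.
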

For $\phi\in \mathcal{K}^{1,1}(X,\omega)^{\T}$, $\mu_{\uu}(\phi)$ is a measure with bounded coefficient which is absolutely continuous with respect to $\mu_\omega$, thus ${\rm Ent}_{\mu_\omega}(\mu_\uu(\phi))$ is well defined on $\mathcal{K}^{1,1}(X,\omega)^{\T}$. Combining this with Lemmas \ref{E-lem-def} and \ref{Lem-E-theta} yields the following.
\begin{cor}
The equation \eqref{Chen-Tian}, extends the $(\uu,\vv)$-Mabuchi energy $\mathcal{M}_{\uu,\vv}$ to a functional on the space $\mathcal{K}^{1,1}(X,\omega)^{\T}$.
\end{cor}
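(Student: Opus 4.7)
The plan is to use the weighted Chen--Tian decomposition \eqref{C-T} of \Cref{Chen-Tian} as the \emph{definition} of the extension. Since the identity already holds on $\mathcal{K}(X,\omega)^{\T}$, where both sides compute the usual $(\uu,\vv)$-Mabuchi energy, it suffices to check that each of the three summands $\mathcal{H}_{\uu}$, $-2\mathcal{E}^{\Ric(\omega)}_{\uu}$ and $c_{(\uu,\vv)}(\alpha)\mathcal{E}_{\vv}$ on the right-hand side makes sense when $\phi \in \mathcal{K}^{1,1}(X,\omega)^{\T}$, and that the resulting extensions agree with the smooth ones on $\mathcal{K}(X,\omega)^{\T}$.

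For the two energy-type terms, the required extensions are exactly the content of \Cref{E-lem-def} and \Cref{Lem-E-theta}. The point, already carried out in the proofs of those lemmas, is that the mixed expressions \eqref{ext-E-v} and \eqref{ext-E-u-theta} rewrite $\mathcal{E}_{\vv}(\phi)$ and $\mathcal{E}^{\Ric(\omega)}_{\uu}(\phi)$ as integrals of $\phi$ against mixed products of $\omega_{\phi}$ with the smooth background forms $\omega$ and $\Ric(\omega)$, with coefficients of the form $\vv_{j,n}(m_{\phi})$ or $\uu_{j,n-1}(m_{\phi})$. For $\phi \in \mathcal{K}^{1,1}(X,\omega)^{\T}$, the currents $\omega_{\phi}^{n-j}$ are well-defined in the Bedford--Taylor sense and have bounded coefficients, while by \Cref{lem-polytope} the momentum image $m_\phi(X)$ remains equal to the fixed polytope $\PP_{\alpha}$, so that the composed weights are bounded continuous functions on $X$ and the integrals are unambiguously defined.

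The only remaining point is the weighted entropy $\mathcal{H}_{\uu}(\phi) = \int_X \log\bigl(\omega_{\phi}^n / \omega^n\bigr)\,\uu(m_{\phi})\,\omega_{\phi}^{[n]}$. The key observation is that for $\phi \in \mathcal{K}^{1,1}(X,\omega)^{\T}$ the current $\omega_{\phi}$ has bounded coefficients, so the Bedford--Taylor measure $\omega_{\phi}^n$ is absolutely continuous with respect to $\omega^n$ with bounded Radon--Nikodym density $f := d\omega_{\phi}^n / d\omega^n \in L^{\infty}(X)$. The integrand then equals $f \log(f)\,\uu(m_{\phi})\,\omega^{[n]}$, and since $x \mapsto x \log x$ is bounded on the compact interval $[0,\|f\|_{\infty}]$ and $\uu(m_{\phi})$ is bounded and continuous on $X$ by \Cref{lem-polytope}, the product is in $L^{\infty}(X)$. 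This makes $\mathcal{H}_{\uu}(\phi)$, and equivalently the relative entropy $\Ent_{\mu_\omega}(\mu_{\uu}(\phi))$, finite for every $\phi \in \mathcal{K}^{1,1}(X,\omega)^{\T}$.

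The corollary then follows by \emph{defining} the extension via the right-hand side of \eqref{C-T}, noting that on $\mathcal{K}(X,\omega)^{\T}$ it reduces to the classical $(\uu,\vv)$-Mabuchi energy by \Cref{Chen-Tian}. I do not anticipate any substantial obstacle: all of the technical work has already been packaged into \Cref{lem-polytope}, \Cref{E-lem-def}, \Cref{Lem-E-theta} and \Cref{Chen-Tian}, and the present statement is essentially a direct assembly of these ingredients. The only mildly delicate point is the $L^{\infty}$ bound for $f\log f$, which is standard and which is what forces the use of the $C^{1,1}$-regularity rather than just a bounded plurisubharmonic framework.
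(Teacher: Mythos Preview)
Your proposal is correct and follows essentially the same approach as the paper: the paper's argument is simply the one-line observation that $\mu_{\uu}(\phi)$ has bounded density with respect to $\mu_\omega$ so the entropy is well-defined, combined with the extensions of $\mathcal{E}_{\vv}$ and $\mathcal{E}^{\Ric(\omega)}_{\uu}$ from \Cref{E-lem-def} and \Cref{Lem-E-theta}. You have spelled out the entropy step (the $x\log x$ bound) in slightly more detail than the paper does, but the reasoning is identical.
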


\section{Convexity of the $(\uu,\vv)$-Mabuchi energy along weak geodesics}
The following formulas allow us to compute the second variations of the energy functionals $\mathcal{E}^{\theta}_{\uu}$ and $\mathcal{E}_{\vv}$ along weak geodesics.
\begin{lem}\label{Lem-Monge}
Let $\Phi$ be a $\mathbb{G}$-invariant smooth function on $\hat{X}$ related to a family of $\T$-invariant functions $(\phi_t)_{t\in[0,1]}$ on $X$ by \eqref{complex}, and $\theta$ a $2$-form on $X$. We have
\begin{align*}
(\pi^{*}_X\omega+dd^{c}\Phi)^{[n+1]}=&-\big(\ddot{\phi}_t-|d\dot{\phi}_t|_{\phi_t}^{2}\big)\omega_{\phi_t}^{[n]}\wedge dt\wedge ds,\\
\pi^{*}_X\theta\wedge(\pi^{*}_X\omega+dd^{c}\Phi)^{[n]}=&-\big(\big(\ddot{\phi}_t-|d\dot{\phi}_t|^{2}_{\phi_t}\big)\theta\wedge\omega^{[n-1]}_{\phi_t}+(\theta,d\dot{\phi_t}\wedge d^{c}\dot{\phi_t})_\phi\,\omega^{[n]}_{\phi_t}\big)\wedge dt\wedge ds.
\end{align*} 
where $\dot{\phi}_t$ and $\ddot{\phi}_t$ are the $t$-derivatives of $\phi_t$.
\end{lem}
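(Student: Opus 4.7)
The plan is a direct local coordinate computation on $\hat X = X\times\mathbb{A}$, exploiting the $\mathbb{S}^1$-invariance of $\Phi$ to convert $\tau$-derivatives into $t$-derivatives of $\phi_t$. Throughout I would use the substitution $\tau = e^{-t+is}$, from which $d\tau/\tau = -dt+i\,ds$ yields the elementary identity $\tfrac{i\,d\tau\wedge d\bar\tau}{2|\tau|^2} = -dt\wedge ds$ that converts all remaining $\tau$-factors into $dt\wedge ds$.

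First, I would decompose $dd^c\Phi$ into a horizontal, a cross, and a pure-vertical piece. Since $\Phi(x,\tau)=\phi_{t(\tau)}(x)$ with $t=-\tfrac12\log|\tau|^2$, one computes $\partial_\tau\Phi = -\dot\phi_t/(2\tau)$, $\partial_\tau\partial_{\bar\tau}\Phi = \ddot\phi_t/(4|\tau|^2)$, and analogous mixed derivatives, obtaining $\pi_X^*\omega + dd^c\Phi = A + B + C$ with $A := \omega_{\phi_t}$ horizontal, $B := -\tfrac{i}{\bar\tau}\partial\dot\phi_t\wedge d\bar\tau - \tfrac{i}{\tau}d\tau\wedge\bar\partial\dot\phi_t$ the cross term, and $C := -\ddot\phi_t\,dt\wedge ds$ the pure vertical piece.

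Next, I would expand $(A+B+C)^{n+1}$ multinomially, keeping only terms of top bidegree $(n,n)$ on $X$ wedged with $(1,1)$ on $\mathbb{A}$. Since $C$ is purely vertical and each summand of $B$ contains exactly one vertical factor, only $A^n\wedge C$ with multiplicity $n+1$ and $A^{n-1}\wedge B\wedge B$ with multiplicity $\binom{n+1}{2}$ survive. A short computation using $d\tau\wedge d\tau = 0 = d\bar\tau\wedge d\bar\tau$ and $d\dot\phi_t\wedge d^c\dot\phi_t = 2i\,\partial\dot\phi_t\wedge\bar\partial\dot\phi_t$ gives $B\wedge B = 2\,d\dot\phi_t\wedge d^c\dot\phi_t\wedge dt\wedge ds$. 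Combined with the pointwise K\"ahler identity $n\,d\dot\phi_t\wedge d^c\dot\phi_t\wedge\omega_{\phi_t}^{n-1} = |d\dot\phi_t|^2_{\phi_t}\,\omega_{\phi_t}^n$ and division by $(n+1)!$, this yields the first formula.

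For the second formula I would run the same analysis on $\pi_X^*\theta\wedge(A+B+C)^n$. Only two types of terms survive: $\pi_X^*\theta\wedge A^{n-1}\wedge C$ (multiplicity $n$) produces the $\ddot\phi_t\,\theta\wedge\omega_{\phi_t}^{[n-1]}$ contribution, while $\pi_X^*\theta\wedge A^{n-2}\wedge B\wedge B$ (multiplicity $\binom{n}{2}$) produces the remaining terms once one invokes the pointwise identity for real $(1,1)$-forms $\alpha,\beta$ on a K\"ahler $n$-manifold,
\[ \alpha\wedge\beta\wedge\omega^{[n-2]} = \bigl[(\Lambda_\omega\alpha)(\Lambda_\omega\beta) - (\alpha,\beta)_\omega\bigr]\omega^{[n]}, \]
applied with $\alpha=\theta$ and $\beta=d\dot\phi_t\wedge d^c\dot\phi_t$. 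Using $(\Lambda_{\phi_t}\theta)\,\omega_{\phi_t}^{[n]} = \theta\wedge\omega_{\phi_t}^{[n-1]}$ and $\Lambda_{\phi_t}(d\dot\phi_t\wedge d^c\dot\phi_t) = |d\dot\phi_t|^2_{\phi_t}$ produces the $|d\dot\phi_t|^2_{\phi_t}\,\theta\wedge\omega_{\phi_t}^{[n-1]}$ and $-(\theta,d\dot\phi_t\wedge d^c\dot\phi_t)_{\phi_t}\,\omega_{\phi_t}^{[n]}$ pieces of the stated identity. The argument is essentially formal; the only real obstacle is the bookkeeping of signs, factorial prefactors ($\omega^k$ versus $\omega^{[k]}=\omega^k/k!$), and the normalization convention relating $|d\dot\phi_t|^2_{\phi_t}$ to $d\dot\phi_t\wedge d^c\dot\phi_t$.
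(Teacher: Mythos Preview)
Your proposal is correct and follows essentially the same route as the paper. The paper writes $dd^c\Phi = dd^c\phi + \gamma_\phi$ with $\gamma_\phi = -d^c\dot\phi\wedge dt - d\dot\phi\wedge ds - \ddot\phi\,dt\wedge ds$, which is exactly your $B+C$ after converting from $\tau$ to $(t,s)$ coordinates; it then expands $(\omega_\phi+\gamma_\phi)^{[n+1]}$ and $\theta\wedge(\omega_\phi+\gamma_\phi)^{[n]}$ binomially using $\gamma_\phi^2 = 2\,d\dot\phi\wedge d^c\dot\phi\wedge dt\wedge ds$ and $\gamma_\phi^3=0$. Your explicit use of the pointwise identity $\alpha\wedge\beta\wedge\omega^{[n-2]} = [(\Lambda_\omega\alpha)(\Lambda_\omega\beta)-(\alpha,\beta)_\omega]\,\omega^{[n]}$ to simplify $\theta\wedge\omega_\phi^{[n-2]}\wedge d\dot\phi\wedge d^c\dot\phi$ is a step the paper leaves implicit in its final line.
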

\begin{proof}
We have $dd^{c}\Phi=dd^{c}\phi+\gamma_\phi$ such that 
\begin{equation*}
\gamma_\phi:=-d^{c}\dot{\phi}\wedge dt-d\dot{\phi}\wedge ds-\ddot{\phi} dt\wedge ds.
\end{equation*} 
By a straightforward calculation we get $\gamma^{2}_\phi=2d\dot{\phi}\wedge d^{c}\dot{\phi}\wedge dt\wedge ds$ and
$\gamma_{\phi}^3=0$. We calculate
\begin{align*}
(\omega+dd^{c}\Phi)^{[n+1]}=&(\omega_{\phi}+\gamma_\phi)^{[n+1]}\\
=&\omega_{\phi}^{[n+1]}+\omega_{\phi}^{[n]}\wedge\gamma_\phi+\frac{1}{2}\omega_{\phi}^{[m-1]}\wedge\gamma^{2}_\phi\\
=&-\big(\ddot{\phi}-|d\dot{\phi}|_{\phi}^{2}\big)\omega_{\phi}^{[n]}\wedge dt\wedge ds.
\end{align*}
For the second identity
\begin{align}
\begin{split}
\theta\wedge(\omega+dd^{c}\Phi)^{[n]}=&\theta\wedge\big[\omega^{[n]}_\phi+\omega^{[n-1]}_\phi\wedge\gamma_{\phi}+\frac{1}{2}\omega^{[n-2]}_\phi\wedge\gamma_{\phi}^{2}\big]\\
=&\theta\wedge\omega^{[n-1]}_\phi\wedge\gamma_\phi+\frac{1}{2}\theta\wedge\omega^{[n-2]}_\phi\wedge\gamma_{\phi}^{2}\\
=&-\big((\ddot{\phi}-|d\dot{\phi}|^{2})\theta\wedge\omega^{[n-1]}_\phi+(\theta,d\dot{\phi}\wedge d^{c}\dot{\phi})\omega^{[n]}_\phi\big)\wedge dt\wedge ds.
\end{split}
\end{align}
\end{proof}
We start by computing the second variation of $\mathcal{E}_{\uu}^\theta$ and $\mathcal{E}_{\vv}$ on smooth families of smooth $\T$-invariant K\"ahler potentials. 
\begin{lem}\label{ddc-E}
Let $(\phi_t)_{t\in[0,1]}\in\mathcal{K}(X,\omega)^{\T}$ be a smooth family of K\"ahler potentials and $\Phi$ the $\mathbb{G}$-invariant function on $\hat{X}$, corresponding to $(\phi_t)_{t\in[0,1]}$ given by \eqref{complex}. Let $\phi_\tau:=\Phi(\cdot,\tau)$. 
\begin{enumerate}
\item\label{item-E} The second variation of the function $\tau\mapsto\mathcal{E}_{\vv}(\phi_\tau)$ on $\mathbb{A}$ is given by
\begin{align}
\begin{split}\label{eq-ddc-E}
dd^{c}\mathcal{E}_{\vv}(\tau)=&\int_X \vv(m_{\Phi})\left(\pi^{*}_X\omega+dd^{c}\Phi\right)^{[n+1]},
\end{split}
\end{align}
where $m_\Phi(x,\tau):=m_{\phi_\tau}$, and $\int_X$ is the push forward map on $\pi_\mathbb{A}:\hat{X}\to \mathbb{A}$.
\item\label{item-E-theta} The second variation of the function $\tau\mapsto\mathcal{E}_{\uu}^\theta(\phi_\tau)$ on $\mathbb{A}$ is given by
\begin{align}
\begin{split}\label{eq-ddc-E-theta}
dd^{c}\mathcal{E}^{\theta}_{\uu}(\phi_\tau)=\int_X\uu(m_{\Phi})\pi^{*}_X\theta\wedge&(\pi^{*}_X\omega+dd^{c}\Phi)^{[n]}+\langle d\uu(m_{\Phi}),m_{\theta}\rangle(\pi^{*}_X\omega+dd^{c}\Phi)^{[n+1]}.
\end{split}
\end{align}
\end{enumerate}
\end{lem}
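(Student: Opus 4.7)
My plan is direct verification: both sides of each claimed identity are $\mathbb{S}^1$-invariant $(1,1)$-forms on $\mathbb{A}$ thanks to the $\mathbb{G}$-invariance of $\Phi$, and in the variable $\tau=e^{-t+is}$ they are determined by their $dt\wedge ds$-coefficient. Thus the statement reduces to matching the second $t$-derivative of the energy with the fiber integral of the explicit form on $\hat X$ obtained from Lemma~\ref{Lem-Monge}.

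For part (i), Lemma~\ref{Lem-Monge} together with the standard identity $d\dot\phi_t\wedge d^c\dot\phi_t\wedge\omega^{[n-1]}_{\phi_t}=|d\dot\phi_t|^2_{\phi_t}\omega^{[n]}_{\phi_t}$ rewrites the right-hand side as
\[
\int_X\vv(m_\Phi)(\pi^*_X\omega+dd^c\Phi)^{[n+1]}=-\Bigl(\int_X\vv(m_{\phi_t})\bigl(\ddot\phi_t-|d\dot\phi_t|^2_{\phi_t}\bigr)\omega^{[n]}_{\phi_t}\Bigr)\,dt\wedge ds.
\]
I would match this by differentiating the first variation $\partial_t\mathcal{E}_\vv(\phi_t)=\int_X\dot\phi_t\,\vv(m_{\phi_t})\omega^{[n]}_{\phi_t}$ once more in $t$, which produces three contributions: an $\ddot\phi_t$-term, a $\partial_t[\vv(m_{\phi_t})]$-term (using $\partial_t m_{\phi_t}(\xi)=d^c\dot\phi_t(\xi)$), and a $dd^c\dot\phi_t\wedge\omega^{[n-1]}_{\phi_t}$-term. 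Integrating by parts on the last yields the desired $-|d\dot\phi_t|^2_{\phi_t}$ contribution, plus a cross-term in $d[\vv(m_{\phi_t})]$ which cancels the $\partial_t\vv$-contribution using the momentum map identity $d\langle m_\phi,\xi\rangle=-\iota_\xi\omega_\phi$ and the algebraic identity $d^c g(\xi)\omega^{[n]}_\phi+\iota_\xi\omega_\phi\wedge d^c g\wedge\omega^{[n-1]}_\phi=0$, obtained by contracting the identically zero form $d^c g\wedge\omega^{[n]}_\phi$ with $\xi$.

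Part (ii) follows the same template, with Lemma~\ref{Lem-Monge} expressing $\uu(m_\Phi)\pi^*_X\theta\wedge(\pi^*_X\omega+dd^c\Phi)^{[n]}$ as an explicit form linear in $(\ddot\phi_t-|d\dot\phi_t|^2_{\phi_t})\theta\wedge\omega^{[n-1]}_{\phi_t}$ and in $(\theta,d\dot\phi_t\wedge d^c\dot\phi_t)_{\phi_t}\omega^{[n]}_{\phi_t}$, while $\langle d\uu(m_\Phi),m_\theta\rangle(\pi^*_X\omega+dd^c\Phi)^{[n+1]}$ is handled as in part (i). On the variational side, differentiating \eqref{E-theta} in $t$ and regrouping using the two momentum map identities $d\langle m_\phi,\xi\rangle=-\iota_\xi\omega_\phi$ and $\theta(\xi,\cdot)=-d\langle m_\theta,\xi\rangle$---together with the $\T$-invariance of $\dot\phi_t$---reproduces the right-hand side. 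The main obstacle is the bookkeeping: each differentiated piece of the first variation generates several cross-terms (from $\partial_t[\uu\circ m_{\phi_t}]$, $\partial_t\omega^{[n-1]}_{\phi_t}$, $\partial_t\omega^{[n]}_{\phi_t}$, and $\partial_t[(d\uu)\circ m_{\phi_t}]$), and one must track how they recombine into the two pieces of the right-hand side. Since the underlying cancellation mechanism is identical to part (i), the computation is lengthy but routine.
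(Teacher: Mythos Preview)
Your proposal is correct and follows essentially the same approach as the paper: reduce via $\mathbb{S}^1$-invariance to the $dt\wedge ds$-coefficient, express the right-hand side fiberwise using Lemma~\ref{Lem-Monge}, and match it with the second $t$-derivative of the energy. The only difference is that the paper outsources the variational computations---citing \cite[Proposition~10.d]{Bern} for part~\ref{item-E} and invoking \cite[equation~(18)]{lahdili3} (which packages the first variation of $\mathcal{E}^\theta_\uu$ as $(\bdelta\mathcal{B}_\uu(\dot\phi))_{\phi_t}(\dot\phi)$ plus boundary terms) for part~\ref{item-E-theta}---whereas you carry out the integration-by-parts and momentum-map cancellations by hand; your self-contained argument is a valid substitute for those citations.
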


\begin{proof}
The proof of \ref{item-E} is given in \cite[Proposition 10.d]{Bern}. For \ref{item-E-theta}, by the $\mathbb{S}^{1}$-invariance of $\Phi$, we have
\begin{equation}\label{dd}
dd^{c}\mathcal{E}^{\theta}_{\uu}(\phi_\tau)=\left[\frac{\partial^{2}}{\partial\tau\partial\bar{\tau}}\mathcal{E}^{\theta}_{\uu}(\tau)\right]d\tau \wedge d\bar{\tau}=-\left[\frac{d^{2}}{dt^{2}}\mathcal{E}^{\theta}_{\uu}(\phi_t)\right]dt\wedge ds
\end{equation}
Using \cite[equation (18)]{lahdili3} and \eqref{E-theta} we get
\begin{align}
\begin{split}\label{eq-d-E-theta}
\frac{d^{2}}{dt^{2}}\mathcal{E}^{\theta}_{\uu}(\phi_t)=&(\bdelta\mathcal{B}_{\uu}(\dot{\phi}))_{\phi_t}(\dot{\phi})+\int_X\ddot{\phi} \uu(m_{\phi})\theta\wedge\omega^{[n-1]}_\phi+\langle d\uu(m_{\phi}),m_{\theta}\rangle\ddot{\phi}\omega^{[n]}_{\phi}\\
=&\int_X(\ddot{\phi}-|d\dot{\phi}|^{2}_\phi) \uu(m_{\phi})\theta\wedge\omega^{[n-1]}_\phi+\int_X(\ddot{\phi}-|d\dot{\phi}|^{2}_\phi)\langle d\uu(m_{\phi}),m_{\theta}\rangle\omega^{[n]}_{\phi}\\&+\int_X(\theta,d\dot\phi\wedge d^{c}\dot{\phi})_\phi\uu(m_{\phi})\omega^{[n]}_\phi.
\end{split}
\end{align}
The identity \eqref{eq-ddc-E-theta} follows from \Cref{Lem-Monge} and the equalities \eqref{dd} and \eqref{eq-d-E-theta}.
\end{proof}

Now we consider the second variations along a weak geodesic segment.

\begin{lem}\label{lem-eq-ddc-func}
Let $(\phi_t)_{t\in[0,1]}\in\mathcal{K}^{1,1}(X,\omega)^{\T}$ be a weak geodesic segment and $\Phi$ the $\mathbb{G}$-invariant corresponding solution of the boundary value problem \eqref{MA} on $\hat{X}$. The following identities holdes in the weak sense of currents
\begin{align}
dd^{c}\mathcal{E}_{\vv}(\phi_\tau)=&0,\label{eq-a}\\
dd^{c}\mathcal{E}^{\theta}_{\uu}(\phi_\tau)=&\int_X\uu(m_{\Phi})\pi^{*}_X\theta\wedge(\pi^{*}_X\omega+dd^{c}\Phi\big)^{[n]}.\label{eq-b}
\end{align}
\end{lem}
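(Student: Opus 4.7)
The plan is to approximate the weak geodesic $\Phi$ by Chen's smooth $\epsilon$-geodesics $\Phi^\epsilon$ solving the regularised equation \eqref{MA-epsilon}, apply the smooth-family computation of \Cref{ddc-E} to each $(\phi^\epsilon_t)_{t\in[0,1]}$, and then pass to the limit $\epsilon\to 0$ in the sense of currents on $\mathbb{A}$. Concretely, testing against $\psi\in C^\infty_c(\mathbb{A})$ and integrating by parts reduces \eqref{eq-a} and \eqref{eq-b} to two convergence statements: that $\mathcal{E}_\vv(\phi^\epsilon_\tau)\to \mathcal{E}_\vv(\phi_\tau)$ and $\mathcal{E}^\theta_\uu(\phi^\epsilon_\tau)\to \mathcal{E}^\theta_\uu(\phi_\tau)$ locally uniformly in $\tau$, and that the smooth right-hand sides converge in $\mathcal{D}'(\mathbb{A})$ to the claimed limits.

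By \Cref{ddc-E} we have, for every $\epsilon>0$,
\begin{align*}
dd^c \mathcal{E}_\vv(\phi^\epsilon_\tau) &= \int_X \vv(m_{\Phi^\epsilon})\,(\pi_X^*\omega + dd^c\Phi^\epsilon)^{[n+1]}, \\
dd^c \mathcal{E}^\theta_\uu(\phi^\epsilon_\tau) &= \int_X \uu(m_{\Phi^\epsilon})\,\pi_X^*\theta\wedge(\pi_X^*\omega + dd^c\Phi^\epsilon)^{[n]} \\
&\quad + \int_X \langle d\uu(m_{\Phi^\epsilon}), m_\theta\rangle\,(\pi_X^*\omega + dd^c\Phi^\epsilon)^{[n+1]}.
\end{align*}
The regularisation \eqref{MA-epsilon} gives $(\pi_X^*\omega + dd^c\Phi^\epsilon)^{[n+1]} = \epsilon\,\beta$ for a fixed smooth volume form $\beta$ on $\hat X$ independent of $\epsilon$, while $m_{\Phi^\epsilon}$ takes values in the compact polytope $\PP_\alpha$, on which $\vv$ and $d\uu$ are smooth and thus uniformly bounded. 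Hence the terms involving $(\pi_X^*\omega + dd^c\Phi^\epsilon)^{[n+1]}$ are of order $\epsilon$ and vanish in the limit, establishing \eqref{eq-a} and eliminating the second summand on the right of \eqref{eq-b}.

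For the remaining term in \eqref{eq-b}, I would rely on two convergence facts. First, since $\Phi^\epsilon\downarrow \Phi$ in the weak $C^{1,1}$ topology, the first derivatives converge uniformly on $\hat X$, so $m_{\Phi^\epsilon} = \pi_X^* m_\omega + d^c\Phi^\epsilon \to m_\Phi$ uniformly in $\PP_\alpha$; by smoothness of $\uu$ this upgrades to uniform convergence $\uu(m_{\Phi^\epsilon})\to \uu(m_\Phi)$ on $\hat X$. Second, the monotone decrease $\Phi^\epsilon\downarrow \Phi$ together with Bedford--Taylor's continuity theorem yields weak convergence of the Monge--Amp\`ere currents $(\pi_X^*\omega + dd^c\Phi^\epsilon)^{[n]} \to (\pi_X^*\omega + dd^c\Phi)^{[n]}$ as Borel measures. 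Multiplying a uniformly convergent continuous weight against a weakly convergent sequence of positive measures gives weak convergence of the product, and pushing forward by $\pi_\mathbb{A}$ yields distributional convergence on $\mathbb{A}$. The convergence of the left-hand sides is handled by the same recipe applied to the explicit primitives \eqref{ext-E-v} and \eqref{ext-E-u-theta}, which are finite sums of integrals of $\phi^\epsilon_\tau$ against mixed wedge products and continuous weights.

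The main obstacle I anticipate is precisely this last step, the passage to the limit inside $\uu(m_{\Phi^\epsilon})\,\pi_X^*\theta\wedge(\pi_X^*\omega + dd^c\Phi^\epsilon)^{[n]}$, where the weight factor $\uu(m_{\Phi^\epsilon})$ a priori depends on derivatives of $\Phi^\epsilon$ which converge only weakly in $C^{1,1}$. What rescues the argument is that $m_{\Phi^\epsilon}$ involves only \emph{first} derivatives of $\Phi^\epsilon$, which do converge uniformly by the compact embedding implicit in weak $C^{1,1}$ convergence; this uniform convergence of the weight can then be coupled with the Bedford--Taylor convergence of the $n$-th Monge--Amp\`ere in the standard way. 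This is the same structural feature of the momentum map that was already exploited in \Cref{lem-polytope}.
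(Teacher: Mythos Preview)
Your proposal is correct and follows essentially the same approach as the paper: approximate by Chen's $\epsilon$-geodesics, apply the smooth computation of \Cref{ddc-E}, use \eqref{MA-epsilon} to see that the $(n+1)$-st Monge--Amp\`ere terms are $O(\epsilon)$, and pass to the limit via the $C^1$-convergence of the momentum maps combined with Bedford--Taylor continuity under monotone limits. One small imprecision worth noting is that $\pi_X^*\theta\wedge(\pi_X^*\omega+dd^c\Phi^\epsilon)^{[n]}$ need not be a \emph{positive} measure (since $\theta$ is only a closed $(1,1)$-form), but the convergence still holds because these signed measures have uniformly bounded total variation thanks to the uniform $L^\infty$ bound on $dd^c\Phi^\epsilon$; the paper glosses over this in the same way.
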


\begin{proof}
The equation \eqref{eq-a} is already established in \cite[Proposition 2.16]{BB} and \cite[Proposition 10.4]{Bern}. The same argument works for \eqref{eq-b}. We give the proof for convenience of the reader. Let $(\phi^{\epsilon}_t)_{t\in [0,1]}$ be the $\epsilon$-geodesic approximating $(\phi_t)_{t\in [0,1]}$ and $\Phi^{\epsilon}$ the corresponding solution of the elliptic Direchlet problem \eqref{MA-epsilon}. By \Cref{ddc-E}, we have
\begin{align}
\begin{split}\label{eq-ddd}
dd^{c}\mathcal{E}_{\vv}(\phi^{\epsilon}_\tau)=&\int_X \epsilon\vv(m_{\Phi^{\epsilon}})\Big(\pi^{*}_X\omega+\frac{\sqrt{-1}d\tau\wedge d\bar\tau}{2|\tau|^{2}}\Big)^{[n+1]},\\
dd^{c}\mathcal{E}^{\theta}_{\uu}(\phi^{\epsilon}_\tau)=&\int_X\uu(m_{\Phi^\epsilon})\pi^{*}_X\theta\wedge(\pi^{*}_X\omega+dd^{c}\Phi^\epsilon)^{[n]}+\epsilon\langle d\uu(m_{\Phi^\epsilon}),m_{\theta}\rangle\Big(\omega+\frac{\sqrt{-1}d\tau\wedge d\bar\tau}{2|\tau|^{2}}\Big)^{[n+1]}.
\end{split}
\end{align}
We have $\Phi^{\epsilon}$ is decreasing in $\epsilon>0$ and $\Phi^{\epsilon}\rightarrow \Phi$ in $(C^{1,1},\parallel\cdot\parallel_{C^1}+\parallel dd^{c}\cdot\parallel_{L^{\infty}})$ when $\epsilon\rightarrow 0$. Using the identity
\begin{equation*}
m_{\phi^{\epsilon}_\tau}=m_{\omega}+d^{c}\phi^{\epsilon}_\tau,
\end{equation*} 
and the fact that $\uu$ is smooth on $\PP_\alpha$, we obtain
$$\mathcal{E}_{\vv}(\phi^{\epsilon}_\tau)\rightarrow \mathcal{E}_{\vv}(\phi_\tau)\text{ and }\mathcal{E}_{\uu}^{\theta}(\phi^{\epsilon}_\tau)\rightarrow \mathcal{E}^{\theta}_{\uu}(\phi_\tau),$$
since the Monge-Amp{\`e}re measures converges weakly under decreasing limits. It follows that 
$$dd^c\mathcal{E}_{\vv}(\phi^{\epsilon}_\tau)\rightarrow dd^c\mathcal{E}_{\vv}(\phi_\tau)\text{ and }dd^c\mathcal{E}_{\uu}^{\theta}(\phi^{\epsilon}_\tau)\rightarrow dd^c\mathcal{E}^{\theta}_{\uu}(\phi_\tau),$$
in the weak sense of distributions. Passing to the limit when $\epsilon\to 0$ in the rhs of the equations of \eqref{eq-ddd}, and using the fact that $\pi^{*}_X\theta\wedge(\pi^{*}_X\omega+dd^{c}\Phi^\epsilon)^{[n]}\to \pi^{*}_X\theta\wedge(\pi^{*}_X\omega+dd^{c}\Phi)^{[n]}$ in the sense of measures (since $\Phi^\epsilon\searrow\Phi$), we obtain \eqref{eq-a} and  \eqref{eq-b}.
\end{proof}

\begin{cor}\label{Cor-E-theta-strict}
Let $\theta$ be a $\T$-invariant K\"ahler form. The functional $\mathcal{E}_{\uu}^{\theta}$ is strictly convex on weak geodesic segments. In particular $\mathcal{E}_{\uu}^{\theta}$ has at most one critical point in $\mathcal{K}^{1,1}(X,\omega)^{\T}$.
\end{cor}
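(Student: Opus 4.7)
The approach is to combine the second-variation identity \eqref{eq-b} of Lemma~\ref{lem-eq-ddc-func} with the computation along smooth families in Lemma~\ref{Lem-Monge}, exploiting the strict positivity of $\theta$ and $\uu$. From \eqref{eq-b},
\[
dd^c \mathcal{E}_{\uu}^\theta(\phi_\tau) = \int_X \uu(m_\Phi)\,\pi_X^*\theta \wedge \big(\pi_X^*\omega + dd^c\Phi\big)^{[n]},
\]
and since $\theta$ is K\"ahler, $\uu > 0$, and $\omega_\Phi := \pi_X^*\omega + dd^c\Phi$ is a positive $(1,1)$-current with bounded coefficients, the integrand is a non-negative Radon measure on $\hat X$ in the Bedford--Taylor sense. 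Fiber-integration preserves non-negativity, and together with the $\mathbb{S}^1$-invariance of $\Phi$ this yields convexity of $f(t) := \mathcal{E}_{\uu}^\theta(\phi_t)$ in $t = -\log|\tau|$.

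To upgrade convexity to strict convexity on non-trivial geodesics, I would first treat the smooth case. Along a smooth geodesic, the geodesic equation $\ddot\phi_t = |d\dot\phi_t|^2_{\phi_t}$ simplifies the identity of Lemma~\ref{Lem-Monge} to
\[
\pi_X^*\theta \wedge \omega_\Phi^{[n]} = -(\theta, d\dot\phi_t \wedge d^c\dot\phi_t)_{\phi_t}\,\omega_{\phi_t}^{[n]} \wedge dt \wedge ds,
\]
so that
\[
f''(t) = \int_X \uu(m_{\phi_t})\,(\theta, d\dot\phi_t \wedge d^c\dot\phi_t)_{\phi_t}\,\omega_{\phi_t}^{[n]}.
\]
Since $\theta$ is K\"ahler and $d\dot\phi_t \wedge d^c\dot\phi_t \geq 0$ vanishes precisely where $d\dot\phi_t = 0$, while $\uu > 0$, the integrand is pointwise non-negative and vanishes identically iff $d\dot\phi_t \equiv 0$ on $X$. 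If $f''$ vanishes on a sub-interval, then $\dot\phi_t$ depends only on $t$, hence $\phi_t - \phi_{t_0}$ is a function of $t$ alone, so $\omega_{\phi_t} = \omega_{\phi_{t_0}}$ on that sub-interval, i.e., the geodesic is trivial there. For a weak geodesic in $\mathcal{K}^{1,1}(X,\omega)^\T$, I would pass to the limit via the smooth $\epsilon$-geodesics of \eqref{MA-epsilon}, using that $\Phi^\epsilon \to \Phi$ in weak $C^{1,1}$-topology with weak convergence of the associated Monge--Amp\`ere measures, exactly as in the proof of Lemma~\ref{lem-eq-ddc-func}. The main technical obstacle lies precisely here: ensuring that if the positive measure $\uu(m_\Phi)\pi_X^*\theta \wedge \omega_\Phi^{[n]}$ has vanishing fiber-integral on a sub-cylinder $X \times (a,b)$, then the corresponding portion of the weak geodesic is trivial---this rigidity in the $C^{1,1}$-setting is handled following \cite{BB}, with the weight $\uu > 0$ playing a purely multiplicative role.

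Finally, for the uniqueness assertion, if $\phi_0, \phi_1 \in \mathcal{K}^{1,1}(X,\omega)^\T$ are two critical points of $\mathcal{E}_{\uu}^\theta$, then the convexity of $\mathcal{E}_{\uu}^\theta$ along all weak geodesics emanating from each of them implies that both are global minima. Consequently the convex function $f$ on the weak geodesic joining $\phi_0$ to $\phi_1$ attains its minimum at both endpoints, and therefore is constant on $[0,1]$. Strict convexity then forces the geodesic to be trivial, giving $\omega_{\phi_0} = \omega_{\phi_1}$, which is the desired uniqueness (modulo additive constants) of critical points.
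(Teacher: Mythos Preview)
Your proposal is correct and follows essentially the same approach as the paper: both arrive at the key second-variation formula
\[
\frac{d^2}{dt^2}\mathcal{E}_{\uu}^{\theta}(\phi_t)=\int_X(\theta, d\dot{\phi}_t\wedge d^{c}\dot{\phi}_t)_{\phi_t}\uu(m_{\phi_t})\omega^{[n]}_{\phi_t},
\]
and conclude strict convexity from positivity of $\theta$ and $\uu$. The paper simply cites \eqref{eq-d-E-theta} and asserts it holds along weak geodesics, while you route through \eqref{eq-b} and Lemma~\ref{Lem-Monge} and are more explicit about the $\epsilon$-geodesic approximation and the rigidity step; this extra care is welcome but not a genuinely different strategy.
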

\begin{proof}
Using \eqref{eq-d-E-theta}, we see that the following formula holds on any weak geodesic segment
\begin{equation*}
\frac{d^2}{dt^2}\mathcal{E}_{\uu}^{\theta}(\phi_t)=\int_X(\theta, d\dot{\phi}\wedge d^{c}\dot{\phi})_{\phi_t}\uu(m_{\phi_t})\omega^{[n]}_{\phi_t}>0,
\end{equation*} 
since $\theta$ is a K\"ahler form. Thus, $t\mapsto\mathcal{E}_{\uu}^{\theta}(\phi_t)$ is strictly convex. 
\end{proof}

Now we consider the entropy part of the $(\uu,\vv)$-Mabuchi energy. For a family of $\mathbb{G}$-invariant volume forms $\Theta_\tau$ on $X$ we associate a function $\Psi:=\log(\Theta_\tau)$ on $\hat{X}$, given locally on a holomorphic coordinate patch $(U,(z_j)_{j=1,n})$ on $X$ by
\begin{equation}\label{Psi}
\Psi_{U}=\log\left(\frac{\Theta_\tau}{{\rm vol_U}}\right),
\end{equation}
where ${\rm vol_U}$ is the volume form of the flat K\"ahler metric $\frac{\sqrt{-1}}{2}\sum_{j=1}^{n}dz_j\wedge d\bar{z}_j$ on $U$. For $(\phi_\tau)_{\tau\in\mathbb{A}}\subset\mathcal{K}^{1,1}(X,\omega)^{\T}$, we define 
\begin{align}\label{H-Psi}
\begin{split}
\mathcal{H}^{\Psi}_{\uu}(\phi_\tau):=&\int_X\log\big(\frac{\Theta_\tau}{\omega^{n}}\big)\uu(m_{\phi_\tau})\omega_{\phi_\tau}^{[n]}\\
=&\int_X\log\big(\frac{e^{\psi_\tau}}{\omega^{n}}\big)\uu(m_{\phi_\tau})\omega_{\phi_\tau}^{[n]},
\end{split}
\end{align}
where $\psi_\tau:=\Psi_{|X_\tau}$.

\begin{lem}\label{ddc-H}
Let $(\phi_t)_{t\in[0,1]}\in\mathcal{K}^{1,1}(X,\omega)^{\T}$ be a weak geodesic segment and denote by $\Phi$ the associated $\mathbb{G}$-invariant function on $\hat{X}$. If $\Psi$ (given by \eqref{Psi}) is smooth, then we have
\begin{equation}\label{eq-ddc-H}
dd^{c}\big(\mathcal{H}^{\Psi}_{\uu}(\phi_\tau)-2\mathcal{E}^{\Ric(\omega)}_{\uu}(\phi_{\tau})\big)=\int_X\uu(m_{\Phi})dd^{c}\Psi\wedge\big(\pi^{*}_X\omega+dd^{c}\Phi\big)^{[n]},
\end{equation}
in the weak sense of currents.
\end{lem}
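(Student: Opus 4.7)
The plan is to first establish the identity when $\Phi$ is smooth (i.e., for the $\epsilon$-geodesics $\Phi^\epsilon$ solving \eqref{MA-epsilon}) and then pass to the limit $\epsilon\to 0$, following the same scheme as in the proof of Lemma \ref{lem-eq-ddc-func}.

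For the smooth case, I would differentiate
\begin{equation*}
\mathcal{H}^\Psi_\uu(\phi_t) = \int_X \log\bigl(\Theta_t/\omega^n\bigr)\,\uu(m_{\phi_t})\,\omega_{\phi_t}^{[n]}
\end{equation*}
twice in $t$. Using that on any coordinate patch $U$ one has $\log(\Theta_t/\omega^n) = \Psi_U - \log(\omega^n/\mathrm{vol}_U)$ with $dd^c_X\log(\omega^n/\mathrm{vol}_U) = -\Ric(\omega)$, and integrating by parts to remove second $X$-derivatives of $\dot\phi_t$, one can, via Lemma \ref{Lem-Monge}, reorganize $\frac{d^2}{dt^2}\mathcal{H}^\Psi_\uu(\phi_t)\,dt\wedge ds$ as the sum of three pieces: (i) the fiber integral of $\uu(m_\Phi)\,dd^c_{\hat X}\Psi\wedge(\pi_X^*\omega+dd^c\Phi)^{[n]}$, coming from the $\Psi$-part (which contributes both the fiberwise $dd^c_X\Psi$ and the $d\tau\wedge d\bar\tau$ piece obtained by differentiating $\psi_t$ twice in $t$); (ii) twice the expression for $dd^c_\tau\mathcal{E}^{\Ric(\omega)}_\uu(\phi_\tau)$ given by Lemma \ref{ddc-E}(ii), assembled from the $-\log(\omega^n/\mathrm{vol}_U)$ part together with the cross moment-map term $\langle d\uu(m_\Phi), m_{\Ric(\omega)}\rangle(\pi_X^*\omega+dd^c\Phi)^{[n+1]}$ produced when a $t$-derivative hits $\uu(m_{\phi_t})$; and (iii) residual $(\ddot\phi-|d\dot\phi|^2_\phi)$-terms, which by Lemma \ref{Lem-Monge} are absorbed into $(\pi_X^*\omega+dd^c\Phi)^{[n+1]}$-contributions already accounted for in (i) and (ii). Subtracting $2\,dd^c_\tau\mathcal{E}^{\Ric(\omega)}_\uu$ kills (ii), leaving exactly (i).

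For the weak geodesic case, I would apply the smooth identity to $\Phi^\epsilon$ and let $\epsilon\to 0$. Since $\Psi$ is smooth, $\log(\Theta_\tau/\omega^n)$ is continuous and bounded on $\hat X$, and since $\Phi^\epsilon\searrow\Phi$ in the weak $C^{1,1}$-topology, Bedford--Taylor gives $\omega_{\phi^\epsilon_\tau}^{[n]}\to\omega_{\phi_\tau}^{[n]}$ weakly as measures on each fiber $X_\tau$, whence $\mathcal{H}^\Psi_\uu(\phi^\epsilon_\tau)\to\mathcal{H}^\Psi_\uu(\phi_\tau)$ pointwise in $\tau$; similarly $\mathcal{E}^{\Ric(\omega)}_\uu(\phi^\epsilon_\tau)\to\mathcal{E}^{\Ric(\omega)}_\uu(\phi_\tau)$ by Lemma \ref{Lem-E-theta}. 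Hence the left-hand side of the smooth identity converges in the weak sense of currents on $\mathbb{A}$ after applying $dd^c_\tau$. For the right-hand side, $(\pi_X^*\omega+dd^c\Phi^\epsilon)^{[n]}\to(\pi_X^*\omega+dd^c\Phi)^{[n]}$ weakly as currents on $\hat X$ by Bedford--Taylor, and multiplication by the continuous $(1,1)$-form $\uu(m_\Phi)\,dd^c_{\hat X}\Psi$ preserves weak convergence, yielding \eqref{eq-ddc-H}.

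The main difficulty is the bookkeeping in Step~1: one must separate the $dd^c_{\hat X}\Psi$ contribution—mixing the fiberwise $dd^c_X\Psi$ with the $d\tau\wedge d\bar\tau$ piece from twice $t$-differentiating $\psi_t$—from the Ricci contribution produced by $-\log(\omega^n/\mathrm{vol}_U)$, and match the latter precisely with Lemma \ref{ddc-E}(ii) for $\theta=\Ric(\omega)$. This mirrors the $\uu\equiv 1$ computation of Berman--Berndtsson but requires a weighted integration-by-parts tracking the extra $\langle d\uu(m_{\phi_t}), d^c\dot\phi_t\rangle$-contributions coming from the $t$-dependence of the momentum map.
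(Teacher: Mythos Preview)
Your approach is workable but differs from the paper's. You propose a two-step scheme: first establish a smooth identity for the $\epsilon$-geodesics by computing $\frac{d^2}{dt^2}\mathcal{H}^\Psi_\uu(\phi^\epsilon_t)$ directly over $X$ and reorganising via Lemma~\ref{Lem-Monge}, then pass to the limit $\epsilon\to 0$. The paper instead works \emph{directly} with the weak geodesic $\Phi$ on $\hat X$: it pairs $dd^c\mathcal{H}^\Psi_\uu(\phi_\tau)$ against a test function $f$ on $\mathbb{A}$, rewrites the pairing as an integral over $\hat X$ of $\log(e^\Psi/\pi_X^*\omega^n)\,\uu(m_\Phi)\,dd^c\hat f\wedge(\pi_X^*\omega+dd^c\Phi)^{[n]}$, and integrates by parts twice on $\hat X$. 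Two cross-terms arise involving $d(\uu(m_\Phi))$; these are shown to vanish by passing to the $\epsilon$-geodesic, where they become multiples of $(\pi_X^*\omega+dd^c\Phi^\epsilon)^{[n+1]}$ paired either with $(d\hat f)_\tor=0$ or with a bounded factor times $\epsilon$. What remains is $\int_{\hat X}\hat f\,\uu(m_\Phi)\,dd^c\log(e^\Psi/\pi_X^*\omega^n)\wedge(\pi_X^*\omega+dd^c\Phi)^{[n]}$, which splits as the $dd^c\Psi$ piece plus $2\pi_X^*\Ric(\omega)$, the latter cancelling against $2\,dd^c\mathcal{E}^{\Ric(\omega)}_\uu$ via \eqref{eq-b}.

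The paper's route bypasses precisely the bookkeeping you flag as the main difficulty: by integrating by parts on $\hat X$ rather than differentiating twice in $t$, one never has to assemble the mixed fibre/base pieces of $dd^c_{\hat X}\Psi$ by hand, and the weighted cross-terms are handled in one stroke by the observation that they are proportional to the top Monge--Amp\`ere power. Your approach would also work, but note that the smooth identity you aim for in Step~1 does \emph{not} hold exactly for a general smooth path: there is an additional term of the form $\langle (d\uu)(m_{\Phi^\epsilon}),(d^c\Psi)_\tor\rangle(\pi_X^*\omega+dd^c\Phi^\epsilon)^{[n+1]}$ that is not absorbed by either (i) or (ii); it is $O(\epsilon)$ along the $\epsilon$-geodesic and so disappears in the limit, but your sketch should account for it rather than claim everything cancels at the smooth level.
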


\begin{proof}
Let $f(\tau)$ be a test function with support in $\mathbb{A}$ and $\hat{f}:=\pi^{\star}_{\mathbb{A}}f$. We have
\begin{align*}
\langle dd^{c}\mathcal{H}^{\Psi}(\phi_\tau),f\rangle=&\int_{\mathbb{A}}dd^{c}f\int_X\log\big(\frac{e^{\psi_\tau}}{\omega^{n}}\big)\uu(m_{\phi_\tau})\omega_{\phi_\tau}^{[n]}\\
=&\int_{\mathbb{A}}dd^{c}f\int_{X_\tau} \Big(\log\big(\frac{e^{\Psi}}{\pi^{*}_X\omega^{n}}\big)\uu(m_{\Phi})(\pi^{*}_X\omega+dd^{c}\Phi)^{[n]}\Big)_{|X_\tau}\\
=&\int_{\hat{X}} \log\big(\frac{e^{\Psi}}{\pi^{*}_X\omega^{n}}\big)\uu(m_{\Phi})dd^{c}\hat{f}\wedge(\pi^{*}_X\omega+dd^{c}\Phi)^{[n]}\\
=&\int_{\hat{X}}\log\big(\frac{e^{\Psi}}{\pi^{*}_X\omega^{n}}\big) d(\uu(m_{\Phi}))\wedge d^{c}\hat{f}\wedge(\pi^{*}_X\omega+dd^{c}\Phi)^{[n]}\\
&-\int_{\hat{X}} \uu(m_{\Phi})d\log\big(\frac{e^{\Psi}}{\pi^{*}_X\omega^{n}}\big)\wedge d^{c}\hat{f}\wedge(\pi^{*}_X\omega+dd^{c}\Phi)^{[n]}
\end{align*}
Notice that $d(\uu(m_{\Phi}))\wedge d^{c}\hat{f}\wedge(\pi^{*}_X\omega+dd^{c}\Phi)^{[n]}=0$, using approximation by an $\epsilon$-geodesic $\Phi^{\epsilon}$ and the fact that $d\hat{f}$ is zero on fundamental vector fields of the $\T$-action: Indeed
\begin{align*}
d(\uu(m_{\Phi^\epsilon}))\wedge d^{c}\hat{f}\wedge(\pi^{*}_X\omega+dd^{c}\Phi^\epsilon)^{[n]}=&\langle (d\uu)(m_{\Phi^\epsilon}),(d\hat{f})_{\tor}\rangle(\pi^{*}_X\omega+dd^{c}\Phi^\epsilon)^{[n+1]}=0,
\end{align*}
since $(d\hat{f})_{\tor}$ the restriction of $d\hat{f}$ on the fundamental vector fields of $\tor$ is zero ($(d\hat{f})_{\tor}=(df\circ\pi_*)_{\tor}=0$). Using that $\Phi^\epsilon\searrow\Phi$ in $C^{1,1}$ topology, passing to the limit as $\epsilon\to 0$, yields $d(\uu(m_{\Phi}))\wedge d^{c}\hat{f}\wedge(\pi^{*}_X\omega+dd^{c}\Phi)^{[n]}=0$.\\
Integration by parts gives
\begin{align*}
\langle dd^{c}\mathcal{H}^{\Psi}_{\uu},f\rangle=&\int_{\hat{X}}\hat{f}d\log\big(\frac{e^{\Psi}}{\pi^{*}_X\omega^{n}}\big)\wedge d^c \uu(m_{\Phi})\wedge (\pi^{*}_X\omega+dd^{c}\Phi)^{[n]}\\
&+\int_{\hat{X}}\hat{f}\uu(m_{\Phi})dd^{c}\log\big(\frac{e^{\Psi}}{\pi^{*}_X\omega^{n}}\big)\wedge(\pi^{*}_X\omega+dd^{c}\Phi)^{[n]}.
\end{align*}
Notice that the first integral in the first equality is zero: Indeed, if $\Phi=\Phi^\epsilon$ is an $\epsilon$-geodesic, then
\begin{align*}
&\int_{\hat{X}}\hat{f}d\log\big(\frac{e^{\Psi}}{\pi^{*}_X\omega^{n}}\big)\wedge d^c \uu(m_{\Phi^\epsilon})\wedge (\pi^{*}_X\omega+dd^{c}\Phi^{\epsilon})^{[n]}\\
=&\int_{\hat{X}}\hat{f}\langle (d\uu)(m_{\Phi^\epsilon}),(d^c\Psi)_{\tor}-2m_{\Ric(\omega)}\rangle(\pi^{*}_X\omega+dd^{c}\Phi^{\epsilon})^{[n+1]}\\
=&\epsilon\int_{\hat{X}}\hat{f}\langle (d\uu)(m_{\Phi^\epsilon}),(d^c\Psi)_{\tor}-2m_{\Ric(\omega)}\rangle\Big(\pi^{*}_X\omega+\frac{\sqrt{-1}d\tau\wedge d\bar{\tau}}{2|\tau|^{2}}\Big)^{[n+1]},
\end{align*}
since $\Phi^\epsilon\searrow\Phi$ in $C^{1,1}$ topology, passing to the limit as $\epsilon\to 0$, yields
\begin{equation*}
\int_{\hat{X}}\hat{f}d\log\big(\frac{e^{\Psi}}{\pi^{*}_X\omega^{n}}\big)\wedge d^c \uu(m_{\Phi})\wedge (\pi^{*}_X\omega+dd^{c}\Phi)^{[n]}=0.
\end{equation*}
It follows that,
\begin{align*}
\langle dd^{c}\mathcal{H}^{\Psi}_{\uu},f\rangle=&\int_{\hat{X}}\hat{f}\uu(m_{\Phi})dd^{c}\Psi\wedge(\pi^{*}_X\omega+dd^{c}\Phi)^{[n]}\\
&+2\int_{\hat{X}}\hat{f}\uu(m_{\Phi})\pi^{*}_X\Ric(\omega)\wedge(\pi^{*}_X\omega+dd^{c}\Phi)^{[n]}
\end{align*} 
Combining the above equality with \eqref{eq-b} completes the proof.
\end{proof}

Following \cite{BB}, we consider the following modified version of the $(\uu,\vv)$-Mabuchi functional
\begin{equation}\label{Mab-Psi}
\mathcal{M}^{\Psi}_{\uu,\vv}:=\mathcal{H}^{\Psi}_{\uu}-2\mathcal{E}^{\Ric(\omega)}_{\uu}+\mathcal{E}_{\vv}.
\end{equation}
Notice that for $\Psi:=\log(\omega+dd^{c}\phi_\tau)^{n}$ we have $\mathcal{M}^{\Psi}_{\uu,\vv}=\mathcal{M}_{\uu,\vv}$.

\begin{cor}\label{lem-ddcM-weak}
Under the hypothesis of \Cref{ddc-H}, if $\Psi$ is only locally bounded and $dd^c\Psi\geq 0$ as a current, then  
\begin{equation*}
dd^{c}\mathcal{M}^{\Psi}_{\uu}(\phi_\tau)=\int_X\uu(m_{\Phi})dd^{c}\Psi\wedge\big(\pi^{*}_X\omega+dd^{c}\Phi\big)^{[n]},
\end{equation*}
in the weak sense of currents.
\end{cor}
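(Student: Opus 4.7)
The strategy is to reduce to the smooth case established in \Cref{ddc-H} via a regularization of $\Psi$. Since $\Psi$ is locally bounded on $\hat X$ with $dd^c\Psi\geq 0$, it is plurisubharmonic, and Bedford--Taylor theory renders the product $dd^c\Psi\wedge(\pi_X^*\omega+dd^c\Phi)^{[n]}$ meaningful as a positive Radon measure on $\hat X$ (recall that $\Phi$ is $\pi_X^*\omega$-plurisubharmonic and locally bounded). Since $\uu(m_\Phi)$ is a continuous positive function on $\hat X$ and $\pi_\mathbb{A}$ is proper, the fibre integral $\int_X \uu(m_\Phi)\,dd^c\Psi\wedge(\pi_X^*\omega+dd^c\Phi)^{[n]}$ is a well-defined positive Radon measure on the annulus $\mathbb{A}$, so the right-hand side of the claim makes sense.

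Next, I would invoke Demailly's regularization theorem \cite{Demaily} to produce a decreasing sequence $\Psi_k$ of smooth plurisubharmonic functions on a neighborhood of $\hat X$ with $\Psi_k\searrow \Psi$ pointwise, then average over the compact group $\mathbb{G}$ to ensure each $\Psi_k$ is $\mathbb{G}$-invariant (averaging preserves smoothness, plurisubharmonicity, and monotone convergence). By \Cref{ddc-H} applied to each smooth $\Psi_k$, together with \Cref{lem-eq-ddc-func}\eqref{eq-a}, which gives $dd^c\mathcal{E}_\vv\equiv 0$, one obtains
\begin{equation*}
dd^c\mathcal{M}^{\Psi_k}_{\uu,\vv}(\phi_\tau)=\int_X \uu(m_\Phi)\,dd^c\Psi_k\wedge(\pi_X^*\omega+dd^c\Phi)^{[n]}
\end{equation*}
in the weak sense of currents on $\mathbb{A}$. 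For the left-hand side, $\Psi_k$ is uniformly bounded above by $\Psi_1$ and converges monotonically to $\Psi$, so monotone convergence applied to \eqref{H-Psi} yields $\mathcal{H}^{\Psi_k}_{\uu}(\phi_\tau)\to \mathcal{H}^{\Psi}_{\uu}(\phi_\tau)$ pointwise in $\tau$ and in $L^1_{\loc}(\mathbb{A})$, whence $dd^c\mathcal{M}^{\Psi_k}_{\uu,\vv}\to dd^c\mathcal{M}^{\Psi}_{\uu,\vv}$ in the sense of distributions. For the right-hand side, the Bedford--Taylor continuity theorem under decreasing sequences of locally bounded plurisubharmonic functions ensures that $dd^c\Psi_k\wedge(\pi_X^*\omega+dd^c\Phi)^n\to dd^c\Psi\wedge(\pi_X^*\omega+dd^c\Phi)^n$ weakly as positive Radon measures on $\hat X$; multiplication by the continuous positive function $\uu(m_\Phi)$ and push-forward by the proper projection $\pi_\mathbb{A}$ both preserve weak convergence of positive measures, giving the desired convergence of the right-hand side and hence the identity.

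The main obstacle will be justifying the Bedford--Taylor convergence in combination with the $\epsilon$-geodesic approximation $\Phi^\epsilon\searrow \Phi$ already used in the proof of \Cref{ddc-H}. The cleanest way to disentangle the two limits is to run the argument of \Cref{ddc-H} with $\Psi$ replaced by the smooth $\Psi_k$ and $\Phi$ replaced by $\Phi^\epsilon$, then pass first $\epsilon\to 0$ for fixed $k$ (as in the smooth case), and finally $k\to\infty$; the intermediate identity $d(\uu(m_\Phi))\wedge d^c\hat f\wedge (\pi_X^*\omega+dd^c\Phi)^{[n]}=0$ used there does not involve $\Psi$ and is therefore unaffected by the regularization, while the $\mathbb{G}$-invariance of $\Psi_k$ ensures that the boundary-free integration by parts against the test function $\hat f=\pi_\mathbb{A}^*f$ remains valid at each stage. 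This yields the claimed formula in the distributional sense on $\mathbb{A}$, and positivity of the right-hand side as a Radon measure recovers, as a consequence, the subharmonicity of $\tau\mapsto \mathcal{M}^{\Psi}_{\uu,\vv}(\phi_\tau)$.
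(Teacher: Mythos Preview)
Your proposal is correct and follows essentially the same strategy as the paper: regularize $\Psi$ by smooth $\mathbb{G}$-invariant functions, apply \Cref{ddc-H} to each approximant, and pass to the limit. The only cosmetic difference is that the paper takes a uniformly bounded sequence $\Psi_j\to\Psi$ (a.e.\ on $X$, everywhere in $\tau$) and invokes dominated convergence together with \cite[Proposition~3.2]{Demaily}, whereas you use a decreasing sequence of smooth psh approximants from Demailly's regularization and appeal to monotone convergence on the left and Bedford--Taylor continuity on the right; your final paragraph on disentangling the $\epsilon$- and $k$-limits is extra care not needed once \Cref{ddc-H} is in hand for smooth $\Psi$.
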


\begin{proof}
Let $\Psi_j$ be a sequence of uniformly bounded, $\mathbb{G}$-invariant smooth functions on $\hat{X}$ such that $\Psi_j\to\Psi$ almost everywhere on $X$ and everywhere on $\mathbb{A}$. Using \Cref{ddc-H} we have 
\begin{equation*}
dd^{c}\mathcal{M}^{\Psi_j}_{\uu}(\phi_\tau)=\int_X\uu(m_{\Phi})dd^{c}\Psi_j\wedge\big(\pi^{*}_X\omega+dd^{c}\Phi\big)^{[n]}.
\end{equation*}
By the dominated convergence theorem (notice that $\uu(m_\Phi)$ is uniformly bounded), we can pass to the limit when $j\to \infty$ (see e.g. \cite[Proposition 3.2]{Demaily}).
\end{proof}

Now, we can use the arguments of Berman--Berndtsson in \cite{BB} to deduce the weak convexity of the $(\uu,\vv)$-Mabuchi energy along weak geodesic segments. We will need the following regularization result which is the main ingredient in the proof of Berman--Berndtsson for the weak convexity of the Mabuchi energy \cite[Theorem 3.3]{BB}.
\begin{prop}[\cite{BB}]\label{prop-BB}
Let $(\phi_t)_{t\in[0,1]}\in\mathcal{K}^{1,1}(X,\omega)^{\T}$ be a weak geodesic segment, and $\Phi$ the corresponding weak solution of \eqref{MA}. Let $\Psi:=\log(\pi^{*}_X\omega+dd^{c}\Phi)^{n}$.
\begin{enumerate}
\item\label{prop-BB-i} There exist a family of locally bounded $\mathbb{G}$-invariant functions $(\Psi_A)_{A>0}$ on $\hat{X}$, such that $dd^{c}\Psi_A\geq 0$ in the weak sens of currents, and $\Psi_A\searrow\Psi$ as $A\to\infty$.
\item\label{prop-BB-ii} For fixed $A>0$, there exist a family of $\mathbb{G}$-invariant functions $(\Psi_{k,A})_{A>0}$ on $\hat{X}$ with continuous dependence on $\tau\in\mathbb{A}$, such that the currents $T_{A,k}:=dd^{c}\Psi_{k,A}\wedge(\pi^{*}_X\omega+dd^{c}\Phi)^{n}$ are positive and $\Psi_{k,A}\to\Psi_A$ pointwise almost everywhere on $X$ and everywhere on $\tau$ as $k\to\infty$.
\end{enumerate}
\end{prop}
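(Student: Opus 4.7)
The plan is to adapt the regularization procedure of Berman--Berndtsson \cite{BB}, verifying at each step that the constructions can be carried out $\mathbb{G}$-equivariantly. Because $\Phi$ is $\mathbb{G}$-invariant and $\pi_X^*\omega$ descends from $X$, the density function $\Psi$ is itself $\mathbb{G}$-invariant; moreover the $C^{1,1}$-regularity of $\Phi$ ensures that the Monge--Amp\`ere density $(\pi_X^*\omega+dd^c\Phi)^n/\pi_X^*\omega^n$ is bounded above, so that $\Psi$ is upper-semicontinuous and bounded above on $\hat X$ (possibly equal to $-\infty$ on a pluripolar set where the fiberwise Monge--Amp\`ere measure vanishes).

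For item \ref{prop-BB-i}, I would follow the BB construction realizing $\Psi$ as a decreasing limit of bounded $dd^c$-nonnegative functions. The key input is that $\Phi$ itself is $\pi_X^*\omega$-plurisubharmonic on $\hat X$, which allows one to produce an envelope/Bergman-kernel-type family $\Psi_A$ (obtained intrinsically from $\Phi$ and $\pi_X^*\omega$) satisfying $\Psi_A \geq \Psi$, $dd^c\Psi_A \geq 0$, and $\Psi_A \searrow \Psi$ as $A \to \infty$. No additional ingredient from our weighted setting enters this step. $\mathbb{G}$-invariance comes for free because $\Phi$, $\pi_X^*\omega$ and $\hat X$ are $\mathbb{G}$-invariant under a holomorphic action of a compact group; if needed one averages over $\mathbb{G}$, which preserves both $dd^c \geq 0$ and pointwise decrease.

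For item \ref{prop-BB-ii}, with $A$ fixed, I would regularize $\Psi_A$ in the $\tau$ direction by convolution $\Psi_{k,A}(\cdot,\tau) := \Psi_A(\cdot,\cdot) \ast \rho_{1/k}(\tau)$ against a rotation-invariant smooth mollifier $\rho_{1/k}$ supported in a ball of radius $1/k$ in $\C$, extending $\Psi_A$ slightly past the boundary of $\mathbb{A}$ in a $\mathbb{G}$-invariant way. The radial choice of $\rho_{1/k}$ and the fact that the $\mathbb{S}^1$ factor of $\mathbb{G}$ acts on $\mathbb{A}$ by rotation ensure that $\Psi_{k,A}$ remains $\mathbb{G}$-invariant, while the properties of mollifiers yield continuous (indeed smooth) dependence on $\tau$ and the claimed pointwise convergence $\Psi_{k,A} \to \Psi_A$ as $k \to \infty$.

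The main obstacle is the positivity of $T_{A,k} = dd^c\Psi_{k,A} \wedge (\pi_X^*\omega+dd^c\Phi)^n$. This is the crucial content of the BB proof, and it relies on the homogeneous Monge--Amp\`ere equation $(\pi_X^*\omega+dd^c\Phi)^{n+1} = 0$: wedging against $(\pi_X^*\omega+dd^c\Phi)^n$ kills the $d\tau\wedge d\bar\tau$ component of $dd^c\Psi_{k,A}$, leaving only the fiberwise $dd^c$ contributions, which are nonnegative thanks to the plurisubharmonicity secured in \ref{prop-BB-i} and preserved by convolution in $\tau$. Once this positivity is in hand, Corollary \ref{lem-ddcM-weak} applies to each $\Psi_{k,A}$ and a double passage to the limit (first $k \to \infty$, then $A \to \infty$) using dominated convergence and the uniform boundedness of $\uu(m_\Phi)$ yields the desired weak convexity of $\mathcal{M}_{\uu,\vv}$ along $(\phi_\tau)$; the weight $\uu$ enters only as a positive multiplier on an already positive current and causes no new difficulty.
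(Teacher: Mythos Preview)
The paper does not supply its own proof of this proposition: it is quoted from \cite{BB} (see the sentence immediately preceding the statement) and used as a black box in the proof of Theorem~\ref{Mab-conv}. There is therefore nothing in the paper to compare your sketch against.

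A few remarks on your sketch as an attempted reconstruction of the Berman--Berndtsson argument. For \ref{prop-BB-i}, ``envelope/Bergman-kernel-type family'' is too vague to constitute a proof. The actual content is the local statement that $\log\det(\varphi_{j\bar k})$ is plurisubharmonic whenever $\varphi$ is a local $C^{1,1}$ psh potential satisfying $(dd^c\varphi)^{n+1}=0$; this is established in \cite{BB} via Demailly-type approximation of $\varphi$ by smooth psh functions combined with Bergman-kernel asymptotics. The homogeneous Monge--Amp\`ere equation enters precisely here (to control the limit of the approximants), not in \ref{prop-BB-ii} as you suggest. Once local plurisubharmonicity of $\Psi$ is known, one may take $\Psi_A$ to be a regularized $\max(\Psi,-A)$.

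For \ref{prop-BB-ii}, your convolution-in-$\tau$ proposal is a plausible alternative, but your justification of $T_{A,k}\geq 0$ is confused. The HCMA plays no role at this step: since translation in the base variable is biholomorphic, averaging the psh function $\Psi_A$ over translations yields a psh $\Psi_{k,A}$, and then $T_{A,k}\geq 0$ is simply the positivity of a Bedford--Taylor product against a closed positive current with locally bounded potential. Your claim that wedging with $(\pi_X^*\omega+dd^c\Phi)^n$ ``kills the $d\tau\wedge d\bar\tau$ component'' is not the mechanism at work. Note also that $\mathbb A$ is not translation-invariant in $\tau$, so ``extending $\Psi_A$ slightly past the boundary'' is not innocuous; it is cleaner to pass to the strip coordinate $w=-\log\tau$ and mollify in $\mathrm{Re}\,w$, or to restrict to a compactly contained sub-annulus. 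Finally, your last paragraph drifts into the proof of Theorem~\ref{Mab-conv} rather than the proposition itself.
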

Using the above proposition together with \Cref{lem-ddcM-weak}, we get the following 
\begin{thm}\label{Mab-conv}
Let $(\phi_t)_{t\in[0,1]}\in\mathcal{K}^{1,1}(X,\omega)^{\T}$ be a weak geodesic segment. The function $\tau\mapsto\mathcal{M}_{\uu,\vv}(\phi_\tau)$ is weakly subharmonic on $\mathbb{A}$. In particular, $\mathcal{M}_{\uu,\vv}(\phi_t)$ is weakly convex along the weak geodesic $(\phi_t)$. 
\end{thm}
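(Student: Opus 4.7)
The plan is to combine the Chen--Tian decomposition \eqref{C-T}, the distributional formula of \Cref{lem-ddcM-weak}, and the two-step regularization of the density $\Psi:=\log(\pi^{*}_X\omega+dd^{c}\Phi)^{n}$ provided by \Cref{prop-BB}. For this choice of $\Psi$ one has $\mathcal{M}^{\Psi}_{\uu,\vv}(\phi_\tau)=\mathcal{M}_{\uu,\vv}(\phi_\tau)$, so it suffices to prove that $\tau\mapsto\mathcal{M}^{\Psi}_{\uu,\vv}(\phi_\tau)$ is weakly subharmonic on $\mathbb{A}$.

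First, I would apply part (i) of \Cref{prop-BB} to produce a family $(\Psi_A)_{A>0}$ of locally bounded $\mathbb{G}$-invariant functions on $\hat{X}$ satisfying $dd^{c}\Psi_A\geq 0$ and $\Psi_A\searrow\Psi$. Each $\Psi_A$ then meets the hypothesis of \Cref{lem-ddcM-weak}, yielding
\begin{equation*}
dd^{c}\mathcal{M}^{\Psi_A}_{\uu,\vv}(\phi_\tau)=\int_X \uu(m_{\Phi})\,T_A,\qquad T_A:=dd^{c}\Psi_A\wedge(\pi_X^*\omega+dd^c\Phi)^{n},
\end{equation*}
in the weak sense of currents on $\mathbb{A}$. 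To show the right-hand side is a non-negative Radon measure, I would invoke part (ii) of \Cref{prop-BB}: the approximating family $\Psi_{k,A}$, depending continuously on $\tau$, produces positive currents $T_{A,k}=dd^{c}\Psi_{k,A}\wedge(\pi_X^*\omega+dd^c\Phi)^{n}$ with $\Psi_{k,A}\to\Psi_A$, and the weak convergence $T_{A,k}\to T_A$ of currents forces $T_A\geq 0$ as a weak limit of positive currents. Combined with the hypothesis $\uu>0$ on $\PP_\alpha$ and the fact that $m_\Phi$ takes values in $\PP_\alpha$ (by \Cref{lem-polytope}), this gives that $\tau\mapsto\mathcal{M}^{\Psi_A}_{\uu,\vv}(\phi_\tau)$ is weakly subharmonic on $\mathbb{A}$ for every $A>0$.

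The final step is to pass $A\to\infty$. Because $\Psi_A\searrow\Psi$ and $\uu(m_{\phi_\tau})\omega_{\phi_\tau}^{[n]}$ is a fixed bounded measure for each $\tau$, monotone convergence applied to \eqref{H-Psi} gives $\mathcal{H}^{\Psi_A}_\uu(\phi_\tau)\searrow\mathcal{H}_\uu(\phi_\tau)$ pointwise in $\tau$; the remaining terms of $\mathcal{M}^{\Psi_A}_{\uu,\vv}$ do not involve $A$. Hence $\mathcal{M}^{\Psi_A}_{\uu,\vv}(\phi_\tau)\searrow\mathcal{M}_{\uu,\vv}(\phi_\tau)$, and a decreasing limit of subharmonic functions is subharmonic, provided it is not identically $-\infty$; this is assured because $\phi_0,\phi_1$ are smooth so $\mathcal{M}_{\uu,\vv}(\phi_0),\mathcal{M}_{\uu,\vv}(\phi_1)$ are finite. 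The convexity in $t$ along the geodesic $(\phi_t)$ then follows from the $\mathbb{S}^{1}$-invariance of $\Phi$: an $\mathbb{S}^{1}$-invariant subharmonic function on $\mathbb{A}$ is exactly a convex function of $t=-\log|\tau|$.

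The main obstacle is the compatibility of these two successive limit passages with the distributional identities. On the one hand, the weak convergence $T_{A,k}\to T_A$ requires uniform local $L^\infty$-bounds on $\Psi_{k,A}$ together with a Bedford--Taylor type continuity theorem, both built into the construction underlying \Cref{prop-BB}. On the other, the convergence $\mathcal{H}^{\Psi_A}_\uu\to\mathcal{H}_\uu$ must be strong enough to preserve subharmonicity under decreasing limits; this is delicate because $\Psi$ is only upper semicontinuous, but the uniform positivity and boundedness of $\uu$ on the compact polytope $\PP_\alpha$ together with the monotonicity $\Psi_A\searrow\Psi$ make monotone/dominated convergence applicable. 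The strict positivity of $\uu$ on $\PP_\alpha$ is the crucial ingredient that lets the weighted argument parallel the unweighted cscK case of \cite{BB}.
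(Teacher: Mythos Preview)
Your proposal is correct and follows essentially the same route as the paper: apply \Cref{lem-ddcM-weak} to the locally bounded $\Psi_A$ from \Cref{prop-BB}(i), use the positive approximants $T_{A,k}$ from \Cref{prop-BB}(ii) together with $\uu>0$ to conclude $dd^c\mathcal{M}^{\Psi_A}_{\uu,\vv}(\phi_\tau)\geq 0$, and then pass to the limit $A\to\infty$. Your presentation is slightly more explicit about the monotone convergence $\mathcal{H}^{\Psi_A}_\uu\searrow\mathcal{H}_\uu$ and the ``decreasing limit of subharmonic is subharmonic'' step, whereas the paper phrases the last step as convergence of distributions, but the substance is identical.
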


\begin{proof}
By \Cref{lem-ddcM-weak}, since the function $\Psi_A$ from \ref{prop-BB-i} in \Cref{prop-BB} is locally bounded, we obtain
\begin{equation}
dd^{c}\mathcal{M}_{\uu,\vv}^{\Psi_A}(\phi_\tau)=\int_X u(m_\Phi)T_A.
\end{equation}
where $T_A:=dd^{c}\Psi_{A}\wedge(\pi^{*}_X\omega+dd^{c}\Phi)^{n}$. Now using the fact that $\uu(m_\Phi)T_{A,k}\geq 0$ are positive Radon measures which converges weakly to $\uu(m_\Phi)T_{A}$ as $k\to \infty$. It follows that $dd^{c}\mathcal{M}_{\uu,\vv}^{\Psi_A}(\phi_\tau)\geq 0$. On the other hand we have $\mathcal{M}_{\uu,\vv}^{\Psi_A}(\phi_\tau)\to\mathcal{M}_{\uu,\vv}(\phi_\tau)$ as $A\to\infty$. Thus, $dd^{c}\mathcal{M}_{\uu,\vv}(\phi_\tau)\geq 0$ in the weak sens of currents.
\end{proof}
To get the pointwise convexity of $t\mapsto\mathcal{M}_{\uu,\vv}(\phi_t)$, we have to show that it is continuoues. For the energy part $t\mapsto-2\mathcal{E}^{\Ric(\omega)}_{\uu}(\phi_t)+\mathcal{E}_{\vv}(\phi_t)$, it is clear from \eqref{ext-E-v} and \eqref{ext-E-u-theta} that it is a continuous function, since $t\to\phi_t$ is a continuous family. As in the case when $\uu \equiv 1$ on $\PP_\alpha$ (see \cite{BB}), it is not a priori clear that the entropy part $t\mapsto\mathcal{H}_{\uu}(\phi_t)$ is continuous. 
\begin{thm}
The $(\uu,\vv)$-Mabuchi energy $\mathcal{M}_{\uu,\vv}$ is continuous along weak geodesics and therefore convex in the pointwise sense.
\end{thm}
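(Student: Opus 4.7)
The plan is to establish continuity of $t\mapsto \mathcal{M}_{\uu,\vv}(\phi_t)$ along any weak geodesic $(\phi_t)_{t\in[0,1]}$ connecting smooth $\T$-invariant $\omega$-K\"ahler potentials; combined with the weak convexity obtained in \Cref{Mab-conv}, this yields pointwise convexity on $[0,1]$, since a continuous function whose distributional second derivative is non-negative is convex in the classical sense. By the Chen--Tian decomposition \eqref{C-T}, continuity of $\mathcal{M}_{\uu,\vv}(\phi_t)$ reduces to continuity of each of the three summands $\mathcal{H}_\uu(\phi_t)$, $\mathcal{E}^{\Ric(\omega)}_\uu(\phi_t)$ and $\mathcal{E}_\vv(\phi_t)$.

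Continuity of the two energy summands is immediate from the explicit representations \eqref{ext-E-v} and \eqref{ext-E-u-theta}: the integrands depend on $\phi_t$ through $\phi_t$ itself, through the smooth functions $\uu_{j,n}(m_{\phi_t})$, $\vv_{j,n}(m_{\phi_t})$ composed with the normalized momentum map, and through the bounded $(1,1)$-coefficients of $\omega_{\phi_t}$ wedged with fixed forms (including $\Ric(\omega)$). Since a weak geodesic is continuous in $t$ in the $C^{1,1}$-weak topology, so that $\phi_t\to\phi_{t_0}$ in $C^0$ and $\omega_{\phi_t}\to\omega_{\phi_{t_0}}$ as currents with uniformly bounded coefficients, a standard bounded-convergence argument delivers continuity of each energy.

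The real obstacle is continuity of the entropy $\mathcal{H}_\uu(\phi_t)=\Ent_{\mu_\omega}(\mu_\uu(\phi_t))+c(\alpha,\uu)$. Lower-semicontinuity follows from the classical variational characterization of relative entropy as a supremum of continuous affine functionals, together with the fact that $\uu>0$ on $\PP_\alpha$ keeps the measures $\mu_\uu(\phi_t)$ well-defined probability measures varying weakly in $t$. For upper-semicontinuity I would mimic the strategy of \cite{BB}: by \Cref{Mab-conv}, $\tau\mapsto \mathcal{M}_{\uu,\vv}(\phi_\tau)$ is weakly subharmonic on $\mathbb{A}$, hence admits a unique upper-semicontinuous representative $\mathcal{M}^*$ coinciding with $\mathcal{M}_{\uu,\vv}(\phi_\tau)$ a.e. Combining this with the lsc of $\mathcal{M}_{\uu,\vv}(\phi_\tau)$ (from that of $\mathcal{H}_\uu$ plus the continuity of the energies) and with the fact that $\mathcal{M}^*$ must agree with $\mathcal{M}_{\uu,\vv}$ at the smooth boundary $\partial\mathbb{A}$, one identifies $\mathcal{M}^*$ with $\mathcal{M}_{\uu,\vv}(\phi_\tau)$ everywhere. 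The quantitative input for this identification is the approximation of $(\phi_t)$ by the smooth $\epsilon$-geodesics $(\phi^\epsilon_t)$ of Chen \cite{Chen0}, decreasing to $\phi_t$ in weak $C^{1,1}$ by the uniform estimates of \cite{Blocki, CTW}, together with the pointwise convergence $\mathcal{H}_\uu(\phi^\epsilon_t)\to \mathcal{H}_\uu(\phi_t)$ as $\epsilon\to 0$, which is obtained using the uniform upper bound on the Monge--Amp\`ere densities $\rho^\epsilon_t:=\omega^n_{\phi^\epsilon_t}/\omega^n$, the bound $x\log x\geq -e^{-1}$, and a dominated-convergence argument.

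The step I expect to be the most delicate is precisely this passage to the limit of $\mathcal{H}_\uu(\phi^\epsilon_t)$, because $\rho^\epsilon_t$ is bounded below only by a constant of order $\epsilon$ and hence $\log\rho^\epsilon_t$ is not uniformly bounded below; the saving grace, as in \cite{BB}, is that $x\log x\geq -e^{-1}$ so that $\rho^\epsilon_t\log\rho^\epsilon_t$ is uniformly bounded, and the weak convergence of Monge--Amp\`ere measures together with the continuity of $\uu$ on the fixed polytope $\PP_\alpha$ (provided by \Cref{lem-polytope}) allows one to conclude. Once $\mathcal{M}_{\uu,\vv}(\phi_t)$ is continuous, pointwise convexity on $[0,1]$ is an immediate consequence of \Cref{Mab-conv}.
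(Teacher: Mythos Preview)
Your treatment of the two energy terms and the lower-semicontinuity of the entropy is correct and matches the paper. The gap is in your upper-semicontinuity step.

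The argument via the usc representative $\mathcal{M}^*$ together with the $\epsilon$-geodesic approximation does not close. First, from lsc of $\mathcal{M}_{\uu,\vv}(\phi_\tau)$ and $\mathcal{M}_{\uu,\vv}=\mathcal{M}^*$ a.e.\ you only obtain $\mathcal{M}_{\uu,\vv}\leq \mathcal{M}^*$; the reverse inequality does not follow from general principles, and agreement at $\partial\mathbb{A}$ does not force interior agreement for a merely subharmonic $\mathcal{M}^*$. Second, the ``quantitative input'' you invoke is itself problematic: the convergence $\Phi^\epsilon\to\Phi$ is only in weak $C^{1,1}$, so the densities $\rho^\epsilon_t=\omega_{\phi^\epsilon_t}^{n}/\omega^{n}$ converge to $\rho_t$ only as measures (via $\Phi^\epsilon\searrow\Phi$), not pointwise a.e. Dominated convergence therefore cannot be applied to $\rho^\epsilon_t\log\rho^\epsilon_t$, and weak convergence of measures only gives $\liminf_{\epsilon}\mathcal{H}_\uu(\phi^\epsilon_t)\geq \mathcal{H}_\uu(\phi_t)$, which is again the wrong direction. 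Even granting pointwise convergence of $\mathcal{H}_\uu(\phi^\epsilon_t)$, a pointwise limit of continuous functions need not be continuous, and along $\epsilon$-geodesics you have no uniform bound of the type $dd^c\mathcal{M}_{\uu,\vv}(\phi^\epsilon_\tau)\geq -C$ (such a bound would require control on $\Ric(\omega_{\phi^\epsilon_t})$, i.e.\ third derivatives of $\phi^\epsilon_t$, which are not available).

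The paper circumvents exactly this difficulty by \emph{not} approximating the geodesic, but rather the integrand. Following \cite{BB}, one replaces $\log(\omega_{\phi_\tau}^n/\omega^n)$ by $\kappa_\epsilon\big(\log(e^{\Psi_{A,k}}/\omega^n)\big)$, where $\Psi_{A,k}$ is the Berman--Berndtsson regularization of \Cref{prop-BB} (continuous in $\tau$, with $T_{A,k}\geq 0$) and $\kappa_\epsilon$ is strictly convex with $\kappa_\epsilon'\geq 1$. A direct $dd^c$ computation along the \emph{fixed} weak geodesic then gives
\[
dd^c\mathcal{M}^{\Psi_{A,k}}_{\uu,\vv,j,\epsilon}(\phi_\tau)\ \geq\ -\,C_{\epsilon,j}\,\frac{\sqrt{-1}\,d\tau\wedge d\bar\tau}{2|\tau|^2},
\]
so that $t\mapsto \mathcal{M}^{\Psi_{A,k}}_{\uu,\vv,j,\epsilon}(\phi_t)+C_{\epsilon,j}t^2$ is weakly convex \emph{and} continuous (continuity coming from the continuity of $\Psi_{A,k}$ in $\tau$), hence pointwise convex. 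Passing to the limit in $k$, $A$, $\epsilon$ and summing over the partition of unity yields pointwise convexity of $t\mapsto \mathcal{M}_{\uu,\vv}(\phi_t)-\mathcal{E}_\vv(\phi_t)$ as a limit of convex functions; continuity on $(0,1)$ is then automatic, and continuity at the endpoints follows from the lsc of $\mathcal{H}_\uu$ that you already have. The essential point you are missing is this use of the $\Psi_{A,k}$-regularization (which keeps everything continuous in $\tau$ while still producing a positive current $T_{A,k}$) in place of the $\epsilon$-geodesic approximation.
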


\begin{proof}
The argument is very similar to the one of Berman--Berndtsson in \cite[Theorem 3.4]{BB}, the only difference is in the calculation of the second variation of the entropy term involving the weighted measure $\uu(m_{\phi_\tau})\omega_{\phi_\tau}^{[n]}$.

Let $\kappa_\epsilon(s)$ be a sequence of strictly convex functions such that $\kappa^{\prime}_\epsilon(s)\geq 1$ and $\kappa_\epsilon(s)\to s$ as $\epsilon\to 0$. Let $\zeta_j$ be a partition of unity subordinate to an open cover of $X$. We consider the following modification of the entropy term
\begin{equation*}
\mathcal{H}_{\uu,j,\epsilon}^{\Psi_{A,k}}(\phi_\tau)=\int_X \zeta_j\kappa_\epsilon\big(\log\big(\frac{e^{\Psi_{A,k}(\cdot,\tau)}}{\omega^{n}}\big)\big)\uu(m_{\phi_\tau})\omega_{\phi_\tau}^{[n]},
\end{equation*}
where $\Psi_{A,k}$ is given in \Cref{prop-BB} \ref{prop-BB-ii} (see also \cite[Theorem 3.3]{BB} for more details). From the calculations in the proof of \Cref{ddc-H} we have
\begin{align*}
dd^{c}\mathcal{H}_{\uu,j,\epsilon}^{\Psi_{A,k}}&(\phi_\tau)=\int_X \zeta_j\uu(m_{\phi_\tau})dd^{c}\kappa_\epsilon\big(\log\big(\frac{e^{\Psi_{A,k}}}{\omega^{n}}\big)\big)\wedge(\pi^{*}_X \omega+dd^c\Phi)^{[n]}\\
=&\int_X \zeta_j\uu(m_{\phi_\tau})d\Big(\kappa^{\prime}_\epsilon\big(\log\big(\frac{e^{\Psi_{A,k}}}{\omega^{n}}\big)\big)d^{c}\log\big(\frac{e^{\Psi_{A,k}}}{\omega^{n}}\big)\Big)\wedge(\pi^{*}_X \omega+dd^c\Phi)^{[n]}\\
=&\int_X \zeta_j\uu(m_{\phi_\tau})\kappa^{\prime}_\epsilon\big(\log\big(\frac{e^{\Psi_{A,k}}}{\omega^{n}}\big)\big)dd^{c}\log\big(\frac{e^{\Psi_{A,k}}}{\omega^{n}}\big)\wedge(\pi^{*}_X \omega+dd^c\Phi)^{[n]}\\
&+\int_X \zeta_j\uu(m_{\phi_\tau})\kappa^{\prime\prime}_\epsilon\big(\log\big(\frac{e^{\Psi_{A,k}}}{\omega^{n}}\big)\big)d\log\big(\frac{e^{\Psi_{A,k}}}{\omega^{n}}\big)\wedge d^{c}\log\big(\frac{e^{\Psi_{A,k}}}{\omega^{n}}\big)\wedge(\pi^{*}_X \omega+dd^c\Phi)^{[n]},
\end{align*}
It follows that,
\begin{align}
\begin{split}\label{ddc-H-j-k-A}
dd^{c}\mathcal{H}_{\uu,j,\epsilon}^{\Psi_{A,k}}&(\phi_\tau)=\int_X \zeta_j\uu(m_{\phi_\tau})\kappa^{\prime}_\epsilon\big(\log\big(\frac{e^{\Psi_{A,k}}}{\omega^{n}}\big)\big)T_{A,k}\\
&+2\int_X \zeta_j\uu(m_{\phi_\tau})\kappa^{\prime}_\epsilon\big(\log\big(\frac{e^{\Psi_{A,k}}}{\omega^{n}}\big)\big)\pi^{*}_X\Ric(\omega)\wedge(\pi^{*}_X \omega+dd^c\Phi)^{[n]}\\
&+\int_X \zeta_j\uu(m_{\phi_\tau})\kappa^{\prime\prime}_\epsilon\big(\log\big(\frac{e^{\Psi_{A,k}}}{\omega^{n}}\big)\big)d\log\big(\frac{e^{\Psi_{A,k}}}{\omega^{n}}\big)\wedge d^{c}\log\big(\frac{e^{\Psi_{A,k}}}{\omega^{n}}\big)\wedge(\pi^{*}_X \omega+dd^c\Phi)^{[n]},
\end{split}
\end{align} 
where $T_{A,k}:=dd^{c}\Psi_{A,k}\wedge(\pi^{*}_X \omega+dd^c\Phi)^{[n]}$. Now we introduce the following modified version of the $(\uu,\vv)$-Mabuchi energy:
\begin{equation*}
\mathcal{M}_{\uu,\vv,j,\epsilon}^{\Psi_{A,k}}:=\mathcal{H}_{\uu,j,\epsilon}^{\Psi_{A,k}}-2\mathcal{E}_{\uu,j}^{\theta_j},
\end{equation*}
where $\theta_j:=\zeta_j\Ric(\omega)$.  Combining \eqref{ddc-H-j-k-A} with \eqref{eq-a} and \eqref{eq-b}, we obtain 
\begin{align*}
dd^c\mathcal{M}_{\uu,\vv,j,\epsilon}^{\Psi_{A,k}}&(\phi_\tau)=\int_X \zeta_j\uu(m_{\phi_\tau})\kappa^{\prime}_\epsilon\big(\log\big(\frac{e^{\Psi_{A,k}}}{\omega^{n}}\big)\big)T_{A,k}\\
&+2\int_X\Big[1-\kappa^{\prime}_\epsilon\big(\log\big(\frac{e^{\Psi_{A,k}}}{\omega^{n}}\big)\big)\Big] \zeta_j\uu(m_{\phi_\tau})\pi^{*}_X\Ric(\omega)\wedge(\pi^{*}_X \omega+dd^c\Phi)^{[n]}\\
&+\int_X \zeta_j\uu(m_{\phi_\tau})\kappa^{\prime\prime}_\epsilon\big(\log\big(\frac{e^{\Psi_{A,k}}}{\omega^{n}}\big)\big)d\log\big(\frac{e^{\Psi_{A,k}}}{\omega^{n}}\big)\wedge d^{c}\log\big(\frac{e^{\Psi_{A,k}}}{\omega^{n}}\big)\wedge(\pi^{*}_X \omega+dd^c\Phi)^{[n]}.
\end{align*}
Since $\kappa_{\epsilon}$ is strictly convex, the integral in the last line is positive, and using that $\kappa^{\prime}_\epsilon(s)\geq 1$, together with $T_{A,k}\geq 0$, it is also clear that the integral in the first line is positive. For the remaining integral we can bound it from below by $-C_{\epsilon,j}\frac{\sqrt{-1}d\tau\wedge d\bar{\tau}}{2|\tau|^2}$ for some $C_{\epsilon,j}\geq 0$. Thus,
\begin{equation*}
dd^c\mathcal{M}_{\uu,\vv,j,\epsilon}^{\Psi_{A,k}}(\phi_\tau)\geq-C_{\epsilon,j}\frac{\sqrt{-1}d\tau\wedge d\bar{\tau}}{2|\tau|^2}.
\end{equation*} 
It follows that the function $t\mapsto\mathcal{M}_{\uu,\vv,j,\epsilon}^{\Psi_{A,k}}(\phi_\tau)+C_{\epsilon,j}t^{2}$ (where $\tau=e^{-t+is}$) is weakly convex. On the other hand $\tau \mapsto\mathcal{M}_{\uu,\vv,j,\epsilon}^{\Psi_{A,k}}(\phi_\tau)$ is continuous since $\Psi_{A,k}$ is continuous in $\tau\in\mathbb{A}$, by \Cref{prop-BB} \ref{prop-BB-ii}. It follows that $t\mapsto\mathcal{M}_{\uu,\vv,j,\epsilon}^{\Psi_{A,k}}(\phi_t)+C_{\epsilon,j}t^{2}$ is convex in the pointwise sense. Using the equation, 
\begin{equation*}
\mathcal{M}_{\uu,\vv}(\phi_t)-\mathcal{E}_{\vv}(\phi_t)=\underset{\epsilon\to 0}{\lim}\sum_j \mathcal{M}_{\uu,\vv,j,\epsilon}^{\Psi_{A,k}}(\phi_\tau) +C_\epsilon t^{2},
\end{equation*}
where $C_\epsilon=\sum_j C_{j,\epsilon}$, we infer that $t\mapsto\mathcal{M}_{\uu,\vv}(\phi_t)-\mathcal{E}_{\vv}(\phi_t)$ is convex in the pointwise sense, thus continuous. By \eqref{Entropy}, the function $t\to \mathcal{H}_{\uu}(\phi_t)$ is lower semicontinuous, then it is continuous on $[0,1]$. This, completes the proof.
\end{proof}
\section{Proof of \Cref{thm-Mab-bound}}
\begin{lem}\label{lem-Ent}
Given a weak geodesic segment $(\phi_t)_{t\in[0,1]}\in\mathcal{K}^{1,1}(X,\omega)^{\T}$ connecting $\phi_0,\phi_1\in\mathcal{K}(X,\omega)^{\T}$, we have the following inequalities
\begin{align*}
\underset{t\rightarrow 0^{+}}{\lim}\dfrac{\mathcal{H}_{\uu}(\phi_t)-\mathcal{H}_{\uu}(\phi_0)}{t}\geq& -\int_X \tilde{\uu}(m_{\phi_0})\dot{\phi}\left(\Ric(\omega_{\phi_0})-\Ric(\omega)\right)\wedge \omega^{[n-1]}_{\phi_0}\\
&-\int_X\langle (d\tilde{\uu})(m_{\phi_0}),m_{\Ric(\omega_{\phi_0})}-m_{\Ric(\omega)}\rangle\dot{\phi}\omega_{\phi_0}^{[n]}\\
&-\int_X\dot{\phi}\Delta_{\phi_0}(\tilde{\uu}(m_{\phi_0}))\omega^{[n]}_{\phi_0}
\end{align*}
where $\dot{\phi}=\left.\frac{d\phi_t}{dt}\right|_{t=0^{+}}$ and $\tilde{\uu}:=\frac{\uu}{\vol(X,\uu(m_\omega)\omega^{[n]})}$. 
\end{lem}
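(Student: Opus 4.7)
The strategy is the same as for the cscK case treated in \cite{BB}, adapted to the weighted setting by carefully tracking the moment map dependence of $\uu(m_\phi)$. The inequality is of subgradient type: the right-hand side is the formal first variation $(d\mathcal{H}_{\tilde\uu})_{\phi_0}(\dot\phi)$, which is well-defined because $\phi_0$ is smooth, while the left-hand side is the one-sided slope of $\mathcal{H}_\uu$ along the weak geodesic. The plan has three steps.

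First, I would compute the first variation of $\mathcal{H}_\uu$ on a smooth family $(\phi_s)_{s\in[0,T]}\subset \mathcal{K}(X,\omega)^{\T}$ starting at $\phi_0$. Writing $H(\phi):=\log(\omega_\phi^n/\omega^n)$ so that $\mathcal{H}_\uu(\phi)=\int_X H(\phi)\uu(m_\phi)\omega_\phi^{[n]}$, the chain rule produces three contributions: $\frac{d}{ds}H(\phi_s)\vert_0=\Box_{\phi_0}\dot\phi_0$, $\frac{d}{ds}\uu(m_{\phi_s})\vert_0=\langle d\uu(m_{\phi_0}),d^c\dot\phi_0\rangle$, and $\frac{d}{ds}\omega_{\phi_s}^{[n]}\vert_0=dd^c\dot\phi_0\wedge\omega_{\phi_0}^{[n-1]}$. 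Integration by parts on the compact manifold $X$ (shifting all $dd^c$ off of $\dot\phi_0$), combined with the identity $dd^c H(\phi_0)=\Ric(\omega)-\Ric(\omega_{\phi_0})$ and the definition of the $\theta$-momentum map (\Cref{theta-mom}) applied to $\theta=\Ric(\omega)$ and $\theta=\Ric(\omega_{\phi_0})$, rearranges the derivative into precisely the three integrals on the RHS (the normalization $\tilde\uu=\uu/\vol(X,\uu(m_\omega)\omega^{[n]})$ being a harmless constant rescaling of $\uu$ which does not affect the structure of the identity).

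Second, I would pass from the smooth case to the weak geodesic via $\epsilon$-geodesics. For the weak geodesic $(\phi_t)$ with corresponding $\Phi\in \mathcal{K}^{1,1}(\hat X,\pi_X^*\omega)^{\mathbb G}$, the $\epsilon$-geodesic $(\phi_t^\epsilon)$ of Chen \cite{Chen0} is smooth in $(x,t)$ with $\Phi^\epsilon\searrow\Phi$ in $C^{1,1}$ and $\dot\phi_0^\epsilon\to\dot\phi$ as $\epsilon\to 0$. Applying Step 1 along the smooth $\epsilon$-geodesic and using the fundamental theorem of calculus,
\begin{equation*}
\mathcal{H}_\uu(\phi_t^\epsilon)-\mathcal{H}_\uu(\phi_0)=\int_0^t (d\mathcal{H}_\uu)_{\phi_s^\epsilon}(\dot\phi_s^\epsilon)\,ds.
\end{equation*}
By continuity of $\mathcal{H}_\uu$ along weak geodesics (the preceding theorem) the LHS converges to $\mathcal{H}_\uu(\phi_t)-\mathcal{H}_\uu(\phi_0)$. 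On the RHS, the smoothness of $\phi_0$, the $C^{1,1}$ convergence, and \Cref{lem-polytope} (which forces $m_{\phi_s^\epsilon}\in\PP_\alpha$ for all $\epsilon,s$ so that $\uu(m_{\phi_s^\epsilon})$, $d\uu(m_{\phi_s^\epsilon})$ and $m_{\Ric}$ stay uniformly bounded) permit a dominated convergence argument that identifies the limiting integrand at $s=0$ with the RHS of the Lemma. Dividing by $t$ and letting $t\to 0^+$ with the help of convexity of $\mathcal{H}_\uu-2\mathcal{E}^{\Ric(\omega)}_\uu$ along weak geodesics (coming from convexity of the Mabuchi functional plus affineness of $\mathcal{E}_\vv$ established in \Cref{ddc-Ev}) yields the claimed lower bound.

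The main obstacle is the justification of the limit $\epsilon\to 0$ in Step 2: the $\epsilon$-geodesic is only $C^{1,1}$-close to the weak geodesic, and one needs the formal first variation formula from Step 1 to stabilize in this weak topology. This is where the smoothness of $\phi_0$ is essential, since at the endpoint the reference potential, the moment map $m_{\phi_0}$, the Ricci form $\Ric(\omega_{\phi_0})$, and the Laplacian $\Delta_{\phi_0}(\tilde\uu(m_{\phi_0}))$ are all genuine smooth objects, so Step 1's expression for $(d\mathcal{H}_\uu)_{\phi_0^\epsilon}(\dot\phi_0^\epsilon)$ passes to the limit as the pairing of smooth test data with the $C^{0}$-converging velocities $\dot\phi_0^\epsilon\to\dot\phi$. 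The passage to the limit on the interior $s>0$ uses the uniform boundedness of $\uu$ and its derivatives on $\PP_\alpha$ together with the convexity of the functional, so that only the right-derivative (not the $s>0$ values) controls the slope, and this is why the conclusion is an inequality rather than an equality.
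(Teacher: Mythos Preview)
Your approach has a genuine gap, and it differs substantially from the paper's argument. The paper does \emph{not} use $\epsilon$-geodesics, nor does it invoke the convexity of $\mathcal{M}_{\uu,\vv}$ along weak geodesics. Instead it exploits the elementary convexity of the relative entropy $\mathrm{Ent}_{\mu_\omega}(\cdot)$ with respect to the \emph{affine} structure on probability measures (the Gibbs inequality): for absolutely continuous probability measures $\mu_0,\mu_1$,
\[
\mathrm{Ent}_{\mu_\omega}(\mu_1)-\mathrm{Ent}_{\mu_\omega}(\mu_0)\ \geq\ \int_X \log\!\Big(\frac{d\mu_0}{d\mu_\omega}\Big)\,(d\mu_1-d\mu_0).
\]
Taking $\mu_i=\mu_{\uu}(\phi_i)$ and dividing by $t$ gives the desired subgradient inequality \emph{for every} $t>0$ directly on the weak geodesic, no approximation needed. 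The paper then splits $\log(d\mu_{\uu}(\phi_0)/d\mu_\omega)=\log(\omega_{\phi_0}^n/\omega^n)+\log(\tilde\uu(m_{\phi_0}))$, uses that $\int_X \log(\tilde\uu(m_{\phi_t}))\tilde\uu(m_{\phi_t})\omega_{\phi_t}^{[n]}$ is independent of $t$ (a push-forward to $\PP_\alpha$), lets $t\to 0^+$, and integrates by parts using only the smoothness of $\phi_0$.

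Your route fails at two concrete places. First, the sentence ``by continuity of $\mathcal{H}_\uu$ along weak geodesics the LHS converges'' misapplies that theorem: it asserts continuity in $t$ along a \emph{fixed} weak geodesic, not continuity of $\mathcal{H}_\uu(\phi_t^\epsilon)$ as $\epsilon\to 0$. Entropy is only lower semicontinuous under the weak convergence of measures furnished by the $C^{1,1}$ bounds, and lower semicontinuity gives the inequality in the wrong direction. Second, the ``dominated convergence'' on the RHS cannot work as stated: the first-variation integrand $(d\mathcal{H}_\uu)_{\phi_s^\epsilon}(\dot\phi_s^\epsilon)$ contains $\Ric(\omega_{\phi_s^\epsilon})$ and $\Delta_{\phi_s^\epsilon}(\uu(m_{\phi_s^\epsilon}))$, which are fourth- and second-order in $\phi_s^\epsilon$ and do not stabilise under the available $C^{1,1}$ control for $s>0$. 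Finally, invoking convexity of $\mathcal{H}_\uu-2\mathcal{E}_\uu^{\Ric(\omega)}$ only tells you the one-sided slope exists; it does not identify that slope with the smooth first variation, which is precisely the content of the lemma. The missing idea is the Gibbs inequality above.
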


\begin{proof}
By convexity of the entropy with respect to the affine structure on the space of probability measures (see e.g. \cite{BB, DZ}) and using \eqref{Entropy}, we have  
\begin{align*}
&\dfrac{\mathcal{H}_{\uu}(\phi_t)-\mathcal{H}_{\uu}(\phi_0)}{t}=\dfrac{\Ent_{\mu_\omega}(\mu_{\uu}(\phi_t))-\Ent_{\mu_\omega}(\mu_{\uu}(\phi_0))}{t}\\
\geq&\int_X\log\Big(\frac{\mu_{\uu}(\phi_0)}{\mu_\omega}\Big)\dfrac{\mu_{\uu}(\phi_t)-\mu_{\uu}(\phi_0)}{t}\\
=&\int_X\log\Big(\frac{\omega^{n}_{\phi_0}}{\mu_\omega}\Big)\dfrac{\mu_{\uu}(\phi_t)-\mu_{\uu}(\phi_0)}{t}+\int_X\log(\tilde{\uu}(m_{\phi_0}))\dfrac{\mu_{\uu}(\phi_t)-\mu_{\uu}(\phi_0)}{t}\\
=&\int_X\log\Big(\frac{\omega^{n}_{\phi_0}}{\mu_\omega}\Big)\dfrac{\mu_{\uu}(\phi_t)-\mu_{\uu}(\phi_0)}{t}+\int_X\frac{1}{t}\left(\log(\tilde{\uu}(m_{\phi_0}))\tilde{\uu}(m_{\phi_t})\omega^{[n]}_{\phi_t}-\log(\tilde{\uu}(m_{\phi_0}))\tilde{\uu}(m_{\phi_0})\omega^{[n]}_{\phi_0}\right)\\
=&\int_X\log\Big(\frac{\omega^{n}_{\phi_0}}{\mu_\omega}\Big)\dfrac{\mu_{\uu}(\phi_t)-\mu_{\uu}(\phi_0)}{t}+\int_X\frac{1}{t}\left(\log(\tilde{\uu}(m_{\phi_0}))-\log(\tilde{\uu}(m_{\phi_t}))\right)\tilde{\uu}(m_{\phi_t})\omega^{[n]}_{\phi_t}
\end{align*}
where we have used the fact that $$\int_X\log(\tilde{\uu}(m_{\phi_t}))\tilde{\uu}(m_{\phi_t})\omega^{[n]}_{\phi_t}=\int_X\log(\tilde{\uu}(m_{\phi_0}))\tilde{\uu}(m_{\phi_0})\omega^{[n]}_{\phi_0}={\rm const}$$ 
is a constant independent of $t$. We thus compute
\begin{align*}
&\underset{t\rightarrow 0^{+}}{\lim}\dfrac{\mathcal{H}_{\uu}(\phi_t)-\mathcal{H}_{\uu}(\phi_0)}{t}\\
\geq&-\int_X\tilde{\uu}(m_{\phi_0})d\dot{\phi}\wedge d^{c}\log\Big(\frac{\omega^{n}_{\phi_0}}{\omega^{n}}\Big)\wedge \omega^{[n-1]}_{\phi_0}+\int_X\dot{\phi}dd^{c}(\tilde{\uu}(m_{\phi_0}))\wedge\omega^{[n-1]}_{\phi_0}\\
=&\int_X\dot{\phi}\Big(d(\tilde{\uu}(m_{\phi_0})),d\log\Big(\frac{\omega^{[n]}_{\phi_0}}{\omega^n}\Big)\Big)_{\phi_0}\omega^{[n]}_{\phi_0}+\int_X\tilde{\uu}(m_{\phi_0})\dot{\phi}dd^{c}\log\Big(\frac{\omega^{n}_{\phi_0}}{\omega^{n}}\Big)\wedge \omega^{[n-1]}_{\phi_0}\\&+\int_X\dot{\phi}dd^{c}(\tilde{\uu}(m_{\phi_0}))\wedge\omega^{[n-1]}_{\phi_0}   \\
=&-\int_X\tilde{\uu}(m_{\phi_0})\dot{\phi}\left(\Ric(\omega_{\phi_0})-\Ric(\omega)\right)\wedge \omega^{[n-1]}_{\phi_0}-\int_X\langle (d\tilde{\uu})(m_{\phi_0}),m_{\Ric(\omega_{\phi_0})}-m_{\Ric(\omega)}\rangle\dot{\phi}\omega_{\phi_0}^{[n]}\\
&-\int_X\dot{\phi}\Delta_{\phi_0}(\tilde{\uu}(m_{\phi_0}))\omega^{[n]}_{\phi_0}.
\end{align*}
\end{proof}
Now we are in position to give the proof of \Cref{thm-Mab-bound}.
\begin{proof}[Proof of \Cref{thm-Mab-bound}]
Let $(\phi_t)_{t\in[0,1]}\in\mathcal{K}^{1,1}(X,\omega)^{\T}$ be a weak geodesic segment connecting $\phi_0,\phi_1\in\mathcal{K}(X,\omega)^{\T}$. We suppose that $\tilde{\uu}:=\frac{\uu}{\vol(X,\uu(m_\omega)\omega^{[n]})}=\uu$. We have
\begin{align*}
&\underset{t\rightarrow 0^{+}}{\lim}\dfrac{\mathcal{E}_{\vv}(\phi_t)-\mathcal{E}_{\vv}(\phi_0)}{t}=\int_X\dot{\phi}\vv(m_{\phi_0})\omega_{\phi_0}^{[n]}\\
&\underset{t\rightarrow 0^{+}}{\lim}\dfrac{\mathcal{E}^{\Ric(\omega)}_{\uu}(\phi_t)-\mathcal{E}^{\Ric(\omega)}_{\uu}(\phi_0)}{t}=\int_X\dot{\phi}\big(\uu(m_{\phi_0})\Ric(\omega)\wedge\omega^{[n-1]}_{\phi_0}+\langle(d\uu)(m_{\phi_0}),m_{\Ric(\omega)}\rangle\omega^{[n]}_{\phi_0}\big)
\end{align*} 
By \Cref{lem-Ent} and \Cref{Chen-Tian} we get
\begin{align*}
\underset{t\rightarrow 0^{+}}{\lim}\dfrac{\mathcal{M}_{\uu,\vv}(\phi_t)-\mathcal{M}_{\uu,\vv}(\phi_0)}{t}\geq&\int_X(-\Scal_{\uu}(\phi_0)+\vv(m_{\phi_0}))\dot{\phi}\omega_{\phi_0}^{[n]}.
\end{align*}
Using the sub-slop inequality for the convex function $\mathcal{M}_{\uu,\vv}(\phi_t)$ we get
\begin{align*}
\mathcal{M}_{\uu,\vv}(\phi_1)-\mathcal{M}_{\uu,\vv}(\phi_0)\geq& \underset{t\rightarrow 0^{+}}{\lim}\dfrac{\mathcal{M}_{\uu,\vv}(\phi_t)-\mathcal{M}_{\uu,\vv}(\phi_0)}{t}\\
\geq&\int_X(-\Scal_{\uu}(\phi_0)+\vv(m_{\phi_0}))\dot{\phi}\omega_{\phi_0}^{[n]}.
\end{align*} 
By Cauchy-Schwartz inequality we obtain
\begin{equation*}
\mathcal{M}_{\uu,\vv}(\phi_1)-\mathcal{M}_{\uu,\vv}(\phi_0)\geq -d(\phi_1,\phi_0)\parallel\Scal_{\uu}(\phi_0)-\vv(m_{\phi_0})\parallel_{L^{2}(X,\mu_{\phi_0})}.
\end{equation*} 
For the general case where $\tilde{\uu}\neq\uu$, we apply the above formula to the $(\tilde{\uu},\frac{\vv}{\vol(X,\uu(m_\omega)\omega^{[n]})})$-Mabuchi energy.
\end{proof}

\section{Uniqueness of weighted cscK metrics}
This section is devoted to establish \Cref{Thm-uniq} from the introduction. We will generalise the approach of \cite{BB, CPZ} to the weighted setting. Our proof is closer to the method used by Chen--Paun--Zeng \cite{CPZ}, based on a generalisation of the bifurcation technique of Bando--Mabuchi \cite{BM}.
\begin{prop}\label{Thm-Def}
Let $X$ be a compact K\"ahler manifold with K\"ahler class $\alpha$, $\T\subset\Autred$ a real torus with momentum polytope $\PP_\alpha\subset \tor^*$ and $\uu\in C^\infty(\PP_\alpha,\mathbb{R}_{>0})$, and $\vv\in C^\infty(\PP_\alpha,\mathbb{R}_{>0})$ a non vanishing function on $\PP_\alpha$. If $\omega\in\alpha$ is a $\T$-invariant K\"ahler metric, and $\varphi_0\in \mathcal{K}(X,\omega)^{\T}$ such that $\omega_{\varphi_0}\in\alpha$ a $(\uu,\vv)$-extremal metric. Then, there exist $\omega_{\phi_0}$ in the orbit of $\omega_{\varphi_0}$ under the action of the group $G:={\rm Aut}_{\rm red}^{\T}(X)^{\circ}$, and a smooth function $\phi:[0,\epsilon)\times X\to \mathbb{R}$, such that $\phi_t:=\phi(t,\cdot)\in\mathcal{K}(X,\omega)^{\T}$ satisfies the equation
\begin{equation}\label{eq-uu-vv-ext-twist}
\Scal_{\uu}(\phi_t)-t\big(\uu(m_{\phi_t})\Lambda_{\phi_t}\omega+\big\langle(d\uu)(m_{\phi_t}),m_\omega\big\rangle\big)=\ell_{\rm ext}(m_{\phi_t})\vv(m_{\phi_t}),
\end{equation}
where $\Lambda_{\phi}\omega$ is the trace of $\omega$ with respect to $\omega_\phi$ and $\ell_{\rm ext}$ is the $(\uu,\vv)$-extremal affine linear function of $(\alpha,\T,\uu,\vv)$.
\end{prop}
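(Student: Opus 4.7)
\emph{Plan.} The plan is to apply the implicit function theorem to the smooth map
\[
F(\phi,t) := \Scal_{\uu}(\phi) - tN(\phi) - \ell_{\mathrm{ext}}(m_\phi)\vv(m_\phi), \qquad N(\phi) := \uu(m_\phi)\Lambda_\phi\omega + \langle (d\uu)(m_\phi), m_\omega\rangle,
\]
viewed on a neighbourhood of $(\varphi_0,0)$ in $\mathcal K^{k+4,\alpha}(X,\omega)^\T \times [0,\epsilon)$ with values in $C^{k,\alpha}(X)^\T$. The hypothesis on $\omega_{\varphi_0}$ gives $F(\varphi_0, 0) = 0$, and we look for a smooth curve $(\phi_t)$ with $\phi_0 \in G\cdot\varphi_0$ solving $F(\phi_t, t) = 0$ for $t\in[0,\epsilon)$. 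The linearization $L_0 := D_\phi F(\varphi_0, 0)$ is the weighted Lichnerowicz operator of \cite{lahdili3}: a fourth-order elliptic operator, formally self-adjoint with respect to $\langle f_1, f_2\rangle_\uu := \int_X f_1 f_2 \,\uu(m_{\varphi_0})\omega_{\varphi_0}^{[n]}$, whose $\T$-invariant kernel $K_0$ is canonically identified with $\mathfrak{g} := \Lie(G)$ via the $\omega_{\varphi_0}$-momentum map. In particular $L_0$ is Fredholm of index zero, and $L_0\vert_{K_0^\perp}$ is an isomorphism onto its image $K_0^\perp$ (in the appropriate H\"older slice).

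\emph{Lyapunov--Schmidt on a $G$-slice.} Since $L_0$ is not invertible I would perform a Lyapunov--Schmidt reduction tailored to the $G$-action. Parametrize potentials near $\varphi_0$ by $(g, \psi) \in G \times S$, where $S := K_0^\perp \cap C^{k+4,\alpha}(X)^\T$, via $(g,\psi)\mapsto g\cdot\varphi_0 + \psi$, with $g\cdot\varphi_0$ denoting the normalized potential of $g^{*}\omega_{\varphi_0}$. Decompose $F = \pi_{K_0^\perp}F + \pi_{K_0}F$ under the $\uu$-weighted $L^2$ splitting. The equation $\pi_{K_0^\perp}F(g\cdot\varphi_0 + \psi, t)=0$ is uniformly elliptic with invertible $\psi$-linearization $L_0\vert_S$, so the Banach-space IFT produces a unique smooth $\psi = \psi(g,t)\in S$ with $\psi(\cdot, 0)\equiv 0$ solving it. The full equation then reduces to the finite-dimensional problem
\[
\Psi(g,t) := \pi_{K_0}\bigl(F(g\cdot\varphi_0 + \psi(g,t), t)\bigr) = 0, \qquad \Psi(g,0)\equiv 0,
\]
in the unknown $g = g_t \in G$.

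\emph{Solving the finite-dimensional problem.} A weighted integration by parts, together with the weighted Chen--Tian decomposition \eqref{C-T}, identifies the map $g\mapsto \pi_{K_0}(N(g\cdot\varphi_0))$ with the differential along $G$ of a twisted weighted Mabuchi functional of the form $\mathcal M_{\uu,\vv} - t\,\mathcal I$, where $\mathcal I$ is a primitive of $N$ constructed as in \Cref{Lem-E-theta}. This functional is convex along complexified $G$-orbits by \Cref{Mab-conv}, so a moment-map/coercivity argument on $G$ yields a critical point $g_0 \in G$ at which $\pi_{K_0}N(g_0\cdot\varphi_0) = 0$; moreover the Hessian at $g_0$ is non-degenerate on $\mathfrak g/\mathfrak g_{\varphi_0}$ by strict convexity in the direction transverse to the stabilizer. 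A finite-dimensional IFT applied jet-by-jet in $t$ then produces the desired smooth branch $g_t$ with $g_{t=0}=g_0$, and setting $\phi_t := g_t\cdot\varphi_0 + \psi(g_t, t)$ gives a H\"older family which elliptic bootstrapping promotes to a smooth function of $(t,x)$.

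\emph{Main obstacle.} The principal difficulty is the final step: establishing the non-degenerate critical point $g_0$ for the twisted weighted Mabuchi functional restricted to $G$, which is the weighted analogue of the Matsushima--Lichnerowicz decomposition at a $(\uu,\vv)$-extremal metric. In the unweighted case this is classical; here it requires carefully tracking how the weight $\uu$ interacts with holomorphy potentials for $\mathfrak{g}$ under the $\uu$-weighted pairing, but modulo this the scheme parallels the Chen--Paun--Zeng deformation argument.
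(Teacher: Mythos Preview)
Your overall strategy matches the paper's: a Lyapunov--Schmidt reduction in the spirit of Chen--Paun--Zeng, with the $G$-action absorbing the kernel of the linearized operator. Two points, however, need correcting.

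First, the choice of $g_0$. You propose to locate $g_0$ as a critical point of the twisted functional $\mathcal M_{\uu,\vv}-t\,\mathcal I$ restricted to the $G$-orbit, invoking \Cref{Mab-conv}. But at $t=0$ this is $\mathcal M_{\uu,\vv}^{\rm rel}$, which is \emph{constant} on the orbit of an extremal metric (every orbit point is extremal), so it cannot select $g_0$. The paper instead minimizes only the primitive $\mathcal E^\omega_\uu$ of $N$ on the orbit: this is proper on $\mathcal O=G/K$ because it is strictly convex along the smooth geodesic rays generated by $J\mathfrak k$ (\Cref{Cor-E-theta-strict}), and its minimizer $\phi_0$ satisfies precisely the orthogonality $\langle N(\phi_0)/\vv(m_{\phi_0}),f\rangle_{\vv,\phi_0}=0$ for all Killing potentials $f\in\mathfrak k_{\phi_0}$ that you need.

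Second, the finite-dimensional step. You record $\Psi(g,0)\equiv 0$ and then invoke ``a finite-dimensional IFT applied jet-by-jet in $t$''. This is the Bando--Mabuchi bifurcation and must be made explicit: one passes to $\tilde\Psi:=\Psi/t$ (extended by $\partial_t\Psi$ at $t=0$) and must show that its linearization $Q_{\uu,\vv}$ in the kernel direction at $(g_0,0)$ is invertible. In the paper this is a genuine computation: differentiating the reduced equation in $t$ produces an auxiliary identity $\mathcal D^*_{\phi_0}\uu(m_{\phi_0})\mathcal D_{\phi_0}\dot\phi(f_0)+N(\phi_0)=0$, and only after using it (together with the variational formula for $\mathcal D_{\phi_\varepsilon}\dot f$) does one obtain
\[
\langle Q_{\uu,\vv}(\dot f),\dot f\rangle_{\vv,\phi_0}=-\int_X\uu(m_{\phi_0})(\omega,d\dot f\wedge d^c\dot f)_{\phi_0}\,\omega_{\phi_0}^{[n]},
\]
which is negative definite. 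Your appeal to ``strict convexity transverse to the stabilizer'' is morally this, but the identification of $Q_{\uu,\vv}$ with the Hessian of $\mathcal E^\omega_\uu$ along the orbit is the substance of the argument and is not automatic.

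A minor correction: the relevant inner product is the $\vv$-weighted one $\langle f,h\rangle_{\vv,\phi_0}=\int_X fh\,\vv(m_{\phi_0})\omega_{\phi_0}^{[n]}$ (one first divides $F$ by $\vv(m_\phi)$), and the self-adjoint operator is $\mathbb L_{\uu,\vv}=\vv(m_{\phi_0})^{-1}\mathcal D^*_{\phi_0}\uu(m_{\phi_0})\mathcal D_{\phi_0}$, whose $\T$-invariant kernel is the space $\mathfrak k_{\phi_0}$ of Killing potentials for $K={\rm Isom}^{\T}_0(X,\omega_{\phi_0})\cap G$, not all of $\mathfrak g$.
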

The proof follows from an application of the inverse function theorem as in \cite{CPZ}. To this end we need to find the K\"ahler metric $\omega_{\phi_0}$ in the $G$-orbit of the $(\uu,\ell_{\rm ext}\cdot\vv)$ metric $\omega_{\varphi_0}$, as stated in the theorem. 

Let $\hat{\mathcal{K}}(X,\omega)^{\T}$ denote the space of $\T$-invariant K\"ahler potentials $\phi\in\mathcal{K}(X,\omega)^{\T}$ normalized by $\int_X\phi\vv(m_\omega)\omega^{[n]}=0$, and $K:={\rm Isom}^{\T}_0(X,\omega_{\varphi_0})\cap G$ the connected component of identity of the group of Hamiltonian isometries of $(X,\omega_{\varphi_0})$ commuting with $\T$. As we suppose by definition that $\Scal_\uu(\varphi_0)/\vv(m_{\varphi_0}) =\ell_{\rm ext}(m_{\varphi_0})$ is the Killing potential of a vector field in $\tor$, by \cite[Corollary B.1]{lahdili3}  $K$ is a maximal connected compact subgroup of $G$. Following \cite{CPZ}, we consider the map 
\begin{equation*}
\Psi^{\omega}:\mathcal{O}\to\hat{\mathcal{K}}(X,\omega)^{\T},
\end{equation*}
defined on the homogeneous manifold $\mathcal{O}:=G/K$  by $\Psi^{\omega}(\sigma):=\phi_\sigma$, where $\phi_\sigma\in\hat{\mathcal{K}}(X,\omega)^{\T}$ is the unique potential such that 
\begin{equation}\label{phi-sigma}
\sigma^*\omega=\omega+dd^c\phi_\sigma.
\end{equation}
In the case when $\omega$ is $(\uu,\vv)$-extremal metric, \cite[Theorem B.1]{lahdili3} yields the following result, which is a straightforward generalization of \cite[Proposition 4.3]{CPZ} describing $(T_\sigma\Psi^\omega)(T_\sigma\mathcal{O})$ the image of the differential of $\Psi^\omega$ in $\sigma\in\mathcal{O}$.
\begin{lem}\cite[Proposition 4.3]{CPZ}\label{lem-TPsi}
If $\omega$ is a $(\uu,\vv)$-extremal metric, then the image $(T_\sigma\Psi^\omega)(T_\sigma\mathcal{O})$ is given by real holomorphic vector fields 
$$\xi=J\grad_{\phi_\sigma}(f)\in \mathfrak{k},$$
where $\mathfrak{k}:={\rm Lie}(K)$, $\phi_\sigma=\Psi^\omega(\sigma)$ and $f\in C^\infty(X,\mathbb{R})$.
\end{lem}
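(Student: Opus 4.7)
The plan is to compute $T_\sigma\Psi^\omega$ directly from the defining relation $\sigma^{*}\omega = \omega + dd^c\phi_\sigma$ and then identify the image by invoking the weighted Matsushima--Lichnerowicz splitting made available by the $(\uu,\vv)$-extremality hypothesis.

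First I would represent an arbitrary tangent vector in $T_\sigma\mathcal O$ by a one-parameter family $\sigma_t := \sigma\circ\exp(t\xi)$ with $\xi$ a real holomorphic $\T$-invariant vector field in $\mathfrak g$. The chain identities $\sigma_t^*\omega = \exp(t\xi)^*(\sigma^*\omega) = \exp(t\xi)^*\omega_{\phi_\sigma}$ together with $\sigma_t^*\omega = \omega + dd^c\phi_{\sigma_t}$ would then, upon differentiation in $t$ at $t=0$, produce the key formula
\[
dd^c\bigl(T_\sigma\Psi^\omega(\xi)\bigr) \;=\; \mathcal L_\xi\omega_{\phi_\sigma},
\]
which determines $T_\sigma\Psi^\omega(\xi)$ up to an additive constant that is in turn fixed by the normalization $\int_X \phi\,\vv(m_\omega)\omega^{[n]} = 0$ cutting out $\hat{\mathcal K}(X,\omega)^{\T}$.

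Next I would exploit the unique Kähler decomposition of a real holomorphic vector field with zeros, taken relative to $\omega_{\phi_\sigma}$, namely $\xi = \grad_{\phi_\sigma}(\psi) + J\grad_{\phi_\sigma}(\varphi)$ for real $\T$-invariant functions $\psi,\varphi$. A direct computation of the interior products $\iota_{\grad_{\phi_\sigma}(\psi)}\omega_{\phi_\sigma}$ and $\iota_{J\grad_{\phi_\sigma}(\varphi)}\omega_{\phi_\sigma}$ (the latter being exact and contributing nothing after $d$) yields $\mathcal L_\xi\omega_{\phi_\sigma} = dd^c\psi$, so $T_\sigma\Psi^\omega(\xi) = \psi$ after the prescribed normalization is enforced.

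The main hurdle, and the step where the extremal hypothesis enters, is the identification of the image. Since $\sigma\in G$ commutes with $\T$, the pulled-back metric $\omega_{\phi_\sigma} = \sigma^*\omega$ is again a $\T$-invariant $(\uu,\vv)$-extremal K\"ahler metric, so I can invoke the weighted Matsushima--Lichnerowicz splitting of \cite[Theorem B.1]{lahdili3}, which furnishes $\mathfrak g = \mathfrak k \oplus J\mathfrak k$ into Killing vector fields (for $\omega_{\phi_\sigma}$) and their $J$-images. Hence $T_\sigma\mathcal O = \mathfrak g/\mathfrak k$ is canonically identified with $J\mathfrak k$, and any class can be represented by $\xi = J\eta$ with $\eta\in\mathfrak k$. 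Writing $\eta = J\grad_{\phi_\sigma}(f)$ in terms of its (unique up to constant) $\omega_{\phi_\sigma}$-momentum potential $f$, we find $\xi = -\grad_{\phi_\sigma}(f)$ and so $\psi = -f$. After normalization this shows that the image of $T_\sigma\Psi^\omega$ is exactly the space of smooth real $\T$-invariant functions $f$ whose Hamiltonian $J\grad_{\phi_\sigma}(f)$ lies in $\mathfrak k$, as claimed. Everything except the Matsushima-type splitting reduces to direct differentiation and elementary Kähler linear algebra; the $(\uu,\vv)$-extremality hypothesis enters only to guarantee that weighted splitting.
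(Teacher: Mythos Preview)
Your proposal is correct and follows precisely the approach the paper indicates: the paper does not give a proof but simply cites \cite[Proposition~4.3]{CPZ} and remarks that the generalization goes through once one replaces the classical Matsushima--Lichnerowicz decomposition by the weighted version \cite[Theorem~B.1]{lahdili3}, which is exactly what you do. One small point worth making explicit: the Matsushima splitting you invoke is for $\omega_{\phi_\sigma}=\sigma^*\omega$, so the $\mathfrak{k}$ appearing in your decomposition is a priori the Lie algebra of Killing fields for $\omega_{\phi_\sigma}$, i.e.\ the $\mathrm{Ad}_{\sigma}$-conjugate of the original $\mathfrak{k}$; the lemma's statement (and the paper) are slightly informal on this identification, but it is harmless since the two are canonically isomorphic via $\sigma_*$.
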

By a result due to Mabuchi \cite{Mabuchi2}, any real holomorphic vector field $\xi\in (T_\sigma\Psi^\omega)(T_\sigma\mathcal{O})$, gives rise to a smooth geodesic ray $(\phi_t)_{t\in\mathbb{R}}\in\mathcal{K}(X,\omega)^{\T}$, defined by $\phi_t:=\Psi^\omega(\exp(t\xi))$. Using, strict convexity of the functional $\mathcal{E}^{\theta}_{\uu}$ along weak geodesics (see \Cref{Cor-E-theta-strict}) and the fact that $\exp:T\mathcal{O}\to\mathcal{O}$ is onto, we obtain 
\begin{lem}\cite[Lemma 2]{CPZ}\label{lem-min}
If $\omega$ is a $(\uu,\vv)$-extremal metric, then for any $\T$-invariant K\"ahler form $\theta$ on $X$, the functional $\mathcal{E}^{\theta}_{\uu}\circ\Psi^\omega:\mathcal{O}\to \mathbb{R}$ is proper. In particular $\mathcal{E}^{\theta}_{\uu}$ admits a unique minimum point on the orbit $\Psi^\omega(\mathcal{O})$.
\end{lem}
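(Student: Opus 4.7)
The plan is to adapt the symmetric-space argument of Chen--P\u{a}un--Zeng \cite[Lemma~2]{CPZ} to the weighted setting, combining the strict convexity of $\mathcal{E}^\theta_\uu$ along smooth geodesics with the reductive structure of $G$. First, by \Cref{lem-TPsi}, since $\omega$ is $(\uu,\vv)$-extremal, the image $(T_\sigma\Psi^\omega)(T_\sigma\mathcal{O})$ consists of real holomorphic vector fields $\xi=J\grad_{\phi_\sigma}(f)\in\mathfrak{k}$, and by Mabuchi's theorem \cite{Mabuchi2} each such $\xi$ generates a smooth geodesic ray $\phi_t:=\Psi^\omega(\exp(t\xi)\cdot\sigma)$ in $\mathcal{K}(X,\omega)^{\T}$. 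Since $K$ is a maximal connected compact subgroup of the reductive complex group $G$, the orbit $\mathcal{O}=G/K$ is a Riemannian symmetric space, and every point is joined to a fixed base point by a one-parameter geodesic of this form (via the Cartan decomposition of $G$). Applying \Cref{Cor-E-theta-strict} to the K\"ahler form $\theta$, the composition $t\mapsto\mathcal{E}^\theta_\uu(\phi_t)$ is strictly convex along each such ray, so $\mathcal{E}^\theta_\uu\circ\Psi^\omega$ is strictly geodesically convex on $\mathcal{O}$.

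Next, to upgrade strict convexity to properness, I would show that $\mathcal{E}^\theta_\uu(\phi_t)\to+\infty$ as $|t|\to\infty$ along every such ray with $\xi\neq 0$. Using \Cref{ddc-E} and the change of variables induced by the $\T$-equivariant automorphism flow $\sigma_t:=\exp(t\xi)$ (which commutes with $\T$ since $G$ is the centralizer of $\T$ in $\Autred$), the second derivative
\[
\frac{d^2}{dt^2}\mathcal{E}^\theta_\uu(\phi_t)=\int_X(\theta,d\dot\phi_t\wedge d^c\dot\phi_t)_{\omega_{\phi_t}}\uu(m_{\phi_t})\omega^{[n]}_{\phi_t}
\]
transforms into an integral pairing the pullback K\"ahler form $(\sigma_t^{-1})^*\theta$ with $df_0^\xi\wedge d^c f_0^\xi\wedge\omega^{[n-2]}$ weighted by $\uu(m_\omega)$, where $f_0^\xi$ is the $\omega$-Hamiltonian potential of $\xi$. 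The K\"ahler positivity of $\theta$, the positivity of $\uu$, and a compactness argument over the unit sphere of tangent directions $\xi$ together yield a uniform positive lower bound for the second derivative along the ray, hence at least quadratic growth of $\mathcal{E}^\theta_\uu(\phi_t)$ in $t$.

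Properness and continuity of $\mathcal{E}^\theta_\uu\circ\Psi^\omega$ on $\mathcal{O}$ then ensure existence of a minimum on $\Psi^\omega(\mathcal{O})$. If two distinct minimizers existed, the function $t\mapsto\mathcal{E}^\theta_\uu(\phi_t)$ along the geodesic joining them would take the same (minimum) value at both endpoints, contradicting strict convexity; hence the minimizer is unique. The main technical obstacle is the uniform lower bound on the second derivative along a geodesic ray, i.e.\ the control of the pullback $(\sigma_t^{-1})^*\theta$ for large $|t|$: this requires a careful analysis of one-parameter subgroups of the reductive group $G$, using in an essential way the Cartan decomposition and the compactness of the relevant sphere of directions in the tangent space.
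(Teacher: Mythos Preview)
Your approach is exactly that of the paper, which does not give a detailed proof but only references \cite{CPZ} and indicates, in the sentence immediately preceding the lemma, that one should combine Mabuchi's observation that one-parameter subgroups of $G$ trace out smooth geodesics in $\mathcal{K}(X,\omega)^{\T}$, the strict convexity from \Cref{Cor-E-theta-strict}, and the surjectivity of $\exp:T\mathcal{O}\to\mathcal{O}$. You supply considerably more detail than the paper and you correctly isolate the only delicate point: strict convexity along each geodesic line does not by itself force $\mathcal{E}^\theta_\uu(\phi_t)\to+\infty$ at both ends (think of $t\mapsto e^{-t}$ on $\mathbb{R}$).

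One comment on your proposed fix: a compactness argument over the unit sphere of directions $\xi$ does not immediately yield a uniform lower bound on the second derivative, since after your change of variables the second derivative still depends on $t$ through the pullback $(\sigma_t^{-1})^*\theta$, and this form has no obvious uniform-in-$t$ positivity on the support of $df\wedge d^c f$. You are right that this is where the work lies. Note, however, that because $\uu$ is bounded between two positive constants on $\PP_\alpha$, the weighted second derivative is uniformly comparable to the unweighted one ($\uu\equiv 1$), so whatever growth estimate \cite[Lemma~2]{CPZ} establishes in the classical case carries over verbatim; this is presumably why the paper adds nothing beyond the citation.
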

Now we are in position to give a sketch for the proof of \Cref{Thm-Def}, which is not materially different than \cite[Theorem 1.2]{CPZ}.

\begin{proof}[Proof of \Cref{Thm-Def}]
Since $\omega_{\varphi_0}$ is a $(\uu,\vv)$-extremal metric, we can take $\phi_0\in\Psi^{\omega_{\varphi_0}}(\mathcal{O})$ be the unique minimiser of $\mathcal{E}^{\omega}_{\uu}$ (we take $\theta=\omega$ in \Cref{lem-min}). Using \Cref{lem-TPsi} and \eqref{E-theta}, we have
\begin{equation}\label{orth}
\left\langle\vv(m_{\phi_0})^{-1}(\uu(m_{\phi_0})\Lambda_{\phi_0}\omega+\langle(d\uu)(m_{\phi_0}),m_\omega\rangle),f\right\rangle_{\vv,\phi_0}=0,
\end{equation}
for any $f\in \mathfrak{k}_{\phi_0}$ in the space of $\omega_{\phi_0}$-Killing potentials of elements of ${\rm Lie}(K):=\mathfrak{k}$, where $\langle\cdot,\cdot\rangle_{\vv,\phi_0}$ is the weighted inner product 
\begin{equation}\label{prod}
\langle f,h\rangle_{\vv,\phi_0}=\int_X fh\vv(m_{\phi_0})\omega_{\phi_0}^{[n]}.
\end{equation}
Let $\mathcal{K}^{2,k+4}(X,\omega)^{\T}$ be the open set of $\T$-invariant $\omega$-K\"ahler potentials with ${\rm L}^{2,k+4}$ regularity. We consider the map:
\begin{equation*}
\mathcal{F}_{\uu,\vv}:\mathcal{K}^{2,k+4}(X,\omega)^{\T}\times [0,1]\to{\rm L}^{2,k}(X,\mathbb{R})^{\T}\times [0,1],
\end{equation*}
defined by
\begin{align}
\begin{split}\label{F-u-v}
&\mathcal{F}_{\uu,\vv}(\phi,t):=(F_{\uu,\vv}(\phi,t),t),\\
&F_{\uu,\vv}(\phi,t):=\frac{\Scal_{\uu}(\phi)-t\big(\uu(m_{\phi})\Lambda_{\phi}(\omega)+\big\langle(d\uu)(m_{\phi}),m_\omega\big\rangle\big)}{\vv(m_{\phi})}-\ell_{\rm ext}(m_{\phi}).
\end{split}
\end{align}
We have $\mathcal{F}_{\uu,\vv}(\phi_0,0)=0$. Using \cite[Lemma B.1]{lahdili3}, we can calculate the differential at $(\phi_0,0)$ of $\mathcal{F}_{\uu,\vv}$ is given by 
\begin{align*}
T_{(\phi_0,0)}\mathcal{F}_{\uu,\vv}:&{\rm L}^{2,k+4}(X,\mathbb{R})^{\T}\times \mathbb{R}\to{\rm L}^{2,k}(X,\mathbb{R})^{\T}\times \mathbb{R},\\
(T_{(\phi_0,0)}\mathcal{F}_{\uu,\vv})(\dot{\phi},\zeta)=&\Big((T_{(\phi_0,0)}F_{\uu,\vv})(\dot{\phi},\zeta),\zeta\Big),\\
(T_{(\phi_0,0)}F_{\uu,\vv})(\dot{\phi},\zeta)=&-\frac{\mathcal{D}_{\phi_0}^{*}\uu(m_{\phi_0})\mathcal{D}_{\phi_0}\dot{\phi}}{\vv(m_{\phi_0})}-\zeta\left[\frac{\uu(m_{\phi_0})\Lambda_{\phi_0}\omega+\big\langle(d\uu)(m_{\phi_0}),m_\omega\big\rangle}{\vv(m_{\phi_0})}\right],
\end{align*}
where $\mathcal{D}_{\phi_0}\dot{\phi}:=\sqrt{2}(\nabla^{\phi_0} d\dot{\phi})^{-}$ is the $J$-anti-invariant part of the tensor $\nabla^{\phi_0} d\dot{\phi}$, with $\nabla^{\phi_0}$ the $g_{\phi_0}$-Levi-Civita connection, and $\mathcal{D}_{\phi_0}^{*}$ is the formal adjoint of $\mathcal{D}_{\phi_0}$. 

Notice that $\mathbb{L}_{\uu,\vv}:=(\vv(m_{\phi_0}))^{-1}\mathcal{D}_{\phi_0}^{*}\uu(m_{\phi_0})\mathcal{D}_{\phi_0}$ is a fourth order $\langle\cdot,\cdot\rangle_{\vv,\phi_0}$-self adjoint $\T$-invariant elliptic linear operator. By standard elliptic theory we have the following decomposition $\langle\cdot,\cdot\rangle_{\vv,\phi_0}$-orthogonal decomposition 
\begin{equation}\label{decomp}
{\rm L}^{2,k}(X,\R)^{\T}={\rm Ker}(\mathbb{L}_{\uu,\vv})\oplus{\rm Im}(\mathbb{L}_{\uu,\vv}).
\end{equation}
We have ${\rm Ker}(\mathbb{L}_{\uu,\vv})=\mathfrak{k}_{\phi_0}$ since $K$ is a maximal compact subgroup of $G$, and ${\rm Im}(\mathbb{L}_{\uu,\vv})={\rm L}^{2,k}_{\perp}(X,\mathbb{R})^{\T}$ . Using \eqref{decomp}, it's clear that the linearization is neither injective nor surjective. Let $\Pi_{\vv,\phi_0}$ the $\langle\cdot,\cdot\rangle_{\vv,\phi_0}$-orthogonal projection on $\mathfrak{k}_{\phi_0}$. 

We consider the following modification of the map $\mathcal{F}_{\uu,\vv}$ 
\begin{align*}
\tilde{\mathcal{F}}_{\uu,\vv}:\mathcal{K}^{2,k+4}(X,\omega)^{\T}\times [0,1]\to\mathfrak{k}_{\phi_0}\times{\rm L}^{2,k}_{\perp}(X,\mathbb{R})^{\T}\times [0,1]
\end{align*}
defined by 
\begin{equation*}
\tilde{\mathcal{F}}_{\uu,\vv}(f,\psi,t):=(f,(I-\Pi_{\vv,\phi_0})\circ F_{\uu,\vv}(f+\psi,t),t).
\end{equation*}
where $f\in\mathfrak{k}_{\phi_0}$ and $\psi\in{\rm L}^{2,k}_{\perp}(X,\mathbb{R})^{\T}$ such that $\phi:=f+\psi\in\mathcal{K}^{2,k+4}(X,\omega)^{\T}$. Let $\phi_0:=f_0+\psi_0$ be the orthogonal decomposition of $\phi_0$ in \eqref{decomp}. The derivative of $\tilde{\mathcal{F}}_{\uu,\vv}$ in $(f_0,\psi_0,0)$, is given by
\begin{equation*}
(T_{(f_0,\psi_0,0)}\tilde{\mathcal{F}}_{\uu,\vv})(f,\dot{\psi},\zeta)=\Big(f,-\mathbb{L}_{\uu,\vv}(\dot{\psi})-\zeta\vv(m_{\phi_0})^{-1}\big(\uu(m_{\phi_0})\Lambda_{\phi_0}\omega+\big\langle(d\uu)(m_{\phi_0}),m_\omega\big\rangle\big),\zeta\Big).
\end{equation*}
The decomposition \eqref{decomp} and the equation \eqref{orth} show that $T_{(f_0,\psi_0,0)}\tilde{\mathcal{F}}_{\uu,\vv}$ is bijective. By the inverse function theorem we obtain a path 
\begin{equation}\label{phi-t-f}
\phi(f,t):=f+\psi(f,t)\in\mathcal{K}^{2,k+4}(X,\omega)^{\T},
\end{equation}
for $0<t<\epsilon$ and $f\in\mathfrak{k}_{\phi_0}$, such that
\begin{equation}\label{sol-pi-2}
(I-\Pi_{\vv,\phi_0})\circ F_{\uu,\vv}(\phi(f,t),t)=0
\end{equation}
for $\parallel f-f_0\parallel_{{\rm L}^{2,k+4}}<\epsilon$. 

Now we introduce the functional $\mathcal{G}_{\uu,\vv}:\mathfrak{k}_{\phi_0}\times(0,\epsilon)\to \mathfrak{k}_{\phi_0}$, defined by
\begin{equation*}
\mathcal{G}_{\uu,\vv}(f,t):=\Pi_{\vv,\phi_0}\circ F_{\uu,\vv}(\phi(f,t),t),
\end{equation*}
where $\phi(f,t)$ is given by \eqref{phi-t-f}. To complete the proof we need to solve the equation
\begin{equation*}
\mathcal{G}_{\uu,\vv}(f(t),t)=0,
\end{equation*}
for $t\in(0,\epsilon)$ and $f(t)\in\mathfrak{k}_{\phi_0}$. However, its not possible to apply the implicit function theorem. Indeed,
\begin{equation*}
\left.\frac{\partial \mathcal{G}_{\uu,\vv}}{\partial f}\right|_{(f_0,0)}(\dot{f})=\Pi_{\vv,\phi_0}\circ \left.\frac{\partial F_{\uu,\vv}}{\partial f}\right|_{(f_0,0)}\Big(\dot{f}+\left.\frac{\partial \psi}{\partial f}\right|_{(f_0,0)}(\dot{f})\Big)=0
\end{equation*}
since, by differentiating \eqref{sol-pi-2} with respect to $f$, we get 
\begin{equation}\label{d-phi-d-f}
\left.\frac{\partial \psi}{\partial f}\right|_{(f_0,0)}(\dot{f})=0.
\end{equation}
To solve this problem, one can consider the map \cite{CPZ}:
\begin{equation*}
\tilde{\mathcal{G}}_{\uu,\vv}(f,t):=\begin{cases}
\frac{\mathcal{G}_{\uu,\vv}(f,t)}{t}\quad&\text{if }t\neq 0,\\
\left.\frac{\partial\mathcal{G}_{\uu,\vv}}{\partial t}\right|_{(f,0)}\quad&\text{if }t=0.
\end{cases}
\end{equation*}
which is continuous on $\mathfrak{k}_{\phi_0}\times[0,1]$. We want to apply the implicit function theorem to solve the equation 
$$\tilde{\mathcal{G}}_{\uu,\vv}(f(t),t)=0.$$
So we have to check that the derivative
\begin{equation}\label{A-map}
Q_{\uu,\vv}:=\left.\frac{\partial \tilde{\mathcal{G}}_{\uu,\vv}}{\partial f}\right|_{(f_0,0)},
\end{equation}
is invertible. To simplify notations we denote the derivative with respect to $t$ of 
\eqref{phi-t-f} by
$$\dot{\phi}(f):=\left.\frac{\partial\phi}{\partial t}\right|_{(f,0)}.$$
By differentiating \eqref{sol-pi-2} with respect to $t$, we get
\begin{equation}\label{Lic-dot-phi}
(\mathcal{D}_{\phi_0}^{*}\uu(m_{\phi_0})\mathcal{D}_{\phi_0})(\dot{\phi}(f_0))+\uu(m_{\phi_0})\Lambda_{\phi_0}\omega+\big\langle(d\uu)(m_{\phi_0}),m_\omega\big\rangle=0.
\end{equation}
A straightforward calculation yields
\begin{align*}
\tilde{\mathcal{G}}_{\uu,\vv}(f,0)=&-\Pi_{\vv,\phi_0}\circ G_{\uu,\vv}(f),\\
G_{\uu,\vv}(f):=&\frac{\mathcal{D}_{\phi}^{*}\uu(m_{\phi})\mathcal{D}_{\phi}(\dot{\phi}(f))+\uu(m_{\phi})\Lambda_{\phi}\omega+\big\langle(d\uu)(m_{\phi}),m_\omega\big\rangle}{\vv(m_{\phi})},
\end{align*}
where $\phi:=\phi(f,0)$. For $\dot{f}\in\mathfrak{k}_{\phi_0}$ we denote $f_\varepsilon:=f_0+\varepsilon\dot{f}$ and $\phi_\varepsilon=f_\epsilon+\psi(f_\varepsilon,t)$, we then have
\begin{align}
\begin{split}\label{cxc}
&\langle Q_{\uu,\vv}(\dot{f}),\dot{f}\rangle_{\vv,\phi_0}:=\int_X\left(\left.\frac{d}{d\varepsilon}\right|_{\varepsilon=0}\tilde{\mathcal{G}}_{\uu,\vv}(f_\varepsilon,0)\right)\dot{f}\vv(m_{\phi_0})\omega_{\phi_0}^{[n]}\\
=&\left.\frac{d}{d\varepsilon}\right|_{\varepsilon=0}\int_X\Pi_{\vv,\phi_0}[G_{\uu,\vv}(f_{\varepsilon})]\dot{f}\vv(m_{\phi_0})\omega_{\phi_0}^{[n]}\\
=&-\left.\frac{d}{d\varepsilon}\right|_{\varepsilon=0}\int_X G_{\uu,\vv}(f_\varepsilon)\dot{f}\vv(m_{\phi_\varepsilon})\omega_{\phi_\varepsilon}^{[n]}-\int_X G_{\uu,\vv}(f_0)\dot{f}\left.\frac{d}{d\varepsilon}\right|_{\varepsilon=0}(\vv(m_{\phi_\varepsilon})\omega_{\phi_\varepsilon}^{[n]})\\
=&-\left.\frac{d}{d\varepsilon}\right|_{\varepsilon=0}\int_X G_{\uu,\vv}(f_\varepsilon)\dot{f}\vv(m_{\phi_\varepsilon})\omega_{\phi_\varepsilon}^{[n]} \quad\text{ (using \eqref{Lic-dot-phi} }G_{\uu,\vv}(f_0)=0)\\
=&-\left.\frac{d}{d\varepsilon}\right|_{\varepsilon=0}\int_X\big(\uu(m_{\phi_\varepsilon})\big(\mathcal{D}_{\phi_\varepsilon}(\dot{\phi}(f_\varepsilon)),\mathcal{D}_{\phi_\varepsilon}\dot{f}\big)_{\phi_\varepsilon}+[\uu(m_{\phi_\varepsilon})\Lambda_{\phi_\varepsilon}\omega+\big\langle(d\uu)(m_{\phi_\varepsilon}),m_\omega\big\rangle]\dot{f}\big)\omega_{\phi_\varepsilon}^{[n]}.
\end{split}
\end{align}
Using the following variational formulas,
\begin{align*}
&\left.\frac{d}{d\varepsilon}\right|_{\varepsilon=0}\omega_{\phi_\varepsilon}^{[n]}=-\Delta_{\phi_0}(\dot{f})\omega_{\phi_0}^{[n]}\\
&\left.\frac{d}{d\varepsilon}\right|_{\varepsilon=0}\uu(m_{\phi_\varepsilon})=\sum_{i=1}^{\ell}\uu_{,i}(m_{\phi_0})(d^c\dot{f})(\xi_i),\\
&\left.\frac{d}{d\varepsilon}\right|_{\varepsilon=0}\langle(d\uu)(m_{\phi_\varepsilon}),m_\omega\rangle=\sum_{i,j=1}^{\ell}\uu_{,ij}(m_{\phi_0})(d^c\dot{f})(\xi_j)m^{\xi_i}_\omega,
\\
&\left.\frac{d}{d\varepsilon}\right|_{\varepsilon=0}\Lambda_{\phi_\varepsilon}\omega=-(dd^{c}\dot{f},\omega)_{\phi_0},\\
&\left.\frac{d}{d\varepsilon}\right|_{\varepsilon=0}\mathcal{D}_{\phi_\varepsilon}\dot{f}=-\mathcal{D}_{\phi_0}|d\dot{f}|_{\phi_0}^{2}\quad\text{(see \Cref{Rem} below)},
\end{align*} 
and the following calculation from the proof of \cite[Lemma 4]{lahdili3} 
\begin{align*}
\left.\frac{d}{d\varepsilon}\right|_{\varepsilon=0}\int_X[\uu(m_{\phi_\varepsilon})\Lambda_{\phi_\varepsilon}\omega+\big\langle(d\uu)(m_{\phi_\varepsilon}),&m_\omega\big\rangle]\dot{f}\omega_{\phi_\varepsilon}^{[n]}
=\int_{X}\uu(m_{\phi_0})(\omega,d\dot{f}\wedge d^c\dot{f})_{\phi_0}\omega_{\phi_0}^{[n]}\\
&-\int_X\left(\Lambda_{\phi_0}\omega+\big\langle(d\uu)(m_{\phi_0}),m_\omega\big\rangle\right)|d\dot{f}|_{\phi_0}^{2}\omega_{\phi_0}^{[n]},
\end{align*}
we compute from \eqref{cxc}:
\begin{align*}
\langle Q_{\uu,\vv}(\dot{f}),\dot{f}\rangle_{\vv,\phi_0}=&\int_X(\mathcal{D}_{\phi_0}^{*}\uu(m_{\phi_0})\mathcal{D}_{\phi_0})(\dot{\phi}(f_0))|d\dot{f}|_{\phi_0}^{2}\omega_{\phi_0}^{[n]}\\
&-\int_{X}\uu(m_{\phi_0})(\omega,d\dot{f}\wedge d^c\dot{f})_{\phi_0}\omega_{\phi_0}^{[n]}\\
&+\int_X\left(\Lambda_{\phi_0}\omega+\big\langle(d\uu)(m_{\phi_0}),m_\omega\big\rangle\right)|d\dot{f}|_{\phi_0}^{2}\omega_{\phi_0}^{[n]}\\
=&-\int_{X}\uu(m_{\phi_0})(\omega,d\dot{f}\wedge d^c\dot{f})_{\phi_0}\omega_{\phi_0}^{[n]},
\end{align*} 
where we used \eqref{Lic-dot-phi} for the second equality. It follows that $Q_{\uu,\vv}$ is bejective on $\mathfrak{k}_{\phi_0}$. Therefore, by the implicit function theorem, there exist a path $(f(t))_{t\in(0,\epsilon)}\in\mathfrak{k}_{\phi_0}$, $f(0)=f_0$ such that $\mathcal{G}_{\uu,\vv}(f(t),t)=0$. From \eqref{sol-pi-2}, we obtain
 \begin{equation*}
 F_{\uu,\vv}(\phi(f(t),t),t)=0,
 \end{equation*}
 for any $t\in (0,\epsilon)$, which completes the proof.
\end{proof}
\begin{lem}\label{Rem}
We have
\begin{equation}\label{exp}
\left.\frac{d}{d\varepsilon}\right|_{\varepsilon=0}\mathcal{D}_{\phi_\varepsilon}\dot{f}=-\mathcal{D}_{\phi_0}|d\dot{f}|_{\phi_0}^{2}.
\end{equation}
\end{lem}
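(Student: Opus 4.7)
Since $\dot f \in \mathfrak{k}_{\phi_0}$, the function $\dot f$ is a Killing potential for $\omega_{\phi_0}$; equivalently $\mathcal{D}_{\phi_0}\dot f = 0$, and the vector field $\xi := J\,\grad_{\phi_0}\dot f$ is a real holomorphic Killing vector field of $(X,\omega_{\phi_0})$. Because $J$ preserves the class of real holomorphic vector fields on a K\"ahler manifold, $\grad_{\phi_0}\dot f = -J\xi$ is also real holomorphic; it commutes with $\T$ because $\dot f$ is $\T$-invariant. Its flow $F_\varepsilon := \exp(\varepsilon\,\grad_{\phi_0}\dot f)$ is therefore a one-parameter group of $\T$-equivariant biholomorphisms of $X$.

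The map $\phi \mapsto \mathcal{D}_\phi\dot f$ is smooth, so the left-hand side of \eqref{exp} depends only on $\left.\tfrac{d\phi_\varepsilon}{d\varepsilon}\right|_0$, which by \eqref{phi-t-f} and \eqref{d-phi-d-f} equals $\dot f$. I therefore replace $\phi_\varepsilon$ by the $\T$-invariant path $\tilde\phi_\varepsilon \in \mathcal{K}(X,\omega)^{\T}$ determined, up to an additive constant, by
\begin{equation*}
\omega + dd^c\tilde\phi_\varepsilon \;=\; F_\varepsilon^*\,\omega_{\phi_0}.
\end{equation*}
This is well defined since $F_\varepsilon$, being isotopic to the identity, preserves the class $\alpha$; and Cartan's formula together with the standard identity $\iota_{\grad_{\phi_0}\dot f}\,\omega_{\phi_0} = d^c\dot f$ gives
\begin{equation*}
\left.\tfrac{d}{d\varepsilon}\right|_0\omega_{\tilde\phi_\varepsilon} \;=\; \mathcal{L}_{\grad_{\phi_0}\dot f}\,\omega_{\phi_0} \;=\; dd^c\dot f,
\end{equation*}
so that $\dot{\tilde\phi}|_0 = \dot f$ up to an irrelevant additive constant (to which $\mathcal{D}$ is insensitive).

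The operator $\mathcal{D}$ is natural under biholomorphisms (since they preserve both $J$ and pull back the metric isometrically), so
\begin{equation*}
\mathcal{D}_{\tilde\phi_\varepsilon}\dot f \;=\; F_\varepsilon^*\,\mathcal{D}_{\phi_0}\!\bigl(F_{-\varepsilon}^*\dot f\bigr).
\end{equation*}
Differentiating at $\varepsilon = 0$ using $\left.\tfrac{d}{d\varepsilon}\right|_0 F_\varepsilon^* = \mathcal{L}_{\grad_{\phi_0}\dot f}$ and $\mathcal{D}_{\phi_0}\dot f = 0$ gives
\begin{equation*}
\left.\tfrac{d}{d\varepsilon}\right|_0\mathcal{D}_{\tilde\phi_\varepsilon}\dot f \;=\; -\mathcal{D}_{\phi_0}\!\bigl(\mathcal{L}_{\grad_{\phi_0}\dot f}\dot f\bigr) \;=\; -\mathcal{D}_{\phi_0}\!\bigl(g_{\phi_0}(\grad_{\phi_0}\dot f,\grad_{\phi_0}\dot f)\bigr) \;=\; -\mathcal{D}_{\phi_0}|d\dot f|^2_{\phi_0}.
\end{equation*}
The only conceptual subtlety is the observation that $\grad_{\phi_0}\dot f$ itself (and not merely its $J$-rotation) is a real holomorphic vector field, which is what makes the pull-back path $\tilde\phi_\varepsilon$ available; once this is in hand, naturality of $\mathcal{D}$ together with $\mathcal{D}_{\phi_0}\dot f = 0$ does the rest and no analytic difficulty arises.
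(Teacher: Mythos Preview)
Your proof is correct and takes a genuinely different route from the paper's. The paper differentiates $\mathcal{D}_{\phi_\varepsilon}\dot f$ directly, invoking a variation formula from Gauduchon's lecture notes that expresses $\left.\tfrac{d}{d\varepsilon}\right|_0\mathcal{D}_{\phi_\varepsilon}\dot f$ as $-\tfrac{\sqrt{2}}{2}\,\omega_{\phi_0}((\mathcal{L}_V J)\cdot,\cdot)$ with $V=\left.\tfrac{d}{d\varepsilon}\right|_0\grad_{\phi_\varepsilon}\dot f$; it then computes $V^{\flat}$ via Cartan's formula, using that $J\grad_{\phi_0}\dot f$ is real holomorphic. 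Your argument is more conceptual: you observe that the derivative depends only on $\dot\phi|_0=\dot f$, then replace $\phi_\varepsilon$ by the pull-back path along the flow of the real holomorphic vector field $\grad_{\phi_0}\dot f$, so that naturality of $\mathcal{D}$ under biholomorphisms together with $\mathcal{D}_{\phi_0}\dot f=0$ immediately gives the result. Both proofs ultimately hinge on the same facts (that $\dot f$ is a Killing potential and that the gradient is holomorphic), but your approach avoids the explicit variation formula and makes transparent why the answer is $\mathcal{D}_{\phi_0}$ applied to a single function. The paper's computation, on the other hand, does not require the freedom to change the path and would still go through if one insisted on working with the given $\phi_\varepsilon$.
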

\begin{proof}
Using \cite[Lemma 1.23.2]{Gauduchon}, and the fact that $\dot{f}$ is a Killing potential we obtain,
\begin{align}
\begin{split}\label{ccc}
\left.\frac{d}{d\varepsilon}\right|_{\varepsilon=0}\mathcal{D}_{\phi_\varepsilon}\dot{f}=&-\frac{\sqrt{2}}{2}\omega_{\phi_0}((\mathcal{L}_{V}J)\cdot,\cdot)\text{ where }V:=\left.\frac{d}{d\varepsilon}\right|_{\varepsilon=0}\grad_{\phi_{\varepsilon}}(\dot{f})\\
=&-\frac{\sqrt{2}}{2}(\nabla^{\phi_0}V^{\flat})^{-},
\end{split}
\end{align}
where the musical isomorphisme used in $V^{\flat}$ is with respect to the metric $\omega_{\phi_0}$. Using the equation $\omega_{\phi_\varepsilon}(\grad_{\phi_{\varepsilon}}(\dot{f}),\cdot)=Jd\dot{f}$, we obtain
\begin{align*}
V=&(dd^{c}\dot{f})(J\grad_{\phi_{0}}(\dot{f}),\cdot)\\
=&\mathcal{L}_{J\grad_{\phi_{0}}(\dot{f})}d^c\dot{f}-d((d^{c}\dot{
f})(J\grad_{\phi_{0}}(\dot{f})))\text{ by Cartan formula,}\\
=&0-d|d\dot{f}|_{\phi_0}^{2} \text{ since $J\grad_{\phi_{0}}(\dot{f})$ is real holomorphic}.
\end{align*}
Substituting the above expression of $V$ back into \eqref{ccc}, the expression \eqref{exp} follows.
\end{proof}
Now we are in position to proof \Cref{Thm-uniq}
\begin{proof}[Proof of \Cref{Thm-uniq}]
Using \Cref{Thm-Def}, the proof of \Cref{Thm-uniq} is very similar to \cite[Corollary 1.3]{CPZ}. We give the argument for the sake of clarity. Suppose that $\varphi_0, \tilde{\varphi}_0\in \mathcal{K}(X,\omega)^{\T}$, such that $\omega_{\varphi_0},\omega_{\tilde{\varphi}_0}$ are two $\T$-invariant $(\uu,\vv)$-extremal metrics in the K\"ahler class $\alpha$. Using \Cref{Thm-Def}, we get two paths $\phi:[0,\epsilon)\times X\to \mathbb{R}$ and $\tilde{\phi}:[0,\epsilon)\times X\to \mathbb{R}$ in $\mathcal{K}(X,\omega)^{\T}$ such that $\phi_0$ (resp. $\tilde{\phi}_0$) is in the $G$-orbit of $\varphi_0$ (resp. $\tilde{\varphi}_0$) and $\phi_t$ (resp. $\tilde{\phi}_t$) solves \eqref{eq-uu-vv-ext-twist}. Notice that $\phi_t$ and $\tilde{\phi}_t$ are critical points of the functional $\mathcal{M}_{(\uu,\ell_{\rm ext}\cdot\vv)}^{t\omega}:=\mathcal{M}_{\uu,\vv}^{\rm rel}+t\mathcal{E}^{\omega}_{\uu}$.  Indeed,   by \eqref{E-theta} and \eqref{Mabuchi}, we have
\begin{align*}
&\big(d\mathcal{M}_{(\uu,\ell_{\rm ext}\cdot\vv)}^{t\omega}\big)_{\phi}(\dot{\phi})=\\
&-\int_X\Big(\frac{\Scal_{\uu}(\phi)-t\big(\uu(m_\phi)\Lambda_{\omega_{\phi}}\omega+\big\langle(d\uu)(m_{\phi}),m_\omega\big\rangle\big)}{\vv(m_{\phi})}-\ell_{\rm ext}(m_\phi)\Big)\dot{\phi}\vv(m_\phi)\omega^{[n]}_\phi.
\end{align*}
By convexity of $\mathcal{M}_{\uu,\vv}^{\rm rel}=\mathcal{M}_{(\uu,\ell_{\rm ext}\vv)}$ along weak geodesics \Cref{Mab-conv}, and strict convexity of $\mathcal{E}^{\omega}_{\uu}$ along weak geodesics \Cref{Cor-E-theta-strict}, it follows that the functional $\mathcal{M}_{(\uu,\ell_{\rm ext}\cdot\vv)}^{t\omega}$ is strictly convex on weak geodesics. Thus, $\phi=\tilde{\phi}$ on $(0,\epsilon)\times X$. As $\epsilon\to 0$ we obtain $\varphi_0=f^{*}\tilde{\varphi}_0$, for some $f\in G$.
\end{proof}

\begin{rem}
By \cite[Corollary B.1]{lahdili3}, a $(\uu,\vv)$-extremal metric $\omega$ is always invariant under the action of a maximal torus $\Autred$. We can thus take in \Cref{Thm-uniq} $\mathbb{T}$ to be a maximal torus. In this case $G={\rm Aut}_{\rm red}^{\T}(X)^{\circ}=\T^c$ the complixified torus. Indeed~\footnote{Thanks to V.~Apostolov for this argument.}, by \cite[Theorem B1]{lahdili3} the group $G$ is a reductive Lie group (at the level of Lie algebras we have ${\rm Lie}(G)=\mathfrak{k}\oplus J\mathfrak{k}$ where $\mathfrak{k}$ is the Lie algebra of the connected compact group $K:={\rm Isom}^{\T}_0(X,\omega)\cap G$). As $\T \subset K$  is simultaneously central and maximal, it follows that $\mathfrak{k}=\tor$, and thus $K=\T$ and $G=\T^{c}$. Thus, when $\T$ is maximal, any two $(\uu,\vv)$-extremal metrics $\omega_1,\omega_2\in\alpha$, there exist $f\in{\T}^c$ such that $\omega_2=f^{*}\omega_1$.
\end{rem}

\end{document}